\documentclass[11pt,reqno]{amsart}
\usepackage{fullpage}
\usepackage{amsmath,amsthm,amssymb,mathscinet,mathtools,mathrsfs}
\usepackage{graphicx,xcolor,pgfplots}
\usepackage{verbatim}
\pgfplotsset{compat=1.17}
\usepackage{subcaption}
\captionsetup[subfigure]{labelfont=rm}

\usepackage[backref=section]{hyperref}
\hypersetup{
	colorlinks=true,
	linkcolor={red!60!black},
	citecolor={green!60!black},
	urlcolor={blue!60!black},
}

\usepackage[left=1.25in, right=1.25in, top=1.2in, bottom=1.25in]{geometry}

\usepackage{enumitem}
\newcommand{\customlabel}[2]{#2\def\@currentlabel{#2}\label{#1}}

\def\alabel{\upshape({\alph*})}

\def\ilabel{\upshape({\roman*})}

\linespread{1.1}


\usepackage{scalerel}
\usepackage{dsfont}
\usepackage{blindtext}
\usepackage{multicol}
\setlength{\columnsep}{1cm}

\newtheorem{theorem}{Theorem}[section]

\newtheorem{lemma}[theorem]{Lemma}

\newtheorem{proposition}[theorem]{Proposition}
\newtheorem{corollary}[theorem]{Corollary}

\newtheorem{observation}[theorem]{Observation}


\newtheorem{problem}[theorem]{Problem}

\numberwithin{equation}{section}

\def\eps{\varepsilon}
\newcommand{\ep}{\varepsilon}

\newcommand{\floor}[1]{\lfloor#1\rfloor}
\newcommand{\ceil}[1]{\lceil#1\rceil}
\newcommand{\bigfloor}[1]{\left\lfloor#1\right\rfloor}
\newcommand{\bigceil}[1]{\left\lceil#1\right\rceil}

\newcommand{\tbf}[1]{\textbf{#1}}

\makeatletter
\newcommand*{\rom}[1]{\expandafter\@slowromancap\romannumeral #1@}
\makeatother

\def\COMMENT#1{}
\let\COMMENT=\footnote

\usepackage{caption}

\tikzstyle{every node}=[circle, draw, fill=black!50, inner sep=0pt, minimum width=2pt]

\usetikzlibrary{patterns}
\usetikzlibrary{decorations.markings, decorations.pathreplacing}

\tikzset{middlearrow/.style={
  decoration={markings,
   mark= at position 0.5 with {\arrow[very thick]{#1}} ,
  },
  postaction={decorate}
 }
}

\usepackage{etoolbox}

\allowdisplaybreaks

\title[Arbitrary orientations of Hamilton cycles in directed graphs of large minimum degree]{Arbitrary orientations of Hamilton cycles in directed graphs of large minimum degree}

\author{Louis DeBiasio \and Andrew Treglown}

\thanks{
\\ \indent LD: Department of Mathematics, Miami University, Oxford, OH. Email: \texttt{debiasld@miamioh.edu}. Research supported in part by NSF grant DMS-1954170.
\\ \indent AT: School of Mathematics, University of Birmingham, United Kingdom. Email: \texttt{a.c.treglown@bham.ac.uk}. Research supported by EPSRC grants EP/V002279/1 and EP/V048287/1.}

\begin{document}

\begin{abstract}
In 1960, Ghouila-Houri proved that every strongly connected directed graph $G$ on $n$ vertices with minimum degree at least $n$ contains a directed Hamilton cycle.  We asymptotically generalize this result by proving the following: every 
directed graph $G$ on $n$ vertices with minimum degree at least $(1+o(1))n$ contains every orientation of a Hamilton cycle,  except for the directed Hamilton cycle in the case when $G$ is not strongly connected. In fact, this minimum degree condition forces every orientation of a cycle in $G$ of every possible length, other than perhaps the directed cycles. 
\end{abstract}

\maketitle

\section{Introduction}
\subsection{Hamilton cycles in directed graphs}
In this paper we give an asymptotic generalization of one of the cornerstone results in the study of directed graphs, Ghouila-Houri's theorem~\cite{GH}. 
Throughout, the \emph{digraphs} we consider do not have loops and we allow at most one edge in each
direction between any pair of vertices. The \emph{minimum  degree $\delta (G)$} of a digraph $G$ is the minimum number of edges incident to a vertex in $G$. A \emph{directed Hamilton cycle} in an $n$-vertex digraph is a cycle $(v_1, v_2, \dots , v_n, v_{n+1} = v_1)$ in which every edge $v_iv_{i+1}$ ($i \in [n]$) is oriented from $v_i$ to $v_{i+1}$. The notion of a   \emph{directed Hamilton path} is defined analogously.
\begin{theorem}[Ghouila-Houri~\cite{GH}]\label{ghthm}
If $G$ is a strongly connected digraph on $n \geq 2$ vertices with  $\delta (G)\geq n$, then $G$ contains a directed Hamilton cycle.
\end{theorem}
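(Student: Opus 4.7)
My plan is to argue by contradiction: assume $G$ contains no directed Hamilton cycle, and take a longest directed cycle $C = x_0 x_1 \cdots x_{k-1} x_0$ (indices mod $k$). Such a $C$ exists because $\delta(G) \geq n \geq 2$ forces every vertex to have positive out-degree, so repeatedly following out-edges from any vertex must eventually revisit a vertex and produce a cycle. Assume $k<n$ and set $H := V(G)\setminus V(C) \neq \emptyset$.

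The central tool is a ``detour'' exchange. If some $y \in H$ satisfies both $x_i \to y$ and $y \to x_{i+1}$, then replacing the edge $x_i x_{i+1}$ of $C$ by the length-two detour $x_i \to y \to x_{i+1}$ gives a cycle of length $k+1$, contradicting the maximality of $C$. More generally, for any directed path $Q = y_0 \to y_1 \to \cdots \to y_m$ in $G[H]$, the edges $x_i \to y_0$ and $y_m \to x_{i+1}$ cannot both be present, as they would yield a cycle of length $k+m+1$. Equivalently, the cyclic sets $\{i : x_i \to y_0\}+1$ and $\{i : y_m \to x_i\}$ are disjoint in $\ZZ/k\ZZ$, so the number of in-neighbours of $y_0$ on $C$ plus the number of out-neighbours of $y_m$ on $C$ is at most $k$.

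I would then choose $Q$ to be a longest directed path in $G[H]$. Maximality of $Q$ inside $H$ forces every $H$-in-neighbour of $y_0$ to lie on $V(Q)$ and every $H$-out-neighbour of $y_m$ to lie on $V(Q)$, yielding $d^-(y_0) + d^+(y_m) \leq k + 2m$. To convert this into a contradiction with $\delta(G)\geq n$, I would invoke strong connectivity: it guarantees edges from $V(C)$ into $H$ and from $H$ back to $V(C)$, which when spliced with $Q$ produce either an immediate longer cycle (contradicting maximality of $C$) or a matching lower bound on $d^-(y_0)+d^+(y_m)$ by a degree count that ultimately forces $k\geq n$.

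The main obstacle is precisely this last step. Unlike the undirected Dirac-style proof, one cannot reverse a directed path to close an arbitrary chord pair into a cycle, so a ``symmetric'' exchange involving out-neighbours of $y_0$ and in-neighbours of $y_m$ is not automatic. Strong connectivity must do real work here: without it the degree condition alone is consistent with partitioning $V(G)$ into two near-tournaments with all cross-edges oriented in one direction, a configuration admitting no directed Hamilton cycle. The proof must therefore carefully use strong connectivity to guarantee an appropriate ``return'' path through $H$ compatible with $Q$, so that the detour exchange can close the argument and deliver the contradiction.
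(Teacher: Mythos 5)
This is not a complete proof, and you say so yourself: the entire argument hinges on a final step (``strong connectivity must do real work here\dots the proof must therefore carefully use strong connectivity to guarantee an appropriate return path'') that you never actually carry out. The parts you do prove are fine but far too weak. The detour exchange gives $d^-(y_0)+d^+(y_m)\leq k+2m$, where $k+2m\leq k+2(n-k-1)=2n-k-2$; but the hypothesis $\delta(G)\geq n$ only controls the \emph{total} degree $d^+(v)+d^-(v)$ of a single vertex, so it says nothing about the mixed sum $d^-(y_0)+d^+(y_m)$ across two different vertices. Even if one somehow obtained $d^-(y_0)+d^+(y_m)\geq n$, the inequality $n\leq 2n-k-2$ only yields $k\leq n-2$, which is no contradiction at all. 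As you correctly observe, the symmetric exchange (using $d^+(y_0)$ and $d^-(y_m)$) is unavailable in the directed setting because one cannot reverse the path $Q$, and you offer no replacement for it. So the proposal identifies the right first moves (longest cycle, insertion of a longest path of $H$) but stops exactly where the theorem becomes hard.

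For context: the paper does not prove this statement at all; it is quoted as Ghouila-Houri's classical theorem with a citation, and the paper's contribution builds on top of it. The known proofs (Ghouila-Houri's original, and the later Bondy--Thomass\'e--style insertion proofs of Meyniel's stronger theorem, which implies this one) require a substantially more delicate counting argument: one considers, for a vertex or path outside the cycle, the set of positions where it can be \emph{inserted}, and combines degree counts at both endpoints of the outside path with a careful analysis of consecutive cycle vertices that are neither in-neighbours of one endpoint nor out-neighbours of the other, together with the strong connectivity hypothesis to rule out the two-blocks-with-one-way-cross-edges configuration. None of that machinery appears in your sketch, so the argument as written cannot be completed along the lines you indicate without essentially importing the missing (and hardest) part of the classical proof.
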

Ghouila-Houri~\cite{GH} also determined the minimum degree threshold for forcing a directed Hamilton path.
\begin{corollary}[Ghouila-Houri~\cite{GH}]\label{ghcor}
If $G$ is digraph on $n \geq 2$ vertices with  $\delta (G)\geq n-1$, then $G$ contains a directed Hamilton path.
\end{corollary}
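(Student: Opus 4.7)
The plan is to deduce Corollary~\ref{ghcor} from Theorem~\ref{ghthm} by the standard device of adding a universal vertex. Concretely, I would form an auxiliary digraph $G'$ on $n+1$ vertices by taking $G$ and adjoining a new vertex $v$, together with both arcs $vu$ and $uv$ for every $u \in V(G)$.

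With this construction, three quick verifications remain. First, $G'$ is strongly connected: for any two original vertices $x,y$, the path $x \to v \to y$ lies in $G'$, and $v$ itself is obviously reachable from and can reach every vertex. Second, the degree condition transfers: every original vertex $u$ gains exactly two incident arcs (namely $vu$ and $uv$), so its degree in $G'$ is at least $(n-1)+2 = n+1$, while $v$ itself has $2n \geq n+1$ incident arcs (using $n \geq 2$). Hence $\delta(G') \geq n+1 = |V(G')|$.

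Theorem~\ref{ghthm} now applies to $G'$ and produces a directed Hamilton cycle $C$. Deleting $v$ from $C$ leaves a directed Hamilton path on the remaining $n$ vertices, i.e.\ a directed Hamilton path in $G$, as required.

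There is essentially no obstacle here; the only thing to be careful about is that the multiplicity convention (at most one arc in each direction between any pair) is preserved by the construction, which it is since $v$ is a brand new vertex. The step that one might worry about, namely verifying that the minimum degree bound on $G'$ actually reaches $|V(G')|$ rather than falling one short, works out cleanly precisely because adding $v$ with arcs in both directions boosts every original degree by $2$ while also increasing the vertex count by only $1$.
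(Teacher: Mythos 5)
Your argument is correct. The paper states Corollary~\ref{ghcor} as a cited result of Ghouila-Houri without supplying a proof, and your reduction is the standard one: adjoin an apex vertex $v$ joined in both directions to all of $V(G)$, check that the resulting $(n+1)$-vertex digraph $G'$ is strongly connected with $\delta(G')\geq (n-1)+2=n+1=|V(G')|$, apply Theorem~\ref{ghthm} to obtain a directed Hamilton cycle, and delete $v$ to leave a directed Hamilton path whose edges all lie in $G$. All the verifications (including $d(v)=2n\geq n+1$ for $n\geq 2$, and the preservation of the multiplicity convention) check out.
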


Note that the bounds on $\delta (G)$ in Theorem~\ref{ghthm} and Corollary~\ref{ghcor} are tight as witnessed by the usual examples of a complete bipartite digraph with parts of sizes $\ceil{\frac{n}{2}}-1$ and $\floor{\frac{n}{2}}+1$, and the disjoint union of two complete digraphs of sizes $\lfloor \frac{n}{2} \rfloor$ and $\lceil \frac{n}{2} \rceil$, respectively.  Moreover, one cannot 
omit the strong connectivity condition in Theorem~\ref{ghthm} as there 
are $n$-vertex digraphs $G$ with $\delta (G) = \floor{\frac{3n}{2}} - 2$
which are not strongly connected and thus do not contain a directed Hamilton cycle.

As discussed further below, there has been significant interest in considering other orientations of Hamilton cycles in digraphs.
The main result of this paper is the following asymptotic generalization of Theorem~\ref{ghthm}.
\begin{theorem}\label{thm:main}
For all $\eta>0$, there exists $n_0\in \mathbb N$ such that if $G$ is a digraph on $n\geq n_0$ vertices with $\delta(G)\geq (1+\eta)n$, then $G$ contains every orientation of a Hamilton cycle,  except for the directed Hamilton cycle in the case when $G$ is not strongly connected.
\end{theorem}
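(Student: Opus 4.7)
If the target orientation $C$ is the consistently directed Hamilton cycle, the conclusion follows at once from Ghouila-Houri's theorem: $\delta(G)\geq(1+\eta)n>n$ and strong connectivity of $G$ is assumed in this sub-case, so Theorem~\ref{ghthm} applies. Assume henceforth that $C$ is some non-consistent orientation of the cycle $C_n$; then $C$ contains at least one source (both incident cycle-edges pointing out) and at least one sink, and is decomposed by these change-points into $2k\geq 2$ maximal monotone arcs. The existence of at least one source and one sink is crucial: it is the source of the ``flexibility'' that will let us go beyond the directed case.

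My plan is a regularity-plus-absorption strategy. First, apply the Diregularity Lemma to $G$ with parameters $\eps\ll d\ll\eta$ to obtain a reduced digraph $R$ inheriting $\delta(R)\geq(1+\eta/2)|R|$; in particular $R$ is strongly connected and Theorem~\ref{ghthm} supplies a directed Hamilton cycle $\tilde C$ in $R$, which after a standard cleaning step we may assume to be $\eps$-super-regular on every consecutive pair of clusters. Second, construct in $G$ a short \emph{absorbing path} $P_{\mathrm{abs}}$ whose shape is a sub-orientation of $C$ and which is flexible in the sense that any small set of leftover vertices (of the correct in/out type profile) can be inserted into $P_{\mathrm{abs}}$ to extend it along $C$. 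Third, remove $V(P_{\mathrm{abs}})$ from $G$ and use the super-regular structure of $\tilde C$ together with the Blow-up Lemma to embed most of $C$, traversing $\tilde C$ and embedding each monotone arc of $C$ along consecutive super-regular pairs; finally close the cycle by absorbing the small leftover set into $P_{\mathrm{abs}}$.

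The principal obstacle is the construction of $P_{\mathrm{abs}}$: the absorbing gadget for a leftover vertex $v$ must respect the local orientation of $C$ at the position where $v$ is inserted, and since this position can be a source, a sink, or an intermediate vertex, gadgets of several different shapes are required. The slack $(1+\eta)n$ in the minimum degree should be exploited to show that for every $v\in V(G)$ and every required local type there are many short gadgets containing $v$, whereupon a standard probabilistic selection yields a single path $P_{\mathrm{abs}}$ containing enough gadgets of every type. A secondary issue is that the reduced cycle $\tilde C$ is directed whereas $C$ need not be, so at each source or sink of $C$ one must effectively reverse direction within a short segment of clusters of $R$; the high minimum degree of $R$ again ensures this is feasible via common in- and out-neighbourhoods, so that each monotone arc of $C$ can be threaded along an arc of $\tilde C$ and reversed at its endpoints.
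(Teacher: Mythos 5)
Your reduction of the directed-Hamilton-cycle case to Ghouila-Houri is fine, but the main argument has a fatal gap at its very first step: you assert that $\delta(R)\geq(1+\eta/2)|R|$ makes the reduced digraph $R$ strongly connected, and hence Hamiltonian by Theorem~\ref{ghthm}. This is false, and the failure is exactly the central difficulty of the problem. A minimum total degree of $(1+\eta)n$ (or even $\lfloor 3n/2\rfloor-2$) does not force strong connectivity: take two sets $A,B$ of size $n/2$, put all double edges inside each, and all edges from $A$ to $B$ but none back; then $\delta(G)=\lfloor 3n/2\rfloor-2$ yet $G$ has no directed cycle meeting both $A$ and $B$. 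The same structure survives regularization, so $R$ need not contain a directed Hamilton cycle $\tilde C$, and your entire plan of threading the monotone arcs of $C$ along $\tilde C$ collapses. Note that in this example $G$ still contains every \emph{non-directed} orientation of a Hamilton cycle, but only because such a $C$ has a source and a sink: one can enter $B$ through a sink of $C$ and leave it through a source, so that both crossing edges point from $A$ to $B$. Any correct proof must detect and exploit this "blow-up of a transitive tournament" structure rather than assume it away.

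This is precisely what the paper does: Proposition~\ref{prop:structure} shows that $G$ either is a robust outexpander (in which case Taylor's theorem already gives every orientation of a Hamilton cycle) or partitions into at most $k$ large robust outexpanders $V_1,\dots,V_t$ with almost all cross edges oriented consistently with a transitive tournament on the parts. The embedding (Proposition~\ref{prop:main}) then places one long segment of $C$ in each part and uses the sources and sinks of $C$ to make every crossing between parts go in the dense direction. Your absorbing-path machinery and the direction-reversal gadgets are plausible \emph{within} a single strongly connected, expanding piece, but they cannot substitute for the missing structural dichotomy; without it the argument does not get off the ground. Separately, even granting $\tilde C$, the claim that a short absorbing path "whose shape is a sub-orientation of $C$" can absorb arbitrary leftover vertices at arbitrary positions of $C$ (source, sink, or interior of a monotone arc) is asserted rather than argued, and for orientations with very few switches (e.g., $C$ with a single long directed arc) the required gadget types are highly constrained; this would need a genuine construction.
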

 Theorem~\ref{thm:main} yields the following simple consequence. 
\begin{corollary}\label{cor1}
For all $\eta>0$, there exists $n_0\in \mathbb N$ such that if $G$ is a digraph on $n\geq n_0$ vertices with $\delta(G)\geq (1+\eta)n$, then $G$ contains every orientation of a Hamilton path.
\end{corollary}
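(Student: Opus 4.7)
The plan is to deduce Corollary~\ref{cor1} from Theorem~\ref{thm:main} by a straightforward ``close the path into a cycle'' reduction on the same vertex set, so that no modification of $G$ (and in particular no change of $\eta$ or $n_0$) is needed. Given any orientation $\vec{P}$ of a Hamilton path on $n$ vertices with endpoints $u_1$ and $u_n$, I would form an orientation $\vec{C}$ of a Hamilton cycle on the same $n$ vertices by adding a single closing edge between $u_1$ and $u_n$. I choose the orientation of this closing edge so that $\vec{C}$ is not the directed Hamilton cycle; this is always possible. Indeed, if $\vec{P}$ contains both a forward and a backward edge then neither choice of closing orientation makes all edges of $\vec{C}$ flow in the same cyclic direction, while if $\vec{P}$ is the directed Hamilton path $u_1 \to u_2 \to \cdots \to u_n$ then orienting the closing edge from $u_1$ to $u_n$ makes $u_1$ a source and $u_n$ a sink in $\vec{C}$, which again rules out the directed Hamilton cycle.

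Next I would apply Theorem~\ref{thm:main} to $G$ and $\vec{C}$ with the same $\eta$ and with $n_0$ taken from that theorem. Since $\vec{C}$ is not the directed Hamilton cycle, the exceptional case of Theorem~\ref{thm:main} does not apply, and $G$ contains a copy of $\vec{C}$ regardless of whether $G$ is strongly connected. Deleting from this copy the edge corresponding (via the embedding) to the closing edge of $\vec{C}$ leaves a spanning subpath of $G$ whose edge orientations agree exactly with those of $\vec{P}$, i.e., a copy of $\vec{P}$ in $G$.

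The argument is a clean cycle-to-path reduction, and I do not expect any real obstacles. The only subtlety is choosing the orientation of the closing edge so as to avoid the unique exceptional case in Theorem~\ref{thm:main}, which is handled by the two-case analysis above.
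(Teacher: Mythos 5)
Your reduction is correct and is exactly the argument the paper has in mind (the paper states Corollary~\ref{cor1} as a ``simple consequence'' of Theorem~\ref{thm:main} without writing out a proof): close the oriented path into an oriented Hamilton cycle on the same vertex set, choosing the orientation of the closing edge to avoid the directed Hamilton cycle, apply Theorem~\ref{thm:main}, and delete the closing edge. The case analysis showing the closing orientation can always be chosen to avoid the exceptional case is the only point requiring care, and you handle it correctly.
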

 
Note that the minimum  degree conditions in Theorem~\ref{thm:main} and Corollary~\ref{cor1} are asymptotically tight as the examples given after Corollary~\ref{ghcor} do not contain a Hamilton cycle (resp.~path) of any orientation.

\smallskip

The study of Hamiltonicity in digraphs has a rich history and, as we will now discuss, Theorem~\ref{thm:main} can be viewed as an asymptotic generalization of a couple of research strands in this area. For a wider survey on Hamilton cycles in digraphs, see~\cite{survey}.

Hamilton cycles and paths have been well-studied in \emph{tournaments} (i.e., orientations of complete graphs). A simple argument of R\'edei~\cite{redei} shows that every tournament 
contains a directed Hamilton path.
Motivated by this, in 1971
Gr\"unbaum~\cite{G71} proved that, other than three small exceptional tournaments, every tournament contains an \emph{anti-directed} Hamilton path.\footnote{A digraph is \emph{anti-directed} if it does not contain a directed path on $2$ edges.}
 Rosenfeld~\cite{R72}  provided a short proof of Gr\"unbaum's theorem, and also conjectured that every sufficiently large tournament contains every orientation of a Hamilton path. More than a decade later,
Thomason~\cite{T86} resolved Rosenfeld's conjecture. 
Finally, Havet and  Thomass\'e~\cite{HavT} strenghtened this result,
 proving that, other than Gr\"unbaum's~\cite{G71} three exceptional tournaments, every tournament contains every orientation of a Hamilton path.

A classical result of Camion~\cite{camion} states that a tournament contains a directed Hamilton cycle if and only if it is
strongly connected.
Thomassen~\cite{thomassen} (for even $n \geq 50$) and
 Rosenfeld~\cite{R74} (for even $n \geq 28$) proved that every $n$-vertex tournament contains an anti-directed Hamilton cycle. The latter also conjectured that every sufficiently large tournament contains every orientation of a Hamilton cycle (except perhaps the directed Hamilton cycle). After various partial results, this conjecture was proven for all tournaments on at least $68$ vertices by Havet~\cite{Hav}.

These results therefore provide a  natural class of $n$-vertex digraphs $G$ with $\delta(G)=n-1$ that contain every orientation of a Hamilton path and Hamilton cycle (except perhaps the directed Hamilton cycle). On the other hand, Theorem~\ref{thm:main} and Corollary~\ref{cor1} show that  analogues of these statements hold for \emph{all} sufficiently large digraphs at the expense of a slightly larger minimum degree condition. 
It would be interesting to obtain  
results that unify these two settings.
For example, it may be the case that every sufficiently large $n$-vertex digraph with $\delta(G) \geq n-1$ contains every orientation of a Hamilton path.
This and related questions are discussed  in Section~\ref{sec:conc}.

Seeking to generalize  Rosenfeld's~\cite{R74} and Thomassen's~\cite{thomassen} work on tournaments, in 1980 Grant~\cite{G80} 
asked whether every  even order $n$-vertex digraph $G$ 
with $\delta (G)\geq n-1$ contains an anti-directed Hamilton cycle. However, Cai~\cite{Cai} produced an example of an $n$-vertex digraph $G$ 
with $\delta (G)= n$ that does not contain an anti-directed Hamilton cycle, for every even $n\in \mathbb N$. This led the first author and Molla~\cite[Conjecture 1.7]{DM} to refine the conjecture Grant~\cite{G80} hinted at: every 
even order $n$-vertex digraph $G$ 
with $\delta (G)\geq n+1$ contains an anti-directed Hamilton cycle. Thus, Theorem~\ref{thm:main} asymptotically resolves this conjecture.

There has also been  interest in the minimum semi-degree threshold for forcing a given orientation of a Hamilton cycle in a digraph. Given a digraph $G$, its \emph{minimum semi-degree $\delta^0(G)$} is the minimum of all the in- and outdegrees of the vertices in $G$. Ghoulia-Houri's theorem implies that every $n$-vertex digraph $G$ with $\delta^0(G)\geq n/2$ contains a directed Hamilton cycle; indeed, such a digraph must be strongly connected. Moreover, the bound on $\delta^0(G)$ is sharp.
Grant~\cite{G80} gave a minimum semi-degree condition for forcing an anti-directed Hamilton cycle. After a few further partial results~\cite{BJMPT, HT_di, PT09}, the first author and Molla~\cite{DM} proved the following: every sufficiently large even order $n$-vertex digraph $G$ with $\delta^0(G)\geq n/2+1$ contains an anti-directed Hamilton cycle; in fact, provided $G$ is not one of two extremal examples, a minimum semi-degree of $\delta^0(G)\geq n/2$ suffices here.

Moving beyond anti-directed Hamilton cycles, we now have a complete picture of the minimum semi-degree threshold for forcing an arbitrary orientation of a Hamilton cycle. In 1995, H{\"a}ggkvist and Thomason~\cite{HT_di} proved that every sufficiently large 
$n$-vertex digraph $G$ with $\delta^0(G)\geq n/2+n^{5/6}$ contains \emph{every} orientation of a Hamilton cycle.  Twenty years later, DeBiasio, K\"uhn, Molla, Osthus and Taylor~\cite{DKMOT} provided the following sharp version of this result.
\begin{theorem}[DeBiasio, K\"uhn, Molla, Osthus and Taylor~\cite{DKMOT}]\label{minsemithm}
There exists $n_0 \in \mathbb N$ such that for all $n\geq n_0$, if $G$ is an $n$-vertex digraph with $\delta^0(G)\geq n/2$, then $G$ contains every orientation of a Hamilton cycle that is not anti-directed.  
\end{theorem}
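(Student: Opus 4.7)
The plan is to combine the directed Regularity Lemma with an absorbing argument tailored to the prescribed orientation $\vec{C}$. The directed Hamilton cycle case is immediate: $\delta^0(G) \ge n/2$ forces $\delta(G) \ge n$ and strong connectivity, so Ghouila-Houri's theorem (Theorem~\ref{ghthm}) applies. For any other non-anti-directed orientation $\vec{C}$, there is at least one vertex at which two consecutive arcs of $\vec{C}$ agree in direction, and the strategy below exploits such local ``double arrows'' to build both absorbing gadgets and a macroscopic framework.

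First, fix constants $\epsilon \ll d \ll 1$, set aside a small random reservoir $R_0 \subseteq V(G)$, and build an absorbing path $P_A$ of length $o(n)$ whose orientation matches an interval of $\vec{C}$ and which contains, for every $v \in V(G)$, many disjoint absorbing gadgets capable of inserting $v$ while preserving the local orientation. These gadgets are short oriented paths centred at a double arrow of $\vec{C}$; the condition $\delta^0(G) \ge n/2$ guarantees that every two vertices share many common in- and out-neighbours, which is exactly what is needed to produce gadgets in bulk via a probabilistic deletion argument.

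Next, apply the directed Regularity Lemma to $G \setminus (V(P_A) \cup R_0)$ to obtain a reduced digraph $R$ on $k$ clusters with $\delta^0(R) \ge (1/2 - \epsilon - d)k$. The structural heart of the proof is to show that $R$ contains a spanning structure (a Hamilton cycle or a suitable $1$-factor) whose arcs can be orchestrated to realise the macroscopic pattern of $\vec{C}$: decompose $\vec{C}$ into its maximal monotone runs and embed each run of length $\ell$ as a directed path of length $\ell$ inside a single regular pair of $R$, linking successive runs through the vertices at which the orientation of $\vec{C}$ switches. Blow this structure up inside the clusters to cover all but $o(n)$ vertices, use $R_0$ and a connecting lemma (again driven by the $n/2$ semi-degree condition and the associated common-neighbourhood guarantees) to splice the pieces together and to $P_A$, and finally absorb the leftover vertices into $P_A$ to close up a copy of $\vec{C}$.

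The main obstacle is the structural step in $R$ when $G$ is close to an extremal configuration. The bound $\delta^0 \ge n/2$ is tight up to lower-order terms for almost-bipartite digraphs, whose arcs naturally alternate; such digraphs readily support anti-directed orientations but resist long monotone runs in $R$. The standard remedy is a stability dichotomy: either $G$ is $\epsilon$-close to the extremal bipartite example, in which case one argues by hand using a balanced bipartite construction matching $\vec{C}$, or $G$ is far from extremal and $R$ has enough flexibility for the embedding to go through uniformly over every non-anti-directed $\vec{C}$. Making this dichotomy clean across all possible block structures of $\vec{C}$ --- which vary widely --- is where the bulk of the technical work will lie.
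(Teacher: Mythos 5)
This statement is not proved in the paper at all: it is Theorem~\ref{minsemithm}, quoted verbatim as a result of DeBiasio, K\"uhn, Molla, Osthus and Taylor~\cite{DKMOT} and used only as motivation (the paper's Theorem~\ref{thm:main} asymptotically generalizes it via $\delta(G)\geq 2\delta^0(G)$). So there is no internal proof to compare your proposal against, and I can only assess your sketch on its own terms.

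Your reduction of the directed-Hamilton-cycle case to Ghouila--Houri is correct: $\delta^0(G)\geq n/2$ does force both $\delta(G)\geq n$ and strong connectivity. The rest, however, is a template rather than a proof, and it contains one concretely false step: you assert that $\delta^0(G)\geq n/2$ ``guarantees that every two vertices share many common in- and out-neighbours.'' It guarantees only $|N^+(u)\cap N^+(v)|\geq d^+(u)+d^+(v)-n\geq 0$, which can be zero; in the near-bipartite configurations that make the bound $n/2$ tight, common neighbourhoods really do vanish, so the absorbing gadgets cannot be ``produced in bulk'' there. Since you build $P_A$ before invoking any extremal dichotomy, the construction breaks exactly in the hard regime. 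Two further soft spots: a maximal monotone run of $C$ can have length $\Theta(n)$ and so cannot be embedded ``inside a single regular pair of $R$'' (a regular pair only supports directed paths of length up to roughly twice a cluster size; long runs must wind through a closed walk in $R$), and the stability analysis that you defer --- handling every possible switch pattern of $C$ when $G$ is close to the bipartite or the two-cliques extremal example --- is precisely where the published proof's difficulty lies, so deferring it leaves the argument essentially open. As a roadmap your sketch points in a reasonable direction (the actual proof in~\cite{DKMOT} also splits according to the number of switches of $C$ and runs an extremal/non-extremal dichotomy, using robust expansion rather than a bare regularity--absorption scheme), but as it stands it is not a proof.
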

Note that since $\delta (G) \geq 2\delta^0(G)$, Theorem~\ref{thm:main} asymptotically generalizes Theorem~\ref{minsemithm}.
It would be interesting (though likely very challenging) to prove a sharp version of Theorem~\ref{thm:main} \'a la Theorem~\ref{minsemithm}; this is discussed further in Section~\ref{sec:conc}.

\subsection{Pancylicity}
Our work also has implications to cycles of shorter length.
 An \emph{oriented graph} is a digraph with at most one edge between any pair of vertices.
An \emph{oriented cycle} (resp.\ \emph{oriented path}) is an oriented graph whose underlying graph is a cycle (resp.~path). 
A \emph{directed cycle} (resp.~\emph{directed path}) is an oriented cycle (resp.~path) in which all edges are oriented in the same direction.
The following result
is a straightforward consequence of Theorem~\ref{thm:main}, a lemma of Taylor~\cite{Tay}, and a couple of our auxiliary results. We defer its proof to the appendix.
\begin{theorem}\label{cor2}
   For all $k \in \mathbb N$ and $\gamma >0$, there exists $n_0\in \mathbb N$ such that the following holds. If $G$ is a digraph on $n\geq n_0$ vertices with $\delta(G)\geq (1+\frac{1}{k+1}+\gamma )n$, then $G$ contains every oriented cycle on at most $n$ vertices, except for perhaps the directed  cycles on more than $\lceil \frac{n}{k} \rceil$ vertices.
   Moreover, if $G$ is a digraph on $n \geq 2$ vertices with  $\delta(G)\geq \floor{\frac{3n}{2}}-1$, then $G$ contains every oriented cycle of every possible length.
\end{theorem}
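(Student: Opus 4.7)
The plan is to reduce to Theorem~\ref{thm:main} on an induced subgraph of size $\ell$ whenever the arithmetic permits, and to cover the remaining cases with Taylor's short-cycle lemma and the paper's auxiliary results. Fix an oriented cycle $C$ on $\ell\le n$ vertices, assumed not to be a directed cycle of length greater than $\lceil n/k\rceil$.

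For long cycles, if $\ell \geq (1 - \gamma/3)n$, pick any $S\subseteq V(G)$ with $|S|=\ell$; then for every $v \in S$,
\[
d_{G[S]}(v) \geq \delta(G) - 2(n-\ell) \geq \bigl(1+\tfrac{1}{k+1}+\tfrac{\gamma}{3}\bigr)n > \bigl(1+\tfrac{1}{k+1}\bigr)\ell,
\]
so $G[S]$ meets the hypothesis of Theorem~\ref{thm:main}. Since $\ell > \lceil n/k\rceil$ in this range (for $k\ge 2$), $C$ is non-directed, and Theorem~\ref{thm:main} supplies $C$ as a Hamilton cycle of $G[S]$, giving the desired $\ell$-cycle in $G$. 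For $\ell$ below this threshold I would invoke Taylor's lemma~\cite{Tay}, which should embed every oriented cycle of bounded length under our linear-in-$n$ degree excess over $n$---covering both short non-directed cycles and the short directed cycles that are needed only up to length $\lceil n/k\rceil$. For the intermediate range of $\ell$, the paper's auxiliary lemmas should interpolate, likely via a Bondy-style chord argument: since $\delta(G)>n$ forces a linear number of digons at every vertex, one can shorten a cycle of a given orientation while preserving its orientation by cutting at a digon and rerouting.

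The main obstacle I anticipate is the directed-cycle case, since Theorem~\ref{thm:main} explicitly excludes directed Hamilton cycles from its conclusion. One must show that the sharper threshold $(1+\tfrac{1}{k+1})n$---as opposed to the $(1+\tfrac{1}{k})n$ saturated by the transitive $k$-part blow-up extremal example (which has no directed cycle longer than $n/k$)---is precisely what forces directed cycles of every length up to $\lceil n/k\rceil$; this is where the specific choice of the hypothesis and the shape of the exception enter essentially.

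For the \emph{moreover} statement ($k=1$ with the explicit bound $\delta(G)\geq \lfloor 3n/2\rfloor - 1$), note first that $d^+(v)+d^-(v) \geq \lfloor 3n/2\rfloor-1$ together with $d^{\pm}(v)\leq n-1$ yields $\delta^0(G)\geq \lfloor n/2\rfloor$. Then Theorem~\ref{minsemithm} gives every non-anti-directed Hamilton cycle, classical pancyclicity (H\"aggkvist--Thomassen) gives directed cycles of all lengths, and a DeBiasio--Molla-type argument handles the anti-directed case; Taylor's lemma supplies short non-directed cycles of every orientation. A finite-$n$ check replaces the asymptotic $(1+\gamma)$ slack.
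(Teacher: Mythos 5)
The central gap is the one you flag yourself and do not resolve: the oriented cycles of length at most $\lceil n/k\rceil$, and in particular the directed ones, which the theorem requires you to find. Theorem~\ref{thm:main} can never produce a directed cycle, and Taylor's connecting lemma cannot simply be ``invoked'' on $G$: it requires $G$ to be a robust outexpander (the transitive blow-up $G_1$ shows a digraph with $\delta(G)\geq(1+\frac{1}{k+1}+\gamma)n$ need not be one), and it only handles paths of length between $\lceil 2/\nu^3\rceil$ and $\nu^3n/4$, so it misses both constant-length cycles and cycles of length close to $n/k$. The paper's mechanism is Proposition~\ref{prop:structure}: the partition it produces has at most $k$ parts, so some part $V_i$ satisfies $|V_i|\geq\lceil n/k\rceil$, and $G[V_i]$ is a robust outexpander of large minimum degree; a strengthened connecting lemma (Lemma~\ref{prop:shortConnectgen}, proved via splitting the expander and a Hamilton-connectedness version of Taylor's theorem) then embeds \emph{every} oriented cycle -- directed or not -- of every length from $\lceil 2/\nu^3\rceil$ up to $|V_i|$ inside $V_i$. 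Cycles of constant length are handled separately via the K\H{o}v\'ari--S\'os--Tur\'an theorem applied to the graph of double edges, which has minimum degree at least $\gamma n$. Without some version of this two-step structure your argument produces no directed cycle of length, say, $n/(k+\tfrac12)$.

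Two further steps would fail as written. For non-directed $C$ with $\lceil n/k\rceil<|C|<(1-\gamma/3)n$, an arbitrary $S$ with $|S|=|C|$ loses far too much degree; the paper takes $S$ \emph{random} of size $|C|$, so that Chernoff's bound for the hypergeometric distribution gives $d_G(v,S)\geq(1+\gamma/2)|S|$ for all $v\in S$, and only then applies Theorem~\ref{thm:main}. Your proposed ``Bondy-style'' shortening at a digon cannot substitute for this: you must realize a \emph{prescribed} orientation, and deleting or rerouting vertices changes the pattern of sources and sinks. Finally, the moreover part is an exact statement for all $n\geq 2$, so it cannot be assembled from Theorem~\ref{minsemithm}, the DeBiasio--Molla theorem and Taylor's lemma, all of which need $n$ large (and for odd $n$ your computation only yields $\delta^0(G)\geq\lfloor n/2\rfloor<n/2$, below the hypothesis of Theorem~\ref{minsemithm}). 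The paper instead applies an exact pancyclicity theorem for undirected graphs~\cite{aldred} to the double-edge graph $G^*$, which has $\delta(G^*)\geq\lfloor n/2\rfloor$, and deals with the finitely many exceptional configurations by hand.
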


Note that the moreover part of Theorem~\ref{cor2} is actually a simple consequence of a generalization~\cite{aldred} of Bondy's  pancyclicity theorem~\cite{bondy}.
Theorem~\ref{cor2} immediately implies the following pancyclicity-type extension of Theorem~\ref{thm:main}.
\begin{corollary}\label{corlab}
  For all $\eta>0$, there exists $n_0\in \mathbb N$ such that if $G$ is a digraph on $n\geq n_0$ vertices with $\delta(G)\geq (1+\eta)n$, then $G$ contains every oriented cycle on at most $n$ vertices, except perhaps the directed cycles.
\end{corollary}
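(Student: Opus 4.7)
The plan is to derive Corollary~\ref{corlab} as a direct specialization of Theorem~\ref{cor2}. Given $\eta>0$, I would first choose a positive integer $k$ large enough that $\frac{1}{k+1}\leq \eta/2$, for instance $k:=\lceil 2/\eta\rceil$, and then set $\gamma:=\eta/2$. With these choices one has
\[
1+\frac{1}{k+1}+\gamma \;\leq\; 1+\frac{\eta}{2}+\frac{\eta}{2} \;=\; 1+\eta,
\]
so the minimum degree assumption $\delta(G)\geq (1+\eta)n$ of Corollary~\ref{corlab} automatically upgrades to the hypothesis $\delta(G)\geq (1+\tfrac{1}{k+1}+\gamma)n$ of Theorem~\ref{cor2}.

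Next, I would apply Theorem~\ref{cor2} with these values of $k$ and $\gamma$ to obtain the threshold $n_0$, and this is the $n_0$ returned by Corollary~\ref{corlab}. For any digraph $G$ on $n\geq n_0$ vertices satisfying the hypothesis of the corollary, Theorem~\ref{cor2} then guarantees that $G$ contains every oriented cycle on at most $n$ vertices, with the only possible exceptions being the directed cycles on more than $\lceil n/k\rceil$ vertices. Since the conclusion of Corollary~\ref{corlab} permits \emph{all} directed cycles to be absent, this conclusion is weaker than what Theorem~\ref{cor2} already delivers; in particular, the corollary follows.

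There is no substantive obstacle to this derivation, since all the work has already been done in Theorem~\ref{cor2}; the argument is a pure parameter specialization. The only point warranting explicit verification is the inequality $1+\tfrac{1}{k+1}+\gamma\leq 1+\eta$, which is exactly why $k$ must be chosen after $\eta$ is given (so that $\tfrac{1}{k+1}$ is absorbed into $\eta$) rather than beforehand.
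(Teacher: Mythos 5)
Your derivation is correct and is exactly the specialization the paper intends (the paper simply asserts that Theorem~\ref{cor2} ``immediately implies'' Corollary~\ref{corlab}). Choosing $k$ with $\tfrac{1}{k+1}\leq \eta/2$ and $\gamma=\eta/2$, and noting that the corollary's conclusion is weaker since it excepts all directed cycles rather than only those of length exceeding $\lceil n/k\rceil$, is precisely the intended argument.
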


Note that  Theorem~\ref{cor2} (including the moreover part) is   best-possible in a  strong sense. Indeed, let $ k, n \in \mathbb N$ with $n\geq k+1$. Consider the $n$-vertex digraph $G_1$ obtained from the transitive tournament $T$ on $k+1$  vertices by blowing-up and replacing each vertex of $T$ with a complete digraph on $\ceil{\frac{n}{k+1}}$  or $\floor{\frac{n}{k+1}}$ vertices. Now we have $\delta(G_1)= n+ \floor{\frac{n}{k+1}}-2 $ and $G_1$ contains no directed cycle of length more than $ \ceil{\frac{n}{k+1}}$.
In particular, having $\delta(G)$ almost $(1+\frac{1}{k+1})n$ does not guarantee directed cycles of length more than $ \ceil{\frac{n}{k+1}}$; however, as soon as $\delta(G)$ is somewhat bigger than $(1+\frac{1}{k+1})n$, Theorem~\ref{cor2} tells us that $G$ in fact contains every directed cycle of length up to $ \ceil{\frac{n}{k}}$.
Thus, Theorem~\ref{cor2} asymptotically determines the minimum degree threshold for forcing a directed cycle of a given length in an $n$-vertex digraph:
\begin{corollary}
    For any $k \in \mathbb N$ and any $\gamma >0$, there exists $n_0\in \mathbb N$ such that the following holds. 
    Suppose that $n \geq n_0$ and let $C$ be a directed cycle of length $\ceil{\frac{n}{k+1}}<|C|\leq \ceil{\frac{n}{k}}$.
    If $G$ is a digraph on $n\geq n_0$ vertices with $\delta(G)\geq (1+\frac{1}{k+1}+\gamma )n$, then $G$ contains $C$. On the other hand, for every $n\geq k+1$  there is an $n$-vertex digraph $G_1$ with $\delta(G_1)= n+ \floor{\frac{n}{k+1}}-2 $ that does not contain $C$.
\end{corollary}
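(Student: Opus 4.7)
The corollary has two independent halves (a positive existence half and a negative extremal half), and both appear to be short consequences of material already in the excerpt.

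For the positive half I would simply invoke Theorem~\ref{cor2} with the same $k$ and $\gamma$. The hypothesis on $\delta(G)$ matches that of Theorem~\ref{cor2} verbatim, and the conclusion there guarantees that $G$ contains every oriented cycle on at most $n$ vertices, except possibly the directed cycles of length more than $\ceil{n/k}$. Since $|C|\leq \ceil{n/k}$ by hypothesis, $C$ lies outside this exceptional class and is therefore embedded in $G$. This step is essentially immediate once Theorem~\ref{cor2} is granted.

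For the negative half I would use exactly the blow-up already described in the paragraph preceding the corollary: let $G_1$ be obtained from a transitive tournament on vertices $v_1,\dots,v_{k+1}$ by replacing $v_i$ with a complete digraph $D_i$ on $n_i$ vertices, where each $n_i\in\{\floor{n/(k+1)},\ceil{n/(k+1)}\}$ and the $n_i$ sum to $n$ (choose exactly $n-(k+1)\floor{n/(k+1)}$ of the parts to have the larger size); orient every cross-part edge consistently with the linear order $D_1<\cdots<D_{k+1}$. A one-line count shows that a vertex of $D_i$ has $2(n_i-1)$ incident edges inside $D_i$ and exactly $n-n_i$ incident edges to the rest of $G_1$, giving total degree $n+n_i-2$; minimising over $i$ yields $\delta(G_1)=n+\floor{n/(k+1)}-2$, as required. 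For the cycle structure, every arc between distinct parts is monotone with respect to the linear order, so any directed cycle in $G_1$ must be contained in a single $D_i$; the longest directed cycle in $G_1$ therefore has length $\ceil{n/(k+1)}<|C|$, and in particular $C\not\subseteq G_1$.

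I do not foresee any genuine obstacle here: the positive half is a single appeal to Theorem~\ref{cor2}, and the negative half is a routine verification of the blow-up that the authors have already sketched immediately before the statement. If anything, the only mildly fiddly point is making the degree count explicit and observing that the cross-part arcs in the blow-up are forced to respect the ordering, but both of these are one-line checks.
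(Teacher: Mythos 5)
Your proposal is correct and matches the paper's (implicit) argument exactly: the positive half is the direct application of Theorem~\ref{cor2}, and the negative half is precisely the verification of the blow-up $G_1$ that the paper sketches in the paragraph preceding the corollary, with the same degree count $n+n_i-2$ and the same observation that cross-part arcs respect the linear order so directed cycles are confined to a single part.
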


\subsection{Directed $2$-factors}

 Recall that the moreover part of Theorem~\ref{cor2} implies that, without the assumption of strong connectivity, the minimum
degree threshold for forcing 
 a directed Hamilton cycle in an $n$-vertex digraph $G$ is $n+\floor{\frac{n}{2}}-1$.
 The following observation generalizes this fact.  
 Note that a \emph{directed $2$-factor} in a digraph $G$ is a collection of vertex-disjoint directed cycles that together cover all vertices in $G$.

\begin{observation}\label{obs:2factor}
Given any $k\in \mathbb N$, if $G$ is a digraph on $n\geq 2(k+1)$ vertices with $\delta(G)\geq n+\floor{\frac{n}{k+1}}-1$, then every strongly connected component $H$ of $G$ has more than $\floor{\frac{n}{k+1}}\geq 2$ vertices and satisfies $\delta(H)\geq |V(H)|+\floor{\frac{n}{k+1}}-1$. Consequently, Theorem~\ref{ghthm} implies that $G$ contains a directed 2-factor with at most $k$ cycles.   
\end{observation}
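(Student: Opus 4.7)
My plan is to use the degree condition on $G$ to force each strongly connected component $H$ to be large and to have a large internal minimum degree, and then apply Theorem~\ref{ghthm} componentwise. Fix a strong component $H$, and set
\[
A := \{u \in V(G) \setminus V(H) : u \text{ has a directed path to } V(H)\}, \quad B := \{u \in V(G) \setminus V(H) : V(H) \text{ has a directed path to } u\}.
\]
Because $H$ is a maximal strongly connected subgraph, $V(H)$, $A$, and $B$ are pairwise disjoint (a vertex in $A \cap B$ would be strongly connected to $H$), and so $|V(H)| + |A| + |B| \leq n$. The key structural observation I would use is that every in-neighbor of a vertex $v \in V(H)$ lies in $V(H) \cup A$, and every out-neighbor of $v$ lies in $V(H) \cup B$.

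From this observation, two estimates fall out. First, $d_G(v) \leq 2(|V(H)|-1) + |A| + |B| \leq n + |V(H)| - 2$; combined with $\delta(G) \geq n + \floor{\frac{n}{k+1}} - 1$ this forces $|V(H)| \geq \floor{\frac{n}{k+1}} + 1$, and the hypothesis $n \geq 2(k+1)$ ensures $\floor{\frac{n}{k+1}} \geq 2$. Second, since the in- and out-degrees of $v$ inside $H$ lose at most $|A|$ and $|B|$ respectively compared with $d_G(v)$, I get
\[
\delta(H) \geq \delta(G) - (|A| + |B|) \geq \delta(G) - (n - |V(H)|) \geq |V(H)| + \floor{\frac{n}{k+1}} - 1,
\]
which is exactly the claimed bound on $\delta(H)$.

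With both bounds in place, $H$ is strongly connected on at least three vertices and satisfies $\delta(H) \geq |V(H)|$, so Theorem~\ref{ghthm} supplies a directed Hamilton cycle in $H$. Taking the union of these cycles across all strong components of $G$ yields a directed $2$-factor. Since every component satisfies $|V(H)| \geq \floor{\frac{n}{k+1}} + 1 > \frac{n}{k+1}$, the number of components is strictly less than $k+1$, hence at most $k$. The only step that carries any real content is the structural observation about where in- and out-neighbors of $V(H)$ can lie; everything else is direct substitution into the degree hypothesis and a componentwise application of Theorem~\ref{ghthm}, so I do not anticipate a genuine obstacle.
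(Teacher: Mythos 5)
Your proof is correct, and since the paper states this as an Observation without supplying an argument, your condensation/degree-counting on the strong components (bounding $d_G(v)$ by $2(|V(H)|-1)+|A|+|B|$ and noting in-neighbors outside $H$ lie in $A$, out-neighbors in $B$) is exactly the intended reasoning. All the numerical deductions check out, so nothing further is needed.
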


Note that the minimum degree condition in Observation~\ref{obs:2factor} is tight as witnessed by the digraph $G_1$ defined after Corollary~\ref{corlab}.


\subsection*{Organization of the paper}
In Section~\ref{sec:structure} we state and prove one of our key auxiliary results, Proposition~\ref{prop:structure}, which  essentially is a `robust' version of Observation~\ref{obs:2factor}. 
In Section~\ref{sec:embed} we then show that the structure obtained from Proposition~\ref{prop:structure} allows one to prove Theorem~\ref{thm:main}. In Section~\ref{sec:conc}, we end the paper with a few concluding remarks and open questions.
 Note that we defer the proof Theorem~\ref{cor2}, as well as Theorem~\ref{thm:link}, to the appendix.

\subsection*{Notation}
Throughout, $\mathbb N$ denotes the set of positive integers (i.e., it does not contain $0$).

Let $G$ be a digraph. We define $|G|:=|V(G)|$ and $e(G):=|E(G)|$. Given $x \in V(G)$, we write $N^+_G(x)$ for the \emph{out-neighborhood of $x$ in $G$} and write $N^-_G(x)$ for the \emph{in-neighborhood of $x$ in $G$}.
Thus, $|N^+_G(x)|=d^+_G(x)$ and $|N^-_G(x)|=d^-_G(x)$. Given $Y\subseteq V(G)$ we define
$N^+_G(x,Y):= N^+_G(x) \cap Y$ and 
$N^-_G(x,Y):= N^-_G(x) \cap Y$.
Set $d^+_G(x,Y):=|N^+_G(x,Y)|$ and 
$d^-_G(x,Y):=|N^-_G(x,Y)|$, and let 
$d_G(x,Y):=d^+_G(x,Y)+ d^-_G(x,Y)$. We define $d_G(x):=d_G(x,V(G))$.

Given distinct $x,y \in V(G)$, we write $xy$  for the edge directed from $x$ to $y$ in $G$; so $xy$ and $yx$ are different edges. We say that $x$ sends a \emph{double edge} to $y$ in $G$ if $xy, yx \in E(G)$.

Given subsets $A,B\subseteq V(G)$ (not-necessarily disjoint), let $E^+_G(A,B)$ 
be the set of all $xy\in E(G)$ such that $x\in A$ and $y\in B$. 
We set $E^-_G(A,B):= E^+_G(B,A)$ and 
let $E_G(X,Y):=E^+ _G(X,Y)\cup E^-_G(X,Y)$.  
Let $e^+_G(X,Y):=|E^+_G(X,Y)|$, $e^-_G(X,Y):=|E^- _G(X,Y)|$, and $e_G(X,Y)=|E_G(X,Y)|$.


Given $X \subseteq V(G)$, we write $G[X]$ for the subdigraph of $G$ induced by $X$. We write $G\setminus X$ for the  subdigraph of $G$ induced by $V(G)\setminus X$.

Given a digraph $G$, a \textit{sink} is a vertex of out-degree $0$ and a \textit{source} is a vertex of in-degree $0$.  
Given an oriented cycle $C$, a vertex $v\in V(C)$ is a \emph{switch} if it is either a source or a sink in $C$. 
A digraph is \emph{strongly connected} if, for every ordered pair of distinct vertices $x,y \in V(G)$, there is a directed path from $x$ to $y$ in $G$.

When we state that a digraph $G$ contains \emph{every orientation of a Hamilton cycle}, we mean it contains every oriented cycle on $|G|$ vertices.

Throughout the paper, we
omit all floor and ceiling signs whenever these are not crucial.
The constants in the hierarchies used to state our results are chosen from right to left.
For example, if we claim that a result holds whenever $0< a\ll b\ll c\le 1$, then 
there are non-decreasing functions $f:(0,1]\to (0,1]$ and $g:(0,1]\to (0,1]$ such that the result holds
for all $0<a,b,c\le 1$  with $b\le f(c)$ and $a\le g(b)$.  
Note that $a \ll b$ implies that we may assume in the proof that, e.g., $a < b$ or $a < b^2$.

\section{Partitioning into robust expanders}\label{sec:structure}

In this section we will make use of the concept of \emph{robust expansion}, a notion introduced by K\"uhn, Osthus and Treglown~\cite{robust} that has found applications to a range of graph embedding problems.

Let $0<\nu\leq \tau <1/2$ and let $G$ be a digraph on $n$ vertices.  For $S\subseteq V(G)$, define $RN^+_\nu(S):=\{v \in V(G): d_G^-(v, S)\geq \nu n\}$ to be the \emph{$\nu$-robust out-neighborhood} of $S$.  We say that $G$ is a \emph{robust $(\nu, \tau)$-{outexpander}} if $|RN^+_\nu(S)|\geq |S|+\nu n$ for all $S\subseteq V(G)$ with $\tau n\leq |S|\leq (1-\tau)n$. 

The following structural result can be viewed as a robust version of Observation~\ref{obs:2factor}.  
\begin{proposition}\label{prop:structure}
Let $k\in \mathbb N$ and let $0<1/n_0 \ll \nu\ll\tau\ll\alpha\ll\zeta \ll 1/k$. If $G$ is a digraph on $n\geq n_0$ vertices with $\delta(G)\geq (1+\frac{1}{k+1}+\zeta)n$, then there exists a partition $\{V_1, \dots, V_t\}$ of $V(G)$ such that:
\begin{enumerate}
\item for all $i\in [t]$, $|V_i|\geq (\frac{1}{k+1}+\frac{\zeta}{2})n$, and consequently $t\leq k$;
\item for all $i\in [t]$, $G[V_i]$ is a robust $(\nu,\tau)$-outexpander with $\delta(G[V_i])\geq (1+\frac{1}{k+1}+\frac{\zeta}{2})|V_i|$;
\item if $t\geq 2$, then for all $1\leq i<j\leq t$, $e^-_G(V_i, V_j)> \frac{n^2}{(k+1)^2}$; 
\item if $t\geq 2$, then $\displaystyle \sum_{1\leq i<j\leq t}e^+_G(V_i, V_j)\leq \alpha \sum_{1\leq i<j\leq t}|V_i||V_j|\leq \alpha n^2$.
\end{enumerate}
\end{proposition}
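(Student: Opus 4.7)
The plan is to construct the partition iteratively. Starting from the trivial partition $\Pi = \{V(G)\}$, I maintain the invariants (i)--(iv) throughout (where (iii) and (iv) are vacuous for $t=1$). At each stage, if every current part induces a robust $(\nu, \tau)$-outexpander, we stop; otherwise I locate a part $P$ that does not, split it into two pieces via the Splitting Lemma below, and update $\Pi$ by replacing $P$ with its two pieces, placed in the order that makes the sparse cut point forward. Since each final part has size at least $(\tfrac{1}{k+1} + \tfrac{\zeta}{2})n$, at most $k - 1$ splits can occur.

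The key technical step is the \textbf{Splitting Lemma}: if $P \subseteq V(G)$ satisfies $|P| \geq (\tfrac{1}{k+1} + \tfrac{\zeta}{2})n$ and $\delta(G[P]) \geq (1 + \tfrac{1}{k+1} + \tfrac{\zeta}{2})|P|$, and $G[P]$ is not a robust $(\nu, \tau)$-outexpander, then $P$ admits a partition $\{A, B\}$ with $|A|, |B| \geq (\tfrac{1}{k+1} + \tfrac{\zeta}{2})n$, with $\delta(G[A]) \geq (1 + \tfrac{1}{k+1} + \tfrac{\zeta}{2})|A|$ and the analogue for $G[B]$, and with $e^+_G(A, B) \leq \alpha' n^2$ for some $\alpha' \ll \alpha / k$. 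To prove it, first extract a non-expansion witness: a set $S \subseteq P$ with $\tau |P| \leq |S| \leq (1-\tau)|P|$ and $|RN^+_\nu(S)| < |S| + \nu|P|$, taken inside $G[P]$. Writing $T = RN^+_\nu(S)$ yields the crucial bound $e^+_{G[P]}(S, P \setminus T) \leq \nu|P|^2$. Tentatively set $A = S$ and $B = P \setminus S$, and run a cleanup procedure that iteratively moves vertices across the bipartition in order to enforce the minimum-degree-in-own-side property; this terminates by monotone decrease of a suitable potential (e.g., $e^+_G(A, B)$ together with a count of degree-deficient vertices). The size bound then follows from an averaging argument on $e^+_G(A, B) \leq \alpha' n^2$: some $v \in A$ has $d^+_G(v, B) \ll \zeta n / 2$, and combining with $d_G(v) \geq (1 + \tfrac{1}{k+1} + \zeta)n$ and $d_G(v, A) \leq 2(|A| - 1)$ forces $|A| \geq (\tfrac{1}{k+1} + \tfrac{\zeta}{2})n$, and symmetrically for $B$.

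Iterating the Splitting Lemma delivers the partition $\{V_1, \ldots, V_t\}$. Condition (iv) follows by summing the per-split forward-cut bounds of $\alpha' n^2$ over at most $k-1$ splits, giving a total at most $(k-1)\alpha' n^2 \leq \alpha n^2$ by $\alpha' \ll \alpha/k$. Condition (iii) is verified pair-by-pair via a counting argument: the hypothesis $\delta(G) \geq (1 + \tfrac{1}{k+1} + \zeta)n$ implies $\delta^+(G) \geq n/(k+1) + \zeta n$, so for a pair $V_i, V_j$ with $i < j$ the total out-degree from $V_j$ exceeds what the internal edges of $V_j$ together with the sparse forward cut into later parts can absorb; hence many out-edges from $V_j$ must point to earlier parts, and a case analysis in the worst configurations distributes them to yield $e^+_G(V_j, V_i) > n^2/(k+1)^2$ for each $i < j$.

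The main obstacle is the cleanup procedure in the Splitting Lemma, which must simultaneously enforce (a) the minimum-degree ratio on both sides, (b) the sparseness of the forward cut $e^+_G(A, B)$, and (c) the size lower bounds on $|A|$ and $|B|$. These pull against each other because moving a single vertex rearranges many edges across the cut, and the hierarchy $\nu \ll \tau \ll \alpha \ll \zeta \ll 1/k$ must be carefully exploited: $\nu$ must be chosen so that the cumulative cost of cleanup moves does not spoil the cut sparseness or the size bounds. A secondary subtlety is verifying condition (iii) uniformly over pairs in the most constrained configurations, where all parts sit close to the minimum size $(\tfrac{1}{k+1} + \tfrac{\zeta}{2})n$; there the proof exploits that with all parts near the minimum, the minimum-degree hypothesis forces dense cross-traffic in both directions between parts, which together with the sparseness of forward edges yields the required per-pair backward-edge bound.
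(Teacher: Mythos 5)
Your overall strategy coincides with the paper's: iteratively split any part that fails to be a robust outexpander along a sparse cut, clean up the cut to restore degree conditions, sum the per-split sparse cuts for (iv), and prove (iii) by a degree count. However, two steps in your Splitting Lemma have genuine gaps.

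First, your tentative cut $(S,P\setminus S)$ is not shown to be sparse. The bound $e^+_{G[P]}(S,P\setminus T)\leq \nu|P|^2$ controls only the edges from $S$ into $P\setminus T$; the edges from $S$ into $C:=T\setminus S$ are unaccounted for, and a priori $|C|$ can be as large as $|S\setminus T|+\nu|P|$, i.e.\ comparable to $|S|$, so $e^+(S,C)$ could be $\Theta(|S||C|)$ and the cut dense. The paper's Lemma~\ref{lem:sparse} closes exactly this hole: a double count of $\sum_{v\in A}d_G(v)$ for $A=S\setminus RN^+_\nu(S)$, using the minimum degree hypothesis, forces $|A|$ (and hence $|C|$) to be tiny unless $(S,P\setminus S)$ is already $\alpha$-sparse. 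Without that counting argument (or a concrete replacement inside your ``cleanup procedure,'' which as stated is only an assertion that some potential decreases), the Splitting Lemma is unproved.

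Second, your derivation of the size lower bound $|A|\geq(\tfrac{1}{k+1}+\tfrac{\zeta}{2})n$ only works for the first split. If $v\in A$ has few out-edges to $B$, then $d_G(v)\leq 2(|A|-1)+|B|+2(n-|P|)+o(n)$, which combined with $d_G(v)\geq(1+\tfrac{1}{k+1}+\zeta)n$ gives $|A|\geq(\tfrac{1}{k+1}+\zeta)n-(n-|P|)-o(n)$; this is vacuous once $|P|$ drops below $(1-\tfrac{1}{k+1})n$. The relative invariant $\delta(G[P])\geq(1+\tfrac{1}{k+1}+\tfrac{\zeta}{2})|P|$ that you carry only yields $|A|\gtrsim|P|/(k+1)$, which can be far smaller than $n/(k+1)$. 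This is why the paper's Lemma~\ref{lem:clean} maintains the extra invariant (its condition (iii)/(C3)) that almost every $v\in X$ satisfies $d_G(v,X)\geq |X|+(\tfrac{1}{k+1}+\zeta-o(1))n$ --- an \emph{absolute} excess measured against $n$ --- which does propagate the size bound through every level of the recursion. Your iteration needs this (or an equivalent) additional invariant, together with the nested parameter scales (the paper uses $\alpha_b=\alpha^{2^{k-b}}$) to absorb the degradation each cleanup inflicts on the sparseness and degree constants.
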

Note that Proposition~\ref{prop:structure} implies that an $n$-vertex digraph $G$ with $\delta(G)>(1+\frac{1}{k+1}+o(1))n$ is either a robust outexpander  (if $t=1$) or has structure resembling that of the example described after Observation~\ref{obs:2factor}. More precisely, if $t\geq 2$ in Proposition~\ref{prop:structure},  the vertex classes $V_i$  are of reasonable size and induce robust outexpanders of large minimum degree (rather than complete digraphs). Further, the edges between vertex classes are  oriented approximately as in the blow-up of a transitive tournament. 

Roughly speaking, K\"uhn, Osthus and Treglown~\cite{robust}
proved that if  a  sufficiently large digraph $G$ is a robust outexpander with linear minimum semi-degree, then $G$ contains a directed Hamilton cycle. Moreover, Taylor~\cite{Tay} proved that such a digraph $G$ actually contains every orientation of a Hamilton cycle (see Theorem~\ref{thm:ham} in Section~\ref{sec:embed}).

This latter result is thus very useful when combined with Proposition~\ref{prop:structure}. Indeed, given a digraph as in Theorem~\ref{thm:main}, one may apply Proposition~\ref{prop:structure} (with $k\in \mathbb N$ chosen so that $1/(k+1) \leq \eta /2$ say) to obtain the partition $\{V_1,\dots, V_t\}$ of $V(G)$. Given an arbitrary orientation $C$ of an $n$-vertex cycle, Taylor's theorem allows one to find a segment of $C$ in each $G[V_i]$ covering all of $V_i$. The challenge therefore is to combine these segments into a copy of $C$ in $G$. This will be achieved in Section~\ref{sec:embed}.

Given a digraph $G$ and a partition $\{X_1, X_2\}$ of $V(G)$, we say that $(X_1, X_2)$ is an \emph{$\alpha$-sparse cut} if $e^+_G(X_1, X_2)\leq \alpha |X_1||X_2|$ and $X_1, X_2$ are non-empty.   
The following lemma says that if $G$ is a digraph on $n$ vertices with $\delta(G)> (1+o(1))n$, then there is only one way for $G$ to fail to be a robust outexpander.  

\begin{lemma}\label{lem:sparse}
Let $0<\nu, \tau, \alpha, \eta<1$ such that $\tau<{1}/{2}$ and $\nu\leq {\alpha \tau \eta}/{4}$, and let $G$ be a digraph on $n$ vertices with $\delta(G)\geq (1+\eta)n$.  If $G$ has no $\alpha$-sparse cut, then $G$ is a robust $(\nu,\tau)$-outexpander. 
\end{lemma}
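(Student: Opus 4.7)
The plan is to proceed by contrapositive. Assume $G$ is not a robust $(\nu,\tau)$-outexpander, so there exists $S \subseteq V(G)$ with $\tau n \le |S| \le (1-\tau)n$ and $|T| < |S|+\nu n$, where $T := RN^+_\nu(S)$ and $U := V\setminus T$. I need to exhibit an $\alpha$-sparse cut. The starting observation is immediate from the definition of $T$: each $v \in U$ has $d^-(v,S)<\nu n$, so $e^+_G(S,U) < \nu n|U|$.

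First I would use the minimum degree condition to derive a lower bound on $|T|$ (and hence $|S|$). Summing $d(v)\ge(1+\eta)n$ over $v\in S$ gives $\sum_{v\in S}d(v)\ge(1+\eta)n|S|$, and this equals $e^+_G(S,V)+e^+_G(V,S)$. Each of these pieces can be bounded by splitting the sum according to membership in $T$ vs.\ $U$: writing $e^+_G(S,V)=\sum_{v\in V}d^-(v,S)\le|T||S|+\nu n|U|$, and similarly for $e^+_G(V,S)$ after a parallel decomposition, yields
\[
(1+\eta)n|S| \le 2|T||S| + 2\nu n|U|.
\]
Dividing by $|S|\ge\tau n$ gives $|T|\ge(1+\eta)n/2-\nu n/\tau$. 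The hierarchy $\nu\le\alpha\tau\eta/4$ absorbs the error term and forces $|T|\ge n/2+\eta n/3$; symmetrically $|S|\ge n/2+\eta n/3$. Consequently $|T\cap S|\ge|S|+|T|-n\ge \eta n/2$ and $|T\setminus S|\le n-|S|$.

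Next I would produce the sparse cut. The natural candidate is $(X_1,X_2)=(T,U)$: decomposing,
\[
e^+_G(T,U) \le e^+_G(S\cap T,U) + e^+_G(T\setminus S,U) \le e^+_G(S,U) + |T\setminus S|\cdot|U| < (\nu n+|T\setminus S|)|U|,
\]
so $(T,U)$ is $\alpha$-sparse provided $\nu n+|T\setminus S|\le\alpha|T|$. Using $|T\setminus S|\le n-|S|$ and the lower bound on $|T|$, this reduces to a numerical check that goes through when $|S|$ is close to $n$. In the complementary regime (where $|S|$ is closer to $n/2$), I'd switch to the cut $(T\cap S, V\setminus(T\cap S))$. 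Here the decomposition yields
\[
e^+_G(T\cap S, V\setminus(T\cap S)) \le 2\nu n|U| + |T\cap S|\cdot|T\setminus S|,
\]
and the lower bound $|T\cap S|\ge\eta n/2$ from the min-degree step lets the hierarchy $\nu\le\alpha\tau\eta/4$ control the $\nu n|U|/|T\cap S|$ error term, producing an $\alpha$-sparse cut in this regime.

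The \textbf{main obstacle} is the final numerical verification: ensuring that one of the two candidate cuts is indeed $\alpha$-sparse for every possible value of $|S|$ in the admissible range. The hierarchy $\nu\le\alpha\tau\eta/4$ is chosen precisely so that the error terms proportional to $\nu n$, $\nu n/\tau$, and $\nu n|U|/|T\cap S|$ are all dominated by the corresponding main terms, so that together the two cuts cover the full parameter range.
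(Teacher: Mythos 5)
Your proposal has a genuine gap, and it starts at the very first quantitative step. The identity $\sum_{v\in S}d(v)=e^+_G(S,V)+e^+_G(V,S)$ is fine, and the decomposition $e^+_G(S,V)\le |T||S|+\nu n|U|$ is correct, but there is no ``parallel decomposition'' for $e^+_G(V,S)$: the definition of $T=RN^+_\nu(S)$ controls only edges \emph{leaving} $S$ (the in-degrees $d^-(v,S)$), and says nothing about edges entering $S$. Consequently the inequality $(1+\eta)n|S|\le 2|T||S|+2\nu n|U|$, and hence the conclusions $|T|\ge n/2+\eta n/3$ and $|T\cap S|\ge \eta n/2$, are false. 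Concretely, let $G$ be two complete digraphs on $n/2$ vertices with all edges added from the first to the second, and let $S$ be the second clique: then $\delta(G)=\lfloor 3n/2\rfloor-2\ge(1+\eta)n$, expansion fails at $S$, and $T=S$ has size $n/2$, contradicting your claimed lower bound for any $\eta$ bounded away from $0$.

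Even ignoring that, neither candidate cut is salvaged by what you have. For both $(T,U)$ and $(T\cap S,V\setminus(T\cap S))$ the dangerous term is $|T\setminus S|\cdot|U|$ (resp.\ $|T\cap S||T\setminus S|$), and you bound $|T\setminus S|$ only by $n-|S|$, which can be as large as $(1-\tau)n$; the lemma does not assume $\tau\le\alpha$, and for $|S|$ near $\tau n$ or $n/2$ the required inequality $\nu n+|T\setminus S|\le\alpha|T|$ simply fails. What is missing is the paper's key idea: look at $A:=S\setminus T$, the vertices of $S$ \emph{outside} the robust out-neighborhood. Failure of expansion gives $|T\setminus S|<|A|+\nu n$, so it suffices to show $|A|$ is tiny. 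This is done by double-counting $\sum_{v\in A}d(v)$: each $v\in A$ has $d^-(v,S)<\nu n$, the out-edges of $A$ inside $A$ equal the in-edges of $A$ inside $A$, and edges from $A$ into $D=V\setminus(S\cup T)$ are controlled because $D\cap T=\emptyset$; together with $\delta(G)\ge(1+\eta)n$ this forces $(\eta-3\nu)|A|<\nu|D|\le\nu n$, i.e.\ $|A|=O(\nu n/\eta)$. Only then is $|T\setminus S|$ small enough that $(S,V\setminus S)$ (equivalently, your cuts) is an $\alpha$-sparse cut. Without some version of this degree count on $S\setminus T$, the argument cannot close.
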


\begin{proof}
Suppose $G$ has no $\alpha$-sparse cut, and suppose for a contradiction  there exists $S\subseteq V(G)$ with $\tau n\leq |S|\leq (1-\tau)n$ such that $|RN_\nu^+(S)|<|S|+\nu n$.  Let $A:=S\setminus RN_\nu^+(S)$, $B:=S\cap RN_\nu^+(S)$, $C:=RN_\nu^+(S)\setminus S$, and $D:=V(G)\setminus (A\cup B\cup C)$.  

First note that we have 
\begin{equation}\label{eq:CA}
(\alpha\tau-\nu) n<|C|<|A|+\nu n
\end{equation}
where the upper bound holds by the assumption on $S$; the lower bound holds because if $|C|\leq (\alpha\tau-\nu)n$, then we would have
\begin{align*}
e^+_G(S, C\cup D)\leq |S||C|+\nu n(n-|S|)&=\left(\frac{|C|}{n-|S|}+\frac{\nu n}{|S|}\right)|S|(n-|S|)\\
&\leq \left(\frac{\alpha\tau-\nu}{\tau}+\frac{\nu}{\tau}\right)|S|(n-|S|)= \alpha |S|(n-|S|), 
\end{align*}
a contradiction to the fact that there is no $\alpha$-sparse cut.

We will  obtain a contradiction by counting the sum of the degrees of the vertices in $A$ in two different ways.
  We have 
\begin{align*}
|A|(1+\eta)n\leq \sum_{v\in A}d_G(v)&=\sum_{v\in A}d^+_G(v,A)+\sum_{v\in A}d^-_G(v, A\cup B)+e^+_G(A, B)+e_G(A,C)+e_G(A,D)\\
&=\sum_{v\in A}d^-_G(v,A)+\sum_{v\in A}d^-_G(v, A\cup B)+e^+_G(A, B)+e_G(A,C)+e_G(A,D)\\ 
&< 2\nu n|A|+|A||B|+2|A||C|+|D|(|A|+\nu n)\\
&=|A|(|B|+2|C|+|D|+2\nu n)+|D|\nu n\\
&\stackrel{\eqref{eq:CA}}{<} |A|(|A|+|B|+|C|+|D|+3\nu n)+|D|\nu n\\
&= |A|(1+3\nu)n+|D|\nu n.
\end{align*}
Thus, $(\eta -3 \nu)|A|< \nu |D|$
which, since $2 \nu \leq \alpha \tau /2$ and $3\nu \leq \eta /2$ implies that
$$\alpha \tau \eta n/4 \leq (\alpha\tau-2\nu)(\eta-3\nu)n\stackrel{\eqref{eq:CA}}{<} (\eta-3\nu)|A|< \nu|D|\leq \nu n,$$ a contradiction.
\end{proof}

The next technical lemma allows us to ``clean-up'' a sparse cut to ensure it has some additional properties (which will be needed to prove Proposition~\ref{prop:structure}).

\begin{lemma}\label{lem:clean}
Let $k\in \mathbb N$, let $0<\zeta<1-\frac{1}{k+1}$, and let $0<\alpha<\frac{\zeta}{24(k+1)}$.  Let $G$ be a digraph on $n$ vertices with $\delta(G)\geq (1+\frac{1}{k+1}+\zeta)n$ and let $X\subseteq V(G)$ such that 
\begin{enumerate}[label=\alabel]
\item\label{it:a} $|X|> (\frac{1}{k+1}+\frac{\zeta}{2})n$;
\item\label{it:b} $\delta(G[X])\geq (1+\frac{1}{k+1}+\zeta-\alpha)|X|$;
\item\label{it:c} $d_G(v,X)\geq |X|+(\frac{1}{k+1}+\zeta-\alpha)n$ for all but at most $\alpha^2n$ vertices $v\in X$. 
\end{enumerate}

If $G[X]$ has an $\alpha^2$-sparse cut, then $G[X]$ has an $\alpha$-sparse cut $(V_1, V_2)$ such that for all $i\in [2]$:
\begin{enumerate}[label=\ilabel]
\item\label{it:i} $|V_i|> (\frac{1}{k+1}+\frac{\zeta}{2})n$;
\item\label{it:ii} $\delta(G[V_i])\geq (1+\frac{1}{k+1}+\zeta-10(k+1)\alpha)|V_i|$;
\item\label{it:iii} $d_G(v,V_i)\geq |V_i|+(\frac{1}{k+1}+\zeta-10(k+1)\alpha)n$ for all but at most $\alpha n$ vertices $v\in V_i$.
\end{enumerate}
\end{lemma}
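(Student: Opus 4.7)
The plan is to take the given $\alpha^2$-sparse cut $(A,B)$ of $G[X]$ (WLOG with $|A|\leq |B|$) and produce $(V_1, V_2)$ by moving $O(\alpha n)$ ``misclassified'' vertices between the two sides. The argument would proceed in three steps.

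\medskip

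\textbf{Step 1: balanced parts.} The first task would be to ensure condition (i), that both $|A|, |B|$ exceed $(\frac{1}{k+1} + \zeta/2)n$ with room to spare. Condition (c) implies that all but $\alpha^2 n$ vertices $v \in X$ satisfy $d^+_G(v, X), d^-_G(v, X) \geq (\frac{1}{k+1} + \zeta - \alpha)n$ (since $d^+ + d^- \geq |X| + (\frac{1}{k+1}+\zeta-\alpha)n$ with each semi-degree capped at $|X|$). For such a (c)-good $v \in A$, this forces $d^+_{G[X]}(v, B) \geq (\frac{1}{k+1} + \zeta - \alpha)n - |A|$. Summing over these vertices and comparing with $e^+_{G[X]}(A, B) \leq \alpha^2|A||B|$ would yield a dichotomy: either $|A| = O(\alpha^2 n)$ (degenerate), or $|A|$ is substantially larger than $(\frac{1}{k+1} + \zeta/2)n$ (say, $|A| > (\frac{1}{k+1} + 3\zeta/4)n$, using $\alpha \ll \zeta$). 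We would eliminate the degenerate case by pre-processing: take an $\alpha^2$-sparse cut that maximizes $|A|$ among those with $|A| \leq |B|$; an exchange argument (moving vertices from $B$ with high in-degree from $A$) should then show the maximum must lie in the balanced range.

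\medskip

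\textbf{Step 2: counting bad vertices.} Next we would define $A^{\mathrm{bad}} \subseteq A$ as the union of the (c)-bad vertices; the \emph{(ii)-bad} vertices $v$ with $d_{G[X]}(v, A) < (1 + \frac{1}{k+1} + \zeta - 9(k+1)\alpha)|A|$; and the \emph{(iii)-bad} vertices $v$ with $d_G(v, A) < |A| + (\frac{1}{k+1} + \zeta - 9(k+1)\alpha)n$. Using the minimum degree bound (b), a (ii)-bad $v \in A$ must satisfy $d_{G[X]}(v, B) > |B| + (9(k+1)-1)\alpha|A|$, and since $d^-_{G[X]}(v, B) \leq |B|$, this forces $d^+_{G[X]}(v, B) > (9(k+1)-1)\alpha|A|$. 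Summing these out-degrees and invoking $e^+_{G[X]}(A, B) \leq \alpha^2|A||B|$ would bound the (ii)-bad count by $\alpha n/(9(k+1)-1)$. An analogous argument would handle the (iii)-bad vertices, giving $|A^{\mathrm{bad}}| \leq \alpha n/3$. The set $B^{\mathrm{bad}}$ would be defined symmetrically, where the crucial input is the identity $\sum_{v \in B} d^-_{G[X]}(v, A) = e^+_{G[X]}(A, B) \leq \alpha^2|A||B|$.

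\medskip

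\textbf{Step 3: construction and verification.} Finally, set $V_1 := (A \setminus A^{\mathrm{bad}}) \cup B^{\mathrm{bad}}$ and $V_2 := X \setminus V_1$. Since at most $\alpha n/3$ vertices move between sides, $|V_i|$ stays within $\alpha n$ of $|A|$ or $|B|$, preserving (i) given the slack from Step 1. For $\alpha$-sparsity, we would bound $e^+(V_1, V_2) \leq e^+(A, B) + O(\alpha n)\cdot |X| \leq \alpha|V_1||V_2|$, using $|V_i| = \Omega(n/(k+1))$ and $\alpha^2 \ll \alpha$. For (ii), each non-moved $v \in V_1 \cap A$ satisfies $d_{G[V_1]}(v) \geq d_{G[X]}(v, A) - 2|A^{\mathrm{bad}}|$, and the $O(\alpha n)$ gap between the constants $9(k+1)\alpha$ (in the definition of bad) and $10(k+1)\alpha$ (in the conclusion) absorbs this loss, yielding the desired bound relative to $|V_1|$. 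Vertices in $V_1 \cap B^{\mathrm{bad}}$ are handled similarly: their being bad in $B$ guarantees that they already have large degree toward $A \approx V_1$. Condition (iii) is checked by the same analysis applied to the semi-degree inequality.

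\medskip

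The main obstacle we foresee is Step 1 — specifically, ruling out the degenerate regime $|A| = O(\alpha^2 n)$ via the max-size preprocessing. The bad-vertex counting in Step 2 and the verification in Step 3 are tedious but routine, simplified by the fact that the constant in (ii), (iii) is a factor of ten above the natural threshold, leaving ample slack.
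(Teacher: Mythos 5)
Your overall architecture (rule out unbalanced cuts, bound the bad vertices using the sparsity of the cut, relocate $O(\alpha n)$ vertices, verify) is the same as the paper's, but the assignment rule in Step 3 is a genuine error. You move every vertex of $B^{\mathrm{bad}}$ to $V_1$ and justify (ii) for such a vertex $v$ by saying that being bad in $B$ ``guarantees large degree toward $A$.'' The bound your Step 2 computation actually yields for a (iii)-bad, (c)-good $v\in B$ is of the form $d_G(v,A)\geq |A|+8(k+1)\alpha n$, which is far weaker than what (ii) demands, namely $d_G(v,V_1)\geq(1+\tfrac{1}{k+1}+\zeta-10(k+1)\alpha)|V_1|\approx |A|+(\tfrac{1}{k+1}+\zeta)|A|$: here $(\tfrac{1}{k+1}+\zeta)|A|\geq \tfrac{n}{(k+1)^2}\gg 8(k+1)\alpha n$ since $\alpha$ is tiny compared with $\zeta/(k+1)$. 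A vertex that fails (iii) on its own side may still satisfy (ii) there and have essentially no degree surplus on the other side; worse, a (c)-bad vertex (about which only (b) is known) could have its degree concentrated entirely on its own side, so shipping it across the cut can destroy (ii) outright. The correct rule -- and what the paper does with its sets $X_1'',X_2''$ -- is to pool all bad vertices and assign each to the side $X_j$ on which it retains the \emph{full} minimum-degree condition, $d_G(v,X_j\setminus(X_1'\cup X_2'))\geq(1+\tfrac{1}{k+1}+\zeta-\alpha)|X_j|-O(\tfrac{\alpha n}{k+1})$; condition (b) together with the smallness of the bad set guarantees every bad vertex has this on at least one side, and only this gives (ii) for relocated vertices. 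Vertices relocated for (iii)-reasons are then simply absorbed into the $\alpha n$ exceptions that (iii) permits on their new side.

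The second issue is the degenerate regime in Step 1, which you flag but leave unresolved; the proposed ``maximise $|A|$ plus exchange argument'' is both delicate and unnecessary. The case $|A|\leq 2\alpha n$ is impossible for \emph{any} $\alpha^2$-sparse cut: by (b), each $v\in A$ has $d_{G[X]}(v)\geq(1+\tfrac{1}{k+1}+\zeta-\alpha)|X|$ while $d_{G[X]}(v,A)\leq 2|A|\leq 4\alpha n$, so $v$ sends double edges (in particular out-edges) to at least $d_{G[X]}(v,B)-|B|\geq(\tfrac{1}{k+1}+\zeta-\alpha)|X|-4\alpha n\geq \tfrac{|X|}{k+1}$ vertices of $B$, whence $e^+_G(A,B)\geq\tfrac{|A||B|}{k+1}>\alpha^2|A||B|$, a contradiction. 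With that in hand, your (c)-based dichotomy gives $|A|,|B|>(\tfrac{1}{k+1}+\zeta-2\alpha)n$ directly, exactly as in the paper, and Steps 2--3 (with the corrected assignment rule) go through.
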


Note that \ref{it:i} automatically implies that if $X$ is a set meeting the requirements \ref{it:a}, \ref{it:b}, \ref{it:c} and $(\frac{1}{k+1}+\frac{\zeta}{2})n< |X|\leq (\frac{2}{k+1}+\zeta)n$, then $X$ cannot have an $\alpha^2$-sparse cut.  

\begin{proof}
Suppose that $(X_1, X_2)$ is an $\alpha^2$-sparse cut of $G[X]$.  
By \ref{it:a} and \ref{it:b} we have $|X_i|> 2\alpha n$ for all $i\in [2]$. 
Indeed, if $|X_i|\leq 2\alpha n$, then \ref{it:b} implies that every vertex $v \in X_i$  
sends double edges to at least 
$(1/(k+1)+\zeta -\alpha)|X|-2\alpha n \geq (1/(k+1)+\zeta -\alpha-2 (k+1)\alpha )|X|\geq |X|/(k+1)$ vertices in $X_{3-i}$; but then $e^+_G(X_1,X_2)\geq |X_1||X_2|/(k+1) > \alpha ^2 |X_1||X_2|$, a contradiction as 
 $(X_1, X_2)$ is an $\alpha^2$-sparse cut of $G[X]$.

In fact, by combining $|X_i|>2\alpha n$ with \ref{it:c} we have 
\begin{align*}
   |X_i|>\left (\frac{1}{k+1}+\zeta-2\alpha \right )n \text{ for all } i\in [2].  
\end{align*}
Indeed, if $|X_i|\leq (\frac{1}{k+1}+\zeta-2\alpha)n$, then by \ref{it:c}, there are $|X_i|-\alpha^2n>\alpha n$ vertices in $X_i$ that send double edges to at least $\alpha n$ vertices in $X_{3-i}$, contradicting the fact that $(X_1, X_2)$ is an $\alpha^2$-sparse cut.  

For all $i\in [2]$, let 
$$X_i':=\left \{v\in X_i: d_G(v,X_i)\leq |X_i|+\left (\frac{1}{k+1}+\zeta-9(k+1)\alpha \right)n \right \}.$$  
We now claim that for all  $i\in [2]$,
\begin{equation}\label{eq:X'}
 |X_i'|\leq \frac{\alpha}{6(k+1)}n.  
\end{equation}
Indeed, suppose that $|X_i'|>\frac{\alpha}{6(k+1)}n$ for some $i\in [2]$.  Then by \ref{it:c}, for all but at most $\alpha^2n$ vertices $v\in X_i'$, we have 
\begin{align*}
d_G(v, X_{3-i}) & \geq |X|+\left (\frac{1}{k+1}+\zeta-\alpha \right )n-\left (|X_i|+ \left (\frac{1}{k+1}+\zeta-9(k+1)\alpha \right)n\right ) \\ & \geq |X_{3-i}|+8(k+1)\alpha n,\end{align*}
which implies that
$$e^+_G(X_1, X_2)\geq (|X_i'|-\alpha^2 n)\cdot 8(k+1)\alpha n>\frac{\alpha}{8(k+1)}n\cdot 8(k+1)\alpha n = \alpha^2 n^2\geq \alpha^2|X_1||X_2|,$$
contradicting the fact that $(X_1, X_2)$ is an $\alpha^2$-sparse cut of $G[X]$.

Now set 
$$X_2'':=\left \{v\in X_1'\cup X_2': d_G(v,X_2\setminus (X_1'\cup X_2'))\geq \left (1+\frac{1}{k+1}+\zeta-\alpha \right )|X_2|-\frac{\alpha}{6(k+1)}n\right \},$$ 
and let $X_1'':=(X_1'\cup X_2')\setminus X_2''$.  
Note that by \ref{it:b}, \eqref{eq:X'} and the definition of $X_1''$, we have that for all $v\in X_1''$, 
$$d_G(v, X_1\setminus (X_1'\cup X_2'))\geq \left (1+\frac{1}{k+1}+\zeta-\alpha \right )|X_1|-\frac{\alpha}{6(k+1)}n.$$  
Set $V_1:=(X_1\setminus X_1')\cup X_1''$ and $V_2:=(X_2\setminus X_2')\cup X_2''$. Note that for all $i\in [2]$, we have 
\begin{equation}\label{eq:XiVi}
|X_i|-\frac{\alpha}{6(k+1)}n\leq |V_i|\leq |X_i|+\frac{\alpha}{6(k+1)}n.  
\end{equation}
Since $|X_i|>(\frac{1}{k+1}+\zeta-2\alpha)n$, \eqref{eq:XiVi} implies that 
\begin{equation}\label{eq:Vi}
|V_i|>\left (\frac{1}{k+1}+\frac{\zeta}{2}\right )n, 
\end{equation}
and thus \ref{it:i} is satisfied.

If $v \in X_i \setminus X'_i $, then 
by definition of $X'_i$,
$d_G(v,X_i) \geq \left (1+\frac{1}{k+1}+\zeta-9(k+1)\alpha \right )|X_i|$ and so by  \eqref{eq:XiVi} we have that 
$d_G(v,V_i) \geq \left (1+\frac{1}{k+1}+\zeta-9(k+1) \alpha \right )|X_i| -\frac{\alpha }{6(k+1)}n$. Further, if  $v \in X''_i  $, then 
$d_G(v,V_i) \geq \left (1+\frac{1}{k+1}+\zeta- \alpha \right )|X_i| -\frac{\alpha}{ 6(k+1)}n$.
Thus, for all $i\in [2]$, 
\begin{align*}
\delta(G[V_i])& \geq \left (1+\frac{1}{k+1}+\zeta-9(k+1) \alpha \right )|X_i|-\frac{\alpha}{6(k+1)}n \\
&\stackrel{\eqref{eq:XiVi}}\geq \left (1+\frac{1}{k+1}+ \zeta-9(k+1)\alpha \right) \left (|V_i|-\frac{\alpha}{6(k+1)}n \right )-\frac{\alpha}{6(k+1)}n\\
&\geq \left (1+\frac{1}{k+1}+\zeta-9(k+1)\alpha \right )|V_i|-\frac{\alpha}{2(k+1)}n\\
&\stackrel{\eqref{eq:Vi}}{\geq} \left (1+\frac{1}{k+1}+\zeta-10(k+1)\alpha \right )|V_i|,
\end{align*}
so \ref{it:ii} is satisfied.  

By the definition of $X_i'$ and \eqref{eq:X'} we have that for all $i\in [2]$, all but at most $|X_1'|+|X_2'|\leq \frac{\alpha}{3(k+1)}n< \alpha n$ vertices $v\in V_i$ have 
\begin{align*}
d_G(v,V_i) & \stackrel{\eqref{eq:XiVi}}{\geq}
|X_i|+\left (\frac{1}{k+1}+\zeta-9(k+1)\alpha \right )n- \frac{\alpha}{6(k+1)}n \\ &
\stackrel{\eqref{eq:XiVi}}{\geq}
|V_i|+\left (\frac{1}{k+1}+\zeta-9(k+1)\alpha \right )n-\frac{\alpha}{3(k+1)}n \\ & >|V_i|+\left (\frac{1}{k+1}+\zeta-10(k+1)\alpha \right)n,
\end{align*}
and thus \ref{it:iii} is satisfied.  

Finally, we have that $(V_1, V_2)$ is an $\alpha$-sparse cut since 
\begin{align*}
e^+_G(V_1, V_2)&\leq  e^+_G(X_1, X_2)+|X_1''||X_2''|+|X_1||X_2''|+|X''_1||X_2| \\
& \leq \alpha^2|X_1||X_2|
+\frac{\alpha^2}{36(k+1)^2}n^2
+\frac{\alpha }{3(k+1)}n |X_1|+ 
\frac{\alpha }{3(k+1)}n |X_2|
\\
&\leq 2 \alpha^2 |V_1||V_2|+ \alpha^2 |V_1||V_2| +2\alpha |V_1||V_2|/5+2\alpha |V_1||V_2|/5
\\ & \leq \alpha |V_1||V_2|,
\end{align*}
where the penultimate inequality uses   \eqref{eq:XiVi} and \eqref{eq:Vi} in the form $n<(k+1)|V_i|$ and consequently $|X_i|<(1+\frac{\alpha}{6})|V_i|$ for all $i\in [2]$. 
\end{proof}

We are now ready to prove Proposition~\ref{prop:structure}.

\begin{proof}[Proof of Proposition \ref{prop:structure}]
Let $G$ be an $n$-vertex digraph where $n\geq n_0$ and  $\delta(G)\geq (1+\frac{1}{k+1}+\zeta)n$. The proof proceeds by running an iterative procedure to obtain the desired partition $\{V_1,\dots, V_t\}$ of $V(G)$.
Set $\alpha _b:= \alpha ^{2^{k-b}}$ for $0\leq b\leq k$. Note that $\nu \ll \tau \ll \alpha_b \ll \zeta \ll 1/k$ for any $0\leq b\leq k$; this hierarchy will allow us to apply both Lemma~\ref{lem:sparse} and Lemma~\ref{lem:clean} throughout the procedure (with $\alpha _b$ playing the role of $\alpha$). 

By Lemma~\ref{lem:sparse}, either $G$ is a robust $(\nu, \tau)$-outexpander or $G$ has an $\alpha_0$-sparse 
cut.  In the former case, conditions (i) and (ii) of Proposition~\ref{prop:structure} hold (with $t=1$), so we are done. 

Thus, suppose $G$ has an $\alpha_0$-sparse cut. By Lemma~\ref{lem:clean}, there exists an $\alpha_1$-sparse cut $(V^1 _1, V^1 _2)$ of $G$ satisfying conditions \ref{it:i}--\ref{it:iii} of the lemma. 
That is, for $i \in [2]$:
\begin{itemize}
    \item $|V^1_i|> (\frac{1}{k+1}+\frac{\zeta}{2})n$;
\item $\delta(G[V^1_i])\geq (1+\frac{1}{k+1}+\zeta-10(k+1)\alpha_1)|V^1_i|\geq (1+\frac{1}{k+1}+\zeta-\alpha_2)|V^1_i|$;
\item $d_G(v,V^1_i)\geq |V^1_i|+(\frac{1}{k+1}+\zeta-10(k+1)\alpha_1)n \geq |V^1_i|+(\frac{1}{k+1}+\zeta-\alpha_2)n$ for all but at most $\alpha _1 n=\alpha ^2 _2 n$ vertices $v\in V^1_i$.
\end{itemize}
If both $G[V^1 _1]$ and $G[V^1 _2]$ are robust $(\nu, \tau)$-outexpanders then we set $V_1:=V^1_1$ and $V _2:=V^1 _2$ and terminate the procedure.
Otherwise, we will continue the iterative procedure as follows.

At each subsequent step of the procedure we will begin with a partition $\{V^b_1, \dots, V^b_a \}$ of $V(G)$
such that, for each $ i \in [a]$: 
\begin{itemize}
    \item[(C1)] $|V^b_i|> (\frac{1}{k+1}+\frac{\zeta}{2})n$;
\item[(C2)] $\delta(G[V^b_i])|\geq (1+\frac{1}{k+1}+\zeta-\alpha_{b+1})|V^b_i|$;
\item[(C3)] $d_G(v,V^b_i) \geq |V^b_i|+(\frac{1}{k+1}+\zeta-\alpha_{b+1})n$ for all but at most $\alpha _{b} n=\alpha ^2 _{b+1} n$ vertices $v\in V^b_i$.
\end{itemize}
We then proceed as follows:
For each class $V^b _i$ such that $G[V^b_i]$ is a robust $(\nu, \tau)$-outexpander, we will perform no further partition of $V^b _i$; that is, $V^b _i$ will be one of the final classes in the partition $\{V_1,\dots, V_t\}$ of $V(G)$.
So in particular, if all classes $V^b _i$ have this property then we terminate the procedure.

Otherwise, for any other class  $V^b _i$ we will have that 
$G[V^b _i]$ has an $\alpha_{b+1} ^2$-sparse cut  by Lemma~\ref{lem:sparse}. 
We can then apply Lemma~\ref{lem:clean} to $X:=V^b _i$ to obtain  a partition $\{V^b_{i,1},V^b_{i,2} \}$ of $V^b _i$ such that 
$(V^b_{i,1},V^b_{i,2})$ is an $\alpha_{b+1} $-sparse cut in $G[V^b _i]$, and so that, 
for each $ j \in [2]$:
\begin{itemize}
    \item $|V^b_{i,j}|> (\frac{1}{k+1}+\frac{\zeta}{2})n$;
\item $\delta(G[V^b_{i,j}])\geq (1+\frac{1}{k+1}+\zeta-10(k+1)\alpha_{b+1})|V^b_{i,j}|\geq (1+\frac{1}{k+1}+\zeta-\alpha_{b+2})|V^b_{i,j}|$;
\item $d_G(v,V^b_{i,j})\geq |V^b_{i,j}|+(\frac{1}{k+1}+\zeta-10(k+1)\alpha_{b+1})n \geq |V^b_{i,j}|+(\frac{1}{k+1}+\zeta-\alpha_{b+2})n$ for all but at most $\alpha _{b+1} n=\alpha ^2 _{b+2} n$ vertices $v\in V^b_{i,j}$.
\end{itemize}

Once we have performed this step for all classes $V^b _i$, by relabeling we obtain a new partition 
$\{V^{b+1}_1, \dots, V^{b+1}_{a'} \}$ of $V(G)$ where $a+1\leq a' \leq 2a$.
In particular, we order the  classes so that the first class(es) in this new partition are subsets of $V^b _1$, the next class(es) are subsets of $V^b _2$, and so on. Moreover, if a class $V^b _i$ was partitioned in the last step, then the class $V^b_{i,1}$ must appear immediately before the class $V^b_{i,2}$ in the new partition  $\{V^{b+1}_1, \dots, V^{b+1}_{a'} \}$. Note that $\{V^{b+1}_1, \dots, V^{b+1}_{a'} \}$ satisfies (C1)--(C3) (with $b+1$ and $a'$ playing the roles of $b$ and $a$, respectively). We then continue to the next step of the procedure.

Crucially, this procedure must terminate for some $b \leq k-1$. Indeed, at every step of the process we either terminate the procedure (if all classes induce robust $(\nu, \tau)$-outexpanders), or create a new partition of $V(G)$ with at least one  more partition class than before. By (C1), we can have at most $k$ classes in $\{V^b_1, \dots, V^b_a \}$; this therefore means the procedure does terminate after at most $ k-1$ steps.
Let $\{V_1, \dots, V_t \}$ denote the partition at the end of this procedure. Then (C1) implies that Proposition~\ref{prop:structure}(i) holds.

Since the procedure only terminates when each class $V_i$ in the partition induces a
 robust $(\nu, \tau)$-outexpander, together with (C2) this implies that
Proposition~\ref{prop:structure}(ii) holds.

If $t \geq 2$, consider any $V_i$ and $V_j$ where $1\leq i <j \leq t$. By the way in which we construct and order our partitions at every step, it must be the case that there is some $b \in [k-1]$ and $V^b _{i'},  V^b _{j'} \subseteq V(G)$ such that 
$V_i \subseteq V^b _{i'}$; $V_j \subseteq V^b _{j'}$; $(V^b _{i'},  V^b _{j'})$ is an $\alpha _b$-sparse cut in $G[V^b _{i'}\cup V^b _{j'}]$.
As $|V_i|, |V_j|\geq n/(k+1)$ this implies that
$$
e^+_G (V_i,V_j) \leq \alpha  _b \cdot |V^b _{i'}|  |V^b _{j'}| \leq 
\alpha  _{k-1} (k+1)^2|V_i| |V_j| \leq \alpha  |V_i| |V_j|,
$$
which implies that Proposition~\ref{prop:structure}(iv) holds.

Finally, if $t \geq 2$, suppose for a contradiction that $e^-_G(V_i, V_j)\leq \frac{n^2}{(k+1)^2}$ for some $1\leq i<j\leq t$.    If $|V_i|\leq |V_j|$, using Proposition~\ref{prop:structure}(iv) we obtain that
$$
|V_i|\left (1+\frac{1}{k+1}+\zeta \right )n-2|V_i|^2\leq e_G(V_i, V(G)\setminus V_i)\leq |V_i|(n-|V_i|-|V_j|)+\frac{n^2}{(k+1)^2}+\alpha  n^2,
$$ 
which implies that
$$\left (1+\frac{1}{k+1}+\zeta \right )n\leq n-|V_j|+|V_i|+\frac{n^2}{(k+1)^2|V_i|}+\frac{\alpha n^2}{|V_i|}<\left (1+\frac{1}{k+1}+\zeta \right )n,$$ a contradiction.  Similarly, if $|V_j|\leq |V_i|$, we analogously obtain a contradiction by considering $e_G(V_j, V(G)\setminus V_j)$. Thus, Proposition~\ref{prop:structure}(iii) holds.
\end{proof}

\section{Embedding the Hamilton cycle}\label{sec:embed}

Our main result in this section is the following.  

\begin{proposition}\label{prop:main}
Let $0<1/n_0 \ll \rho\ll\nu\ll\tau\ll\eta \ll 1$ and let $n\geq n_0$.  If $G$ is an $n$-vertex digraph and $\{V_1, \dots, V_t\}$ is a partition of $V(G)$ such that:
\begin{enumerate}
\item for all $i\in [t]$, $|V_i|\geq \eta n$ and consequently $t\leq \frac{1}{\eta}$;
\item for all $i\in [t]$, $G[V_i]$ is a robust $(\nu,\tau)$-outexpander with $\delta^0(G[V_i])\geq \eta|V_i|$;
\item if $t \geq 2$, then for all $1\leq i<j\leq t$, $e^+_G(V_i, V_j)\geq \frac{n}{\rho^2}$, 
\end{enumerate}
then $G$ contains every orientation of a Hamilton cycle, except possibly the directed Hamilton cycle. 
\end{proposition}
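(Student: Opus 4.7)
The case $t = 1$ is immediate from Taylor's theorem (Theorem \ref{thm:ham}) applied to $G$, which is itself a robust $(\nu,\tau)$-outexpander with $\delta^0(G) \ge \eta n$, so I focus on the case $t \ge 2$. Let $C$ be any orientation of a Hamilton cycle other than the directed one. Fix a cyclic traversal of $C$ and call each edge \emph{CW} or \emph{CCW} according to whether it agrees with the traversal; since $C$ is not the directed Hamilton cycle, both types are present, and by reversing the traversal if necessary I may assume the CW edges are in the majority (so in particular there are at least $t-1$ of them).

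The overall plan is to cut $C$ into $t$ cyclically consecutive arcs $A_1,\ldots,A_t$ of sizes $|V_1|,\ldots,|V_t|$ (in this cyclic order), to embed each $A_i$ as an oriented Hamilton path of $G[V_i]$ with prescribed endpoints, and to realize the $t$ cut edges of $C$ as actual edges of $G$ between consecutive classes $V_i$ and $V_{i+1}$ (indices taken mod $t$). The crucial observation is that the cyclic arrangement $V_1 \to V_2 \to \cdots \to V_t \to V_1$ has exactly one \emph{backward} transition in the tournament order, namely the wrap-around step $V_t \to V_1$, and the backward edges required for it are \emph{not} supplied by condition (iii). To circumvent this, I place the wrap-around cut at a CCW edge of $C$: the direction of the required $G$-edge then flips and it becomes a $V_1 \to V_t$ edge, which does exist by (iii). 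The remaining $t-1$ cuts are placed at CW edges of $C$, so the corresponding connection edges $V_i \to V_{i+1}$ ($i<t$) are forward and again supplied by (iii).

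Executing this plan amounts to three tasks: (a) choose cut positions in $C$ that simultaneously match the arc sizes $|V_i|$ and land on edges of the correct (CW/CCW) type; (b) select pairwise vertex-disjoint connection edges with prescribed class membership from the $\ge n/\rho^2$ edges supplied by (iii); (c) embed each arc $A_i$ as an oriented Hamilton path of $G[V_i]$ with the chosen endpoints. Task (a) is handled by reserving small ``flexibility'' subsets inside each $V_i$ so that arc sizes can be adjusted by absorption, giving enough slack to shift any cut position to a nearby edge of the correct type. Task (b) is a straightforward greedy selection since $n/\rho^2$ dominates the $O(t)$ endpoints that must be avoided. The main obstacle is task (c): one needs a ``path version'' of Taylor's theorem that produces an oriented Hamilton path with specified endpoints inside each robust outexpander $G[V_i]$. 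I expect this to follow either from the auxiliary linking result (Theorem \ref{thm:link}), or by adjoining a dummy vertex to $G[V_i]$ whose in- and out-neighbors are chosen so that Taylor's theorem applied to the resulting robust outexpander yields the desired oriented Hamilton path after removing the dummy vertex.
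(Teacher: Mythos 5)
Your skeleton matches half of the paper's argument: cutting $C$ into $t$ consecutive arcs, realizing the $t-1$ internal cuts by forward edges $V_i\to V_{i+1}$, and placing the wrap-around cut at a backward edge of $C$ so that the required connection runs from $V_1$ to $V_t$ is exactly what the paper does in its Case~2 (there the traversal is started at a source $x_1$, so the edge of $C$ between $x_n$ and $x_1$ points from $x_1$ to $x_n$ and is realized by an edge from $V_1$ to $V_t$), and task (c) is indeed supplied by Theorem~\ref{thm:link}. But there are two genuine gaps. First, task (a) as you describe it can fail outright: the internal cut positions are forced (up to whatever absorption you have) by the sizes $|V_i|$, and such a position may sit in the middle of a directed segment of $C$ of \emph{linear} length oriented the wrong way, so the nearest edge of the correct type is linearly far away and no ``small flexibility subset'' can absorb the resulting discrepancy. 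This is precisely why the paper splits into two cases via Observation~\ref{obs:cases}: when $C$ contains a directed segment on $\floor{\beta n}$ vertices the arc-per-class strategy is abandoned --- the long directed segment is threaded through all of $V_1,\dots,V_t$ in order, and the non-directed remainder of $C$ is chopped into $O(1/\rho)$ short pieces distributed among the classes via an auxiliary embedding into a transitive tournament (Observation~\ref{obs:ros}). Only when every segment on $\floor{\beta n}$ vertices contains a switch is your decomposition used, and even then each cut may need to be shifted by up to $\beta n$, so $|A_s|$ can exceed $|V_s|$ by up to $\beta n$.

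Second, the ``absorption'' is the technical crux and you have not supplied it. If $|A_s|=|V_s|+d_s$ with $d_s>0$, then $d_s$ vertices of $A_s$ must be embedded into $V_{s+1}$, and every edge of $C$ joining such a vertex to a vertex embedded in $V_s$ must be realizable in $G$; since condition (iii) only guarantees edges from $V_s$ to $V_{s+1}$, an isolated displaced vertex must be a \emph{sink} of $C$, so that both of its cycle-edges point into it and can be realized as two $V_s\to V_{s+1}$ edges. This is the paper's ``cherry'' construction $\Lambda^s_i$, and when $d_s$ exceeds a constant one must additionally displace a whole directed sub-segment of $C$ ending at a sink (the segment $P^*_s$), whose existence has to be extracted from the minimality of the cut position. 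Note that a nonzero imbalance genuinely cannot always be avoided (for the anti-directed Hamilton cycle with $n\equiv 0\pmod 4$ and two classes of size $n/2$, a parity count forces $|A_1|\neq |V_1|$ for every admissible choice of cuts), so some such mechanism is unavoidable, and it requires the displaced vertices to be switches of $C$ located in well-separated positions away from the already-embedded connection edges --- none of which is addressed by ``reserving small flexibility subsets.''
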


Note that Theorem~\ref{thm:main} follows easily from  
Propositions~\ref{prop:structure} and~\ref{prop:main}.
\begin{proof}[Proof of Theorem~\ref{thm:main}]
Let $\eta >0$ and define $k \in \mathbb N$  such that
$1/(k+1) \ll \eta $. Set $\eta ':=1/(k+1)$ and define additional constants so that
$$
0<1/n_0 \ll \rho \ll \nu \ll \tau \ll \alpha \ll \zeta \ll \eta'.
$$
Let $G$ be a digraph on $n\geq n_0$ vertices with $\delta (G) \geq (1+\eta )n\geq (1+\eta '+\zeta )n$.
By applying Proposition~\ref{prop:structure}, and reversing the ordering of the partition of $V(G)$ outputted, we obtain a partition $\{V_1, \dots, V_t\}$ of $V(G)$ so that:
\begin{itemize}
    \item  for all $i\in [t]$, $|V_i|\geq (\eta' +\frac{\zeta}{2})n$;
\item for all $i\in [t]$, $G[V_i]$ is a robust $(\nu,\tau)$-outexpander with $\delta(G[V_i])\geq (1+\eta '+\frac{\zeta}{2})|V_i|$, and thus $\delta ^0(G[V_i])\geq \eta '|V_i| $;
\item if $t\geq 2$, then for all $1\leq i<j\leq t$, $e^+_G(V_i, V_j)> \frac{n^2}{(k+1)^2}$.
\end{itemize}
Applying Proposition~\ref{prop:main} (with $\eta '$ playing the role of $\eta $) implies that  $G$ contains every orientation of a Hamilton cycle,  except for perhaps the directed Hamilton cycle (in the case when $G$ is not strongly connected).
\end{proof}

Before proceeding with the proof of Proposition~\ref{prop:main} we need some further notation. 
A \emph{segment} of an oriented cycle $C$ (resp.~oriented path $P$) is an oriented path that forms an induced subgraph of $C$ (resp.~$P$). A \emph{directed segment} of an oriented cycle or path is a segment that induces a directed path.
Note that we will often write a segment in the form $x_1\dots x_{\ell}$ to indicate the ordered sequence of vertices this segment contains; this notation does not tell us anything about how the edges in the segment are oriented.
Given a segment $P$ of an oriented cycle or path $C$,
a non-endpoint vertex $v$ of $P$ is a \emph{switch} if it is either a source or a sink in $P$.

The proof of Proposition~\ref{prop:main}  splits into two cases depending on whether the orientation of a Hamilton cycle we wish to embed contains a directed segment of order at least $\beta n$ or not.  The following observation explicitly describes the two cases.

\begin{observation}\label{obs:cases}
Let $0<\beta\leq 1$ and let $C$ be an oriented cycle on $n\geq 3$ vertices.  Either
\begin{enumerate}
\item $C$ contains a directed segment on $\floor{\beta n}$ vertices, or
\item every segment of $C$ on  $\floor{\beta n}$ vertices contains a switch. \qed
\end{enumerate}
\end{observation}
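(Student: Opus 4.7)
The statement is essentially a complementary dichotomy, so the plan is very short. The key point is that the two conditions on a segment — being \emph{directed} and \emph{containing a switch} — are complementary for internal vertices. I would first unpack the definitions to make this explicit, and then the observation falls out immediately.

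The plan is to verify that a segment $P$ of $C$ is directed if and only if no non-endpoint vertex of $P$ is a switch. One direction is trivial: in a directed path, every internal vertex has exactly one in-edge and one out-edge within the path, so it is neither a source nor a sink. For the other direction, traverse $P$ from one endpoint to the other; at each internal vertex $v$, the incident pair of edges must be consistently oriented (both ``through'' $v$ in the same direction) since $v$ is neither a source nor a sink, so orientations propagate consistently along the whole path, yielding a directed segment.

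Given this equivalence, the dichotomy is immediate: suppose conclusion (1) fails, i.e. $C$ has no directed segment on $\lfloor \beta n \rfloor$ vertices. Pick any segment $P$ of $C$ on $\lfloor \beta n \rfloor$ vertices; by assumption $P$ is not directed, so by the equivalence above it must contain a non-endpoint switch. This establishes (2), completing the observation.

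The only possible subtlety — hardly an obstacle — is the case when $\lfloor \beta n \rfloor \leq 2$, in which case a segment has no internal vertex and is trivially directed; then conclusion (1) holds automatically (any edge or vertex of $C$ furnishes a directed segment), so there is nothing further to check. No main obstacle is anticipated.
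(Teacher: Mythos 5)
Your proposal is correct and matches the paper's (implicit) reasoning: the paper states this observation with an inline \qed precisely because, with switches defined as non-endpoint sources/sinks of a segment, conclusion (2) is the literal negation of conclusion (1), which is the equivalence you verify. Nothing further is needed.
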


The next observation will allow us to connect up oriented paths that lie in different vertex classes in the partition of $V(G)$ in Proposition~\ref{prop:main}.

\begin{observation}\label{obs:bip}
Let $\rho >0$, let $G$ be an $n$-vertex digraph and let $X, Y\subseteq V(G)$ be disjoint with  $e^+_G(X,Y)\geq \frac{n}{\rho}$.  If $X':=\{x\in X: d^+_G(x, Y)\geq \frac{1}{\rho}\}$ and  $Y':=\{y\in Y: d^-_G(y, X)\geq \frac{1}{\rho}\}$, then $|X'|, |Y'|\geq \frac{1}{\rho}$.
\end{observation}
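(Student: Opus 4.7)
The plan is a short double-counting argument. I will prove the lower bound on $|X'|$; the bound on $|Y'|$ follows by the entirely symmetric argument applied to the reverse digraph (which swaps the roles of in- and out-neighborhoods and of $X$ and $Y$).

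The approach is to assume for contradiction that $|X'| < 1/\rho$ and upper-bound $e^+_G(X,Y) = \sum_{x \in X} d^+_G(x,Y)$ by splitting the sum according to whether $x$ lies in $X'$ or in $X \setminus X'$. On $X'$, only the trivial bound $d^+_G(x,Y) \leq |Y|$ is available, but multiplying by $|X'| < 1/\rho$ gives a contribution strictly less than $|Y|/\rho$. On $X \setminus X'$ the definition of $X'$ yields $d^+_G(x,Y) < 1/\rho$ pointwise, summing to strictly less than $|X \setminus X'|/\rho$. Adding the two contributions and using that $X$ and $Y$ are disjoint subsets of $V(G)$, so $|X|+|Y| \leq n$, gives $e^+_G(X,Y) < (|X|+|Y|)/\rho \leq n/\rho$, contradicting the hypothesis.

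I do not anticipate any genuine obstacle: this is the standard averaging step that extracts a (constant-sized) set of high-degree vertices from a lower bound on the total edge count, and it fits cleanly into the paper's broader strategy of using Observation~\ref{obs:bip} to find many candidate ``linking'' vertices between the parts $V_i$ supplied by Proposition~\ref{prop:structure}. The only minor care needed is verifying that the chain of inequalities is in fact strict, which reduces to checking the trivial degenerate cases ($|Y|=0$, $X=X'$, $X=\emptyset$) separately and observing that in each the hypothesis $e^+_G(X,Y) \geq n/\rho$ either fails outright or collapses to $e^+_G(X,Y) \leq |X||Y| < n/\rho$.
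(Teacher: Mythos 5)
Your argument is correct and is essentially identical to the paper's proof: the same contradiction assumption $|X'|<1/\rho$, the same split of $e^+_G(X,Y)$ over $X'$ and $X\setminus X'$ yielding $e^+_G(X,Y)<|X'||Y|+(|X|-|X'|)\frac{1}{\rho}<\frac{1}{\rho}(|X|+|Y|)\leq \frac{n}{\rho}$. Your extra care about strictness in degenerate cases is fine but not needed beyond what the paper already implicitly handles.
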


\begin{proof}
If say $|X'|<\frac{1}{\rho}$, then we have $e^+_G(X,Y)< |X'||Y| +(|X|-|X'|)\frac{1}{\rho}<\frac{1}{\rho}|Y| +\frac{1}{\rho}|X| \leq \frac{n}{\rho}$, a contradiction. 
\end{proof}

We also need a result that allows us to span each robust outexpander $G[V_i]$ with a collection of oriented paths (which will then be connected together to form our desired orientation of a Hamilton cycle in $G$).  This will be derived from a strengthening of the following theorem of Taylor \cite[Theorem~49]{Tay}.  

\begin{theorem}[Taylor~\cite{Tay}]\label{thm:ham}
Let $0<1/n_0\ll \nu \ll \tau\ll \eta<1$ and let $G$ be a digraph on $n\geq n_0$ vertices.  If $G$ is a robust $(\nu, \tau)$-outexpander with $\delta^0(G)\geq \eta n$, then $G$ contains every orientation of a Hamilton cycle.
\end{theorem}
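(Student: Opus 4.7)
The plan is to prove Theorem~\ref{thm:ham} via the absorbing method, which is the natural framework for Hamiltonicity results in robust outexpanders; the key difficulty compared with the directed-Hamilton-cycle case in~\cite{robust} is that the local orientation pattern of the target cycle $\vec{C}$ varies along $\vec{C}$, so we need absorbers of several different ``local types.'' Fix an arbitrary orientation $\vec{C}$ of an $n$-cycle. Throughout, we rely on two consequences of the hypothesis: first, $\delta^0(G)\geq \eta n$ gives us plenty of edges in each direction at every vertex, and second, robust $(\nu,\tau)$-outexpansion plus linear minimum semi-degree implies an analogous robust in-expansion (by reversing all edges and checking the definition), so we can build short oriented paths of \emph{any} prescribed orientation between almost any prescribed pair of endpoints.

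First, I would construct a family of \emph{local absorbers}. Every vertex $v \in V(G)$ sits at a position in $\vec{C}$ whose local pattern (say the orientation of the four edges surrounding a putative copy of $v$) is one of a bounded number of types. For each type $\sigma$ and each vertex $v$, I would show that $G$ contains many short oriented paths $Q_v^\sigma$ such that $Q_v^\sigma$ realizes $\sigma$, and such that deleting $v$ from $Q_v^\sigma$ still leaves an oriented path of the same endpoint orientations on the same vertex set (minus $v$). The existence of many such $Q_v^\sigma$ follows from $\delta^0(G)\geq \eta n$ and the fact that robust expansion gives short oriented paths of any orientation between two large vertex sets. A standard probabilistic argument (take a random subset and clean up) then produces a single oriented path $P_{\mathrm{abs}}$ of length $o(n)$ that contains, for every $v$ and every type $\sigma$, many copies of $Q_v^\sigma$; hence $P_{\mathrm{abs}}$ can absorb any sufficiently small leftover set $L$ while still realizing the corresponding segment of $\vec{C}$.

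Second, after reserving $P_{\mathrm{abs}}$, I would cover almost all remaining vertices by an oriented path $P^*$ realizing the complementary segment of $\vec{C}$. Apply Observation~\ref{obs:cases}-style reasoning: split $\vec{C}\setminus P_{\mathrm{abs}}$ into a bounded number of maximal directed segments separated by switches. Each directed segment of length $\Omega(n)$ is embedded using the (directed) Hamilton-connectedness that robust outexpansion provides, and switches are inserted by using $\delta^0(G)\geq \eta n$ to flip direction locally. Robust expansion of $G$ minus the small reserved set ensures that we can always extend the partial embedding by the next required edge (in or out) while avoiding the previously used vertices, and that we can keep the number of uncovered vertices below the absorption threshold.

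Finally, use robust expansion once more to connect the two ends of $P^*$ to the two ends of $P_{\mathrm{abs}}$ by the orientation prescribed by $\vec{C}$, and then apply the absorbing property to swallow the leftover vertices, closing up to a copy of $\vec{C}$. The main obstacle is Step~1: designing absorbers that are simultaneously flexible enough to handle any local pattern of $\vec{C}$ and uniform enough that a random reservoir contains enough of every type. The case where $\vec{C}$ has many switches (e.g.\ the anti-directed cycle) is the most delicate, since the absorber must locally support source/sink vertices; here one exploits that both $d^+_G(v)$ and $d^-_G(v)$ are linear, together with robust expansion in both directions, to guarantee many absorbers of every switch type at every vertex.
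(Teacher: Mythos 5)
First, note that the paper does not prove Theorem~\ref{thm:ham} at all: it is quoted verbatim from Taylor~\cite[Theorem~49]{Tay}, and the glimpses of Taylor's argument visible in the appendix (Lemma~\ref{prop:shortConnect}, Lemma~\ref{lem:splitRobust}, the remark that ``segments $Q_i$ of the cycle are embedded greedily'') show that the actual proof runs through the Diregularity lemma, reduced digraphs, splitting the host into smaller robust outexpanders, and connecting greedily embedded segments by short oriented paths of prescribed orientation. Your absorbing-method plan is therefore a genuinely different route, and it is precisely the route the literature on arbitrary orientations (H\"aggkvist--Thomason, \cite{DKMOT}, \cite{Tay}) avoids --- for a reason that your sketch does not confront.

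The gap is in Step~1. For a \emph{fixed, arbitrary} orientation $\vec{C}$, the absorbing path must in the end realize a prescribed segment of $\vec{C}$, and absorbing $\ell$ leftover vertices lengthens that segment by $\ell$, shifting the required orientation of \emph{every} edge downstream of each insertion point. Your absorber condition --- that deleting $v$ from $Q_v^{\sigma}$ leaves an oriented path ``of the same endpoint orientations'' --- is purely local and does not address this global re-indexing: after absorption the path must realize positions $1,\dots,m+\ell$ of $\vec{C}$ rather than $1,\dots,m$, where $\ell$ is not known when $P_{\mathrm{abs}}$ is built, and for a non-periodic orientation these are genuinely different patterns that a single pre-built structure cannot simultaneously realize. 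Absorption works for the directed cycle (constant pattern) and the anti-directed cycle (period two, as exploited in~\cite{DM}), but for a general $\vec{C}$ with switches placed arbitrarily there is no periodicity to exploit, and this is exactly the obstruction that forces the segment-embedding/connecting approach. Two smaller points: your claim that robust out-expansion yields robust in-expansion ``by reversing all edges and checking the definition'' is false as stated --- reversing edges converts out-expansion of $G$ into out-expansion of the reversal, which is a different property; the implication is true with degraded parameters but requires a counting argument. And embedding a directed segment of prescribed length $\ell<n$ between prescribed endpoints is not given by Hamilton-connectedness; it needs the splitting machinery of Lemma~\ref{lem:splitRobust} or Lemma~\ref{prop:shortConnectgen}. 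As written, the proposal does not constitute a proof.
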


One can modify the proof of Theorem \ref{thm:ham} to obtain the following. 

\begin{theorem}[Universally $k$-linked]\label{thm:link}
Let $0<1/n_0\ll \beta \ll \nu \ll \tau\ll \eta$, let $k\in \mathbb N$  with $k\leq 1/\beta^2$,
 let $G$ be a digraph on $n\geq n_0$ vertices, and let $Q_1, \dots, Q_k$ be a collection of oriented paths with $|Q_i|=:\ell_i\geq 1/\beta$ for all $i\in [k]$ such that $\ell_1+\dots+\ell_k=n$.  If $G$ is a robust $(\nu, \tau)$-outexpander with $\delta^0(G)\geq \eta n$, then for all distinct vertices $u_1, \dots, u_{k}, v_1, \dots, v_k\in V(G)$, there exists a collection of disjoint oriented paths $P_1, \dots, P_k$ in $G$ such that for all $i\in [k]$, $P_i$ starts with $u_i$ and ends with $v_i$, and $P_i$ is a copy of $Q_i$. 
\end{theorem}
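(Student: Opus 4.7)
The plan is to adapt Taylor's proof of Theorem~\ref{thm:ham} in a modular way so as to produce $k$ disjoint oriented paths of prescribed shape and endpoints rather than a single Hamilton cycle. Taylor's argument proceeds via the standard absorbing method for robust outexpanders: reserve a small random connecting set $R\subseteq V(G)$; build an absorbing path $A$; find an almost-spanning collection of path fragments covering $V(G)\setminus R$ up to a tiny leftover $L$; splice everything together through $R$; and absorb $L$ into $A$. Because $k\le 1/\beta^2$ is a bounded constant and $1/\beta \ll \nu$, splitting each of these ingredients into $k$ parallel copies costs only $o(n)$ additional vertices, and the flexibility afforded by robust outexpansion lets us carry the index $i\in[k]$ and the prescribed shape $Q_i$ throughout.

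Concretely, I would first greedily extend from each $u_i$ (respectively $v_i$) by $1/\beta^{1/2}$ steps following the first (resp.\ last) few edges of $Q_i$, obtaining short oriented paths $S_i^+$ and $S_i^-$; the minimum semi-degree condition $\delta^0(G)\ge\eta n$ makes these extensions trivial, and in total only $O(k/\beta^{1/2})=o(n)$ vertices are used. Next, I would reserve a random set $R\subseteq V(G)\setminus\bigcup_i(S_i^+\cup S_i^-)$ with the property that any constant number of pairs of vertices can be joined inside $R$ by internally disjoint oriented paths of any prescribed bounded shape — this is the standard reservoir lemma for robust outexpanders, and $G\setminus R$ remains a robust $(\nu/2,2\tau)$-outexpander with semi-degree $\ge (\eta/2)n$. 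Then, for each $i\in[k]$, I would build a disjoint absorbing oriented path $A_i$ whose shape matches an interior segment of $Q_i$ and which can absorb any small prescribed set of leftover vertices while preserving its shape; these absorbers live in $V(G)\setminus R$ and collectively use $o(n)$ vertices, and their construction follows by a standard switching argument inside the robust outexpander. Now I would apply the almost-spanning embedding theorem, which is the technical engine underlying Theorem~\ref{thm:ham}, to the remaining vertices to produce oriented paths that tile the remaining interior portions of $Q_1,\dots,Q_k$ up to a small leftover $L$ of size $o(|R|)$. Finally, splice these fragments together in the order dictated by each $Q_i$, using $R$ as a connecting reservoir to realize the correct local orientations at each junction, and then distribute $L$ together with the unused vertices of $R$ among the $A_i$ via the absorbing property.

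The main obstacle is verifying that each stage of Taylor's proof goes through while simultaneously respecting $k$ distinct endpoint pairs and $k$ distinct prescribed path shapes. Taylor already handles arbitrary cycle orientations, so the orientation-matching machinery — in particular the ability to extend an oriented segment through a linear-sized set and to connect arbitrary orientations through a reservoir — is in place. The new work is essentially bookkeeping: keeping the $k$ pre-embeddings $S_i^{\pm}$, the $k$ absorbers $A_i$, and the $k$ families of almost-spanning fragments pairwise disjoint, and then routing $k$ spliced paths through the shared reservoir $R$ without conflict. Because $k$ is a bounded constant while the robust expansion provides $\Omega(n)$ slack at every step, each of these bookkeeping tasks succeeds by a straightforward union bound and greedy allocation, and the modification of Taylor's proof is indeed mechanical once the five ingredients above are set up.
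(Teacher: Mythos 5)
Your proposal rests on the premise that the proof of Theorem~\ref{thm:ham} is a standard absorption argument, and that its ingredients (a reservoir lemma, shape-preserving absorbing paths, an almost-spanning embedding of arbitrary oriented path systems) can be lifted off the shelf and run $k$ times in parallel. That premise is where the gap lies. Taylor's proof (like the K\"uhn--Osthus--Treglown Hamiltonicity proof for robust outexpanders it builds on) is a regularity-method argument -- long directed segments are embedded via the reduced digraph and blow-up-type tools, and switch-heavy segments are embedded greedily -- not an absorbing-path argument. So the ``five ingredients'' you propose to reuse do not exist in the cited work; in particular, an absorbing oriented path $A_i$ ``whose shape matches an interior segment of $Q_i$ and which can absorb any small prescribed set of leftover vertices while preserving its shape'' would have to be constructed from scratch for an \emph{arbitrary} orientation, and this is genuinely delicate (absorption for anti-directed Hamilton cycles alone required substantial work in the DeBiasio--Molla paper). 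There is a further unaddressed constraint: since $\ell_1+\dots+\ell_k=n$, the paths must partition $V(G)$ exactly, so each $A_i$ must absorb a \emph{prescribed number} of leftover vertices so that $|P_i|=\ell_i$ exactly; standard absorbers give flexibility in which vertices are absorbed but controlling the exact count per path, per shape, is an additional requirement you do not set up. Calling all of this ``mechanical bookkeeping'' undersells where the real difficulty sits.

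For contrast, the paper avoids touching the internals of Taylor's proof almost entirely. Short paths ($|Q_i|\leq \eps^{1/4}n$) are embedded one at a time by Taylor's connection lemma (Lemma~\ref{prop:shortConnect}). For the long paths, the key tool is Lemma~\ref{lem:splitRobust}: a robust outexpander can be randomly partitioned into parts of prescribed linear sizes, each of which is again a robust outexpander with large semi-degree into every part (this works because the random partition preserves the reduced digraph of a regularity partition). One then puts $u_i,v_i$ into the part of size $\ell_i$ and applies a ``Hamilton-connected'' version of Taylor's theorem (Theorem~\ref{thm:hamCon}, itself obtained from Theorem~\ref{thm:ham} by a vertex-identification trick plus one genuinely minor modification, Theorem~\ref{thm:hamCon2}). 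If you want a correct write-up along your general philosophy of reduction to Taylor's work, the splitting route is the one to take: it localizes the only place where Taylor's proof must be reopened to the single, small modification needed for Theorem~\ref{thm:hamCon2}.
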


An explanation of how to deduce Theorem~\ref{thm:link} from Taylor's work~\cite{Tay} is given in the appendix.

Finally, we need the following simple observation which is just an easy special case of Havet and Thomass\'e's theorem~\cite{HavT}.

\begin{observation}\label{obs:ros}
Let $T$ be a transitive tournament.  For all oriented paths $Q$ with $|Q|\leq |T|$, there is a copy of $Q$ in $T$. \qed
\end{observation}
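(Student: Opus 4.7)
The plan is to prove the observation by a straightforward induction on $\ell := |Q|$. First I would reduce to the case $|T| = \ell$: any induced subtournament of a transitive tournament is again transitive, so it suffices to find a copy of $Q$ inside a transitive sub-tournament on exactly $\ell$ vertices. Label $V(T) = \{v_1, \dots, v_\ell\}$ so that $v_iv_j \in E(T)$ iff $i < j$, and list the vertices of $Q$ in path order as $q_1, q_2, \dots, q_\ell$.

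The base case $\ell = 1$ is trivial. For the inductive step, I would peel off the first vertex $q_1$ and branch on the orientation of the edge between $q_1$ and $q_2$. If this edge is $q_1q_2$, map $q_1$ to $v_1$; if it is $q_2q_1$, map $q_1$ to $v_\ell$. In either case, removing $q_1$ from $Q$ leaves an oriented path on $\ell-1$ vertices, and removing the corresponding extreme vertex from $T$ leaves a transitive tournament on $\ell-1$ vertices, so the inductive hypothesis yields an embedding of $Q-q_1$ into this smaller tournament. The extremal choice of $q_1$'s image automatically realizes the first edge of $Q$ regardless of where $q_2$ is placed: if $q_1\mapsto v_1$, then the image of $q_2$ is some $v_i$ with $i\geq 2$ and $v_1v_i\in E(T)$; the case $q_1\mapsto v_\ell$ is symmetric.

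Since this observation is genuinely elementary (which is presumably why the authors label it as such and refer to it as a special case of Havet--Thomass\'e), I do not anticipate any real obstacle. The only insight needed is that sending the current endpoint of the remaining oriented path to the extreme vertex of the remaining transitive tournament trivializes the edge constraint at that endpoint, so the induction goes through with no case analysis beyond the two possible orientations of the first edge.
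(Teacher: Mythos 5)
Your induction is correct and complete: sending $q_1$ to the global source $v_1$ of $T$ when the first edge is $q_1q_2$, and to the global sink $v_\ell$ when it is $q_2q_1$, makes the first edge constraint automatic regardless of where $q_2$ is embedded, and the remaining tournament is again transitive, so the induction closes. The paper offers no proof at all (it states the observation with a \qed, citing it as a special case of Havet--Thomass\'e), and your argument is exactly the standard elementary one that justifies it.
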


We now proceed to the  proof of Proposition~\ref{prop:main}.   

\begin{proof}[Proof of Proposition~\ref{prop:main}]
We first choose a constant $\beta$ so that $\rho\ll \beta\ll \nu$. Let $G$ be an $n$-vertex digraph as in the proposition. 
Note we may assume that $t\geq 2$, otherwise the proposition follows immediately from Theorem~\ref{thm:ham}.

Let $C$ be an $n$-vertex oriented cycle that is  not a directed cycle. We will define an embedding $f:V(C) \rightarrow V(G)$ of $C$ into $G$. We will split into cases depending on how $C$ is oriented. In each case, 
we will first select the edges used in $f$ that go between different vertex classes $V_i, V_j$ for each $1\leq i<j\leq t$ (and possibly a small number of other edges); we then apply Theorem~\ref{thm:link} to find collections of oriented paths in each $G[V_i]$ that, together with the previously selected edges, complete the embedding $f$.

\noindent
\tbf{Case 1: There exists a directed segment of $C$ on $\floor{\beta n}$ vertices.}  Let $\ell\geq \floor{\beta n}$ denote the number of vertices in the longest directed segment of $C$.  For each $j \in [t]$, let $\ell_j:=\sum _{i=1} ^{j} |V_i|$ and set $\ell_0:=0$. Note that $\ell_t=n$.

{\it Subcase 1(a): $0\leq n-\ell \leq \eta n/2$.}
Write $C=x_1\dots x_nx_1$ where $x_1$ is a source in $C$, $x_{\ell}$ is a sink in $C$ and $P:=x_1\dots x_{\ell}$ is a longest directed segment of $C$.
In particular, $x_1x_2, x_1x_n \in E(C)$.
Let $P':=x_{\ell+1}\dots x_{n}$ be the segment of $C$ from $x_{\ell+1}$ to $x_n$; note that $P'$ is empty if $\ell=n$.

For each $j \in [t]$, define $P_j:=x_{\ell_{j-1}+1}\dots x_{\ell_{j}}$ to be the segment of $C$ that starts at $x_{\ell_{j-1}+1}$ and ends at $x_{\ell_{j}}$. 
Note that for $j \in [t-1]$, $P_j$ is contained in $P$ and so is a directed segment of $C$, whereas  $P_t$ contains $P'$ (so may not be a \emph{directed} segment).
Note that the edges in $E(C)\setminus E(P_1\cup \dots \cup P_t)$ are precisely $x_1x_n$ and $x_{\ell_j}x_{\ell_j+1}\in E(C)$ for each $j\in [t-1]$.

We will define $f$ so that $P_j$ is embedded into $G[V_j]$ for each $j \in [t]$.
First, we choose $f(x_1) \in V_1$; $f(x_n) \in V_t$; $f(x_{\ell_j} )\in V_j$ and $f(x_{\ell_j+1} )\in V_{j+1}$ 
for each $j \in [t-1]$ so that:
\begin{itemize}
\item all selected vertices in this step are distinct;
\item $f(x_1)f(x_n) \in E(G)$;
\item $f(x_{\ell_j} )f(x_{\ell_j+1} ) \in E(G)$ for each $j \in [t-1]$.
\end{itemize}
Note that we can select such vertices in $G$ by Observation~\ref{obs:bip}.

Next, for each $j \in [t]$, we find a copy $P'_j$ of $P_j$  in $G[V_j]$ 
that starts at $f(x_{\ell_{j-1}+1})$ and ends at $f(x_{\ell_{j}})$. Indeed, we obtain these oriented paths by applying Theorem~\ref{thm:link} to each of $G[V_1], \dots, G[V_{t}]$. 

By concatenating the  paths $P'_1,\dots, P'_t$ and the edges we selected between vertex classes, we obtain our desired embedding $f$ of $C$ in $G$.

\smallskip

{\it Subcase 1(b): $\eta n/2 < n-\ell \leq n-\lfloor \beta n \rfloor$.}
Write $C=x_1\dots x_nx_1$ where $x_1$ is a source in $C$, $x_{n-\ell+2}$ is a sink in $C$ and $P:=x_1 x_n\dots x_{n-\ell+2}$ is a longest directed segment of $C$.
Let $P':=x_2 \dots x_{n-\ell+1}$ be the segment of $C$ from $x_2$ to $x_{n-\ell+1}$.
Note that $|P'|=n-\ell>\eta n/2$.

Set $D:=\floor{\rho n}$ and $q:=\ceil{\frac{n-\ell}{D}}<1/\rho$ and partition $P'$ into segments $P_1':=x_2\dots x_{n_1}$; $P_2':=x_{n_1+1} \dots x_{n_2}$; $\dots$; $P'_{q}:=x_{n_{q-1}+1}\dots x_{n-\ell+1}$ that are chosen to be as equal sized as possible. In particular, 
 for all $i\in [q]$, certainly $D/2 \leq |P_i'| \leq D$.  

Each segment $P'_i$ will be embedded fully into one of our vertex classes $V_j$. On the other hand, $P$ will be embedded across all of the vertex classes $V_1,\dots, V_t$ (see Figure~\ref{fig:case1b}). Before we can embed these segments, we need to construct the edges of $C$ that will go between the vertex classes.

To help with this, we define the  auxiliary oriented path $Q:=w_1w_2\dots w_q$ such that for all $i\in [q-1]$, $w_i w_{i+1}\in E(Q)$ if $x_{n_i} x_{n_i+1}\in E(P')$, and $w_{i+1} w_{i}\in E(Q)$ if $x_{n_i+1} x_{n_i}\in E(P')$.

For all $j\in [t]$, let $U_j:=\left \{u^j_1, \dots, u^j_{\floor{\frac{|V_j|-\rho n}{D}}}\right\}$.  Consider an auxiliary transitive tournament $T$ on vertex set $U_1\cup\dots\cup U_t$ such that $u^j_{k} u^{j'}_{k'}\in E(T)$ if and only if $j<j'$ or $j=j'$ and $k<k'$.  Note that 
$$|T|=\sum_{j\in [t]} \bigfloor{\frac{|V_j|-\rho n}{D}}\geq \bigceil{\frac{(1-\beta/2)n}{D}}\geq \bigceil{\frac{n-\ell}{D}}=|Q|.$$  
By Observation \ref{obs:ros}, there is an  embedding $\phi$ of $Q$ into $T$.  

We now use this auxiliary embedding $\phi$ as a `blueprint' to determine in which of the classes $V_1, \dots, V_t$ each segment $P'_i$ will be embedded into:
If $\phi(w_i)\in U_j$, then we will eventually embed $P'_i$ into $V_j$.  Furthermore if $\phi(w_i)\in U_j$ and $\phi(w_{i+1})\in U_{j'}$ and $w_i w_{i+1}\in E(Q)$, then by the way we defined $T$ it must be the case that $j\leq j'$.  If $j<j'$, this ensures that we can use Observation~\ref{obs:bip} to find an edge in $G$ from $V_j$ to $V_{j'}$ that will connect $f(x_{n_i})$ (the last vertex of $P_i'$) to $f(x_{n_i+1})$ (the first vertex of $P_{i+1}'$); if $j=j'$ we  use that $\delta^0(G[V_j])\geq \eta |V_j|$  to find an edge in  $G[V_j]$ that will connect $f(x_{n_i})$ to $f(x_{n_i+1})$.
One can  argue similarly 
if $w_{i+1}w_{i}\in E(Q)$ (with the roles of $j$ and $j'$ switched).

\begin{figure}[ht]
\centering
 \includegraphics[scale=1.3]{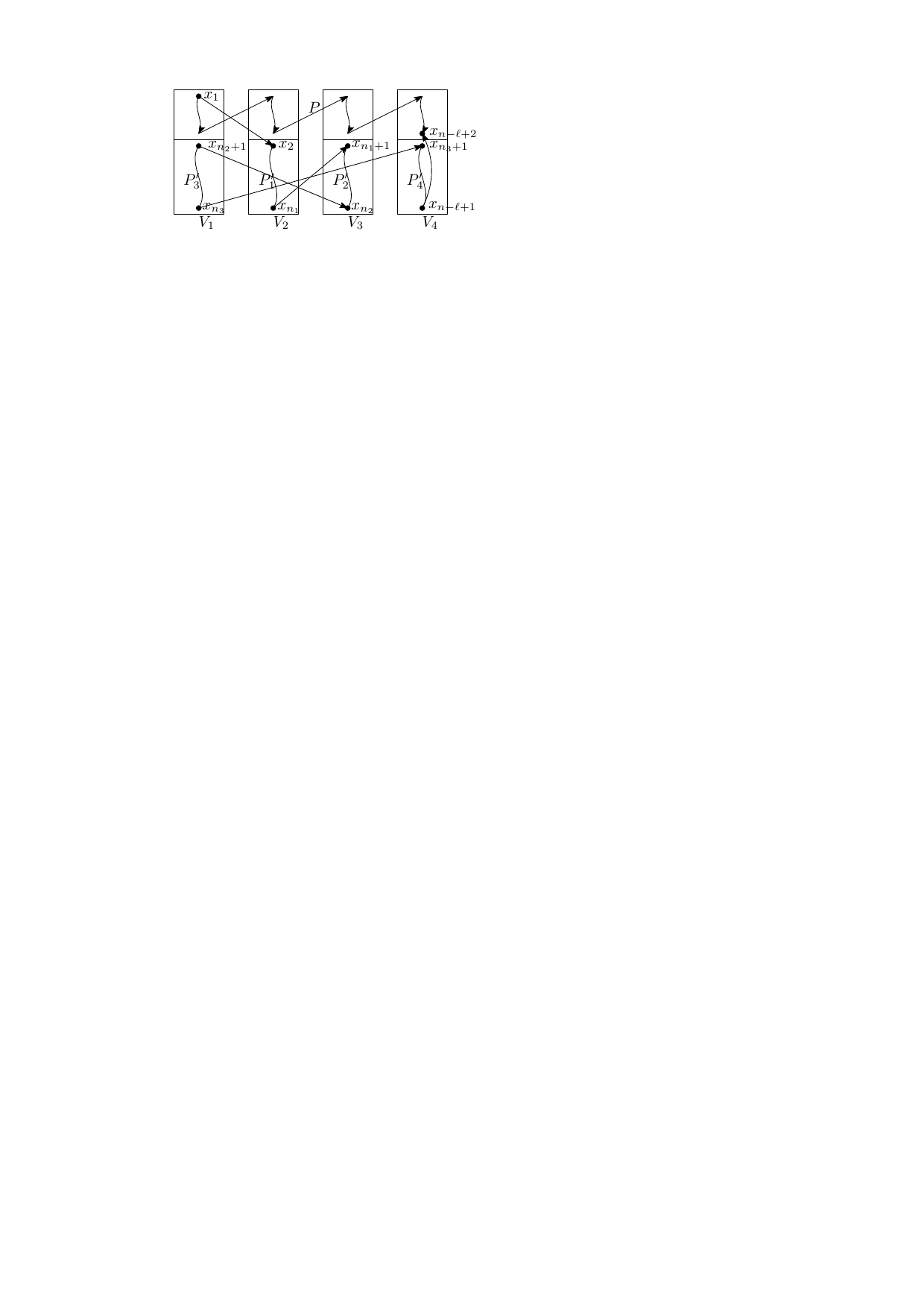} 
\caption{Case 1(b): An example with $q=4$}\label{fig:case1b}
\end{figure}

We now know in which classes we wish to embed each segment $P'_i$ of $P'$. This tells us how many vertices 
are `left' in each class $V_j$ that we need to cover using $P$. 
In particular, note that by the way we defined $T$, there are at least $\rho n$ vertices in each class $V_j$ that will not be used for embedding $P'$;\footnote{Specifically, at most $\floor{\frac{|V_j|-\rho n}{D}}$ of the segments $P'_i$ (each of which contain at most $D$ vertices) will be embedded into a class $V_j$.} so these vertices will have to be covered by $P$.
We will embed $P$ so that it goes through each class $V_1,\dots, V_t$ sequentially. That is, $x_1$ will be embedded in $V_1$ and we continue to use vertices in $V_1$ until the correct number of vertices are covered, then we jump over to $V_2$ (using a single edge between $V_1$ and $V_2$) and repeat this process;  finally $x _{n-\ell+2}$ will be embedded into $V_t$.
Before we can embed the whole of $P$, for each $j \in [t-1]$, we need to choose the single edge in $G$ that goes between $V_j$ and $V_{j+1}$ that will be used in the embedding of $P$. We can use Observation~\ref{obs:bip} to select such  edges so that they are all disjoint, and disjoint from all the edges selected previously that will be used to connect up the segments $P'_i$.

Also note that if $\phi(w_1)\in U_{j_1}$, then regardless of the value of $j_1$, we  can use Observation~\ref{obs:bip} (or the fact that $\delta^0(G[V_1])\geq \eta |V_1|$ if $j_1=1$) to find an edge from $V_{1}$ to $V_{j_1}$ that will connect $f(x_{1})$ to $f(x_{2})$. Likewise, if $\phi(w_q)\in U_{j_q}$, then regardless of the value of $j_q$ we can  find an edge from $V_{j_q}$ to $V_t$ which will connect $f(x_{n-\ell+1})$ to $f(x_{n-\ell+2})$. 

In summary, we now have selected all the edges in $G$ required to connect between the segments $P'_1,\dots, P'_q, P$, and all edges that will go between different classes in the embedding of $P$ into $G$.  
One can now apply Theorem~\ref{thm:link} to each induced subgraph $G[V_i]$ to obtain the embeddings of the segments $P'_1,\dots, P'_q$ as well as the remaining edges needed for $P$. This therefore completes the embedding $f$ of $C$ into $G$.

\smallskip

\noindent
\tbf{Case 2: Every segment on $\floor{\beta n}$ vertices contains a switch.}  Write $C=x_1x_2\dots x_nx_1$ where $x_1$ is a source in $C$.  In this case we have the following crucial property:
\begin{equation}\label{eq:switchy}
\text{Every segment of $C$ on at least $2\beta n$ vertices contains both a source and a sink.} 
\end{equation}

Our plan is to partition $C$ into $t$  segments $P_1, \dots, P_t$ and construct an embedding $f:V(C)\to V(G)$ of $C$ in $G$ so that for all $i\in [t-1]$, $f(P_i)\subseteq V_i\cup V_{i+1}$ and $f(P_t)\subseteq V_t$, and such that for all $i\in [t]$, $f(P_i)$ starts and ends in $V_i$.  We now explain the steps we take to achieve this embedding.

\smallskip

\tbf{Step 1: Partitioning $C$ into $t$  segments.}  Set $n_0:=0$ and $n_t:=n$.  Let $P_1:=x_{n_0+1}\dots x_{n_1}$ be the minimal segment in $C$ that starts at $x_{n_0+1}=x_1$ and where $x_{n_1}x_{n_1+1}\in E(C)$ and $|P_1|\geq |V_1|$;
note that as we are in Case~2, $P_1$ exists  and 
$|P_1|\leq |V_1|+\beta n$. Since $|V_2|\geq \eta n \gg \beta n$, we have that $|P_1|$ is much smaller than $|V_1|+|V_2|$.

Let $ s\in [t-2]$ and suppose we have chosen segments $P_1=x_{n_0+1}\dots x_{n_1}, P_2=x_{n_1+1}\dots x_{n_2},$ $\dots, P_{s}=x_{n_{s-1}+1}\dots x_{n_s}$ such that for all $i\in [s]$, $P_i$ is the minimal segment of $C$ that starts at $x_{n_{i-1}+1}$
such  that
 $x_{n_i}x_{n_i+1}\in E(C)$ and $|V_1|+\dots+|V_i|\leq |P_1|+\dots+|P_i|\leq |V_1|+\dots+|V_i|+\beta n$.
We then choose $P_{s+1}:=x_{n_s+1}\dots x_{n_{s+1}}$ minimally such that $x_{n_{s+1}}x_{n_{s+1}+1}\in E(C)$ and  $|P_1|+\dots+|P_{s+1}|\geq |V_1|+\dots+|V_{s+1}|$; again this is possible as we are in Case~2, and in fact we have $|P_1|+\dots+|P_{s+1}|\leq |V_1|+\dots+|V_{s+1}|+\beta n$.  Finally, we let $P_t:=x_{n_{t-1}+1}\dots x_{n_t}$ and note that by the way the other segments $P_i$ were chosen, we have $|V_t|-\beta n\leq |P_t|\leq |V_t|$.

\smallskip

\tbf{Step 2: Setting up the connections and fixing the imbalance.}  
Ideally we would like to have had that $|P_i|=|V_i|$ for all $i \in [t]$. Then we could embed $P_1$ fully into $V_1$ and then jump over to $V_2$ and embed $P_2$ there, and so forth, to obtain an embedding of $C$ into $G$.
However, we  have that $|P_i| \leq |V_i|+\beta n$, and in particular there may be a gap between 
$\sum_{i=1}^s|P_i|$ and $\sum_{i=1}^s|V_i|$ for some of the 
$s\in [t-1]$.

As such, in this step we will embed constantly-many  segments of $C$ into $G$ in such a way that along $C$ there is at least a large constant gap between consecutive pairs of such segments.  The goal is to make all of the required connections between $V_i$ and $V_{i+1}$ for each $i \in [t-1]$, while at the same time correcting the gap between $\sum_{i=1}^s|P_i|$ and $\sum_{i=1}^s|V_i|$ for each $s\in [t-1]$.

We begin by using Observation~\ref{obs:bip} to obtain a matching $M$ in $G$ consisting of one edge $v_1 v_n=v_{n_0+1} v_{n_t}$ from $V_1$ to $V_t$, and for all $i\in [t-1]$, one edge $v_{n_i} v_{n_i+1}$ from $V_i$ to $V_{i+1}$.  For each $i\in [t]$, set $f(x_{n_i}):=v_{n_i}$, and for each $i\in [t-1]\cup \{0\}$, set
$f(x_{n_i+1}):=v_{n_i+1}$.

For each $s\in [t-1]$,  set $d_s:=\sum_{j=1}^s|P_j|-\sum_{j=1}^{s}|V_j|\geq 0$.  If $d_s=0$ for all $s \in [t-1]$, we move on to the next step; so suppose $d_s>0$ for some $s \in [t-1]$, and consider any such $s$.  The minimality of $P_s$ combined with the fact that we are in  Case~2 implies that $x_{n_s}$ is source in $C$ and $d_s\leq \beta n$.   Let $a_s+1<n_s$ be a maximal index such that $x_{a_s+1}$ is a sink in $C$.  Note that $P^*_s:=x_{n_s}x_{n_s-1}\dots x_{n_s-d_s}\dots x_{a_s+1}$ is a directed segment on at most $\beta n$ vertices (and on at least $d_s+1$ vertices).

We now have two subcases that we first describe informally: Either $0<d_s\leq \frac{\eta}{6\beta}$ and we are able to fix the imbalance by embedding $d_s$  sinks from $P_s$ in $V_{s+1}$ (in a way that will be made precise shortly), or $\frac{\eta}{6\beta}<d_s\leq \beta n$ and we use $\floor{\frac{\eta}{12\beta}}$  sinks as before, together with the segment $P^*_s=x_{n_s}x_{n_s-1}\dots x_{a_s+1}$ to fix the imbalance (roughly by embedding $x_{a_s+d_s-\floor{\frac{\eta}{12\beta}}}\dots x_{a_s+1}$ in $V_{s+1}$, $x_{a_s}$ in $V_s$, and $x_{n_s}\dots x_{a_s+d_s-\floor{\frac{\eta}{12\beta}}+1}$ in $V_s$). We now make these subcases precise.

\begin{figure}[ht]
   \centering
    \begin{subfigure}[t]{0.5\textwidth}
       \centering
         \includegraphics[scale=1]{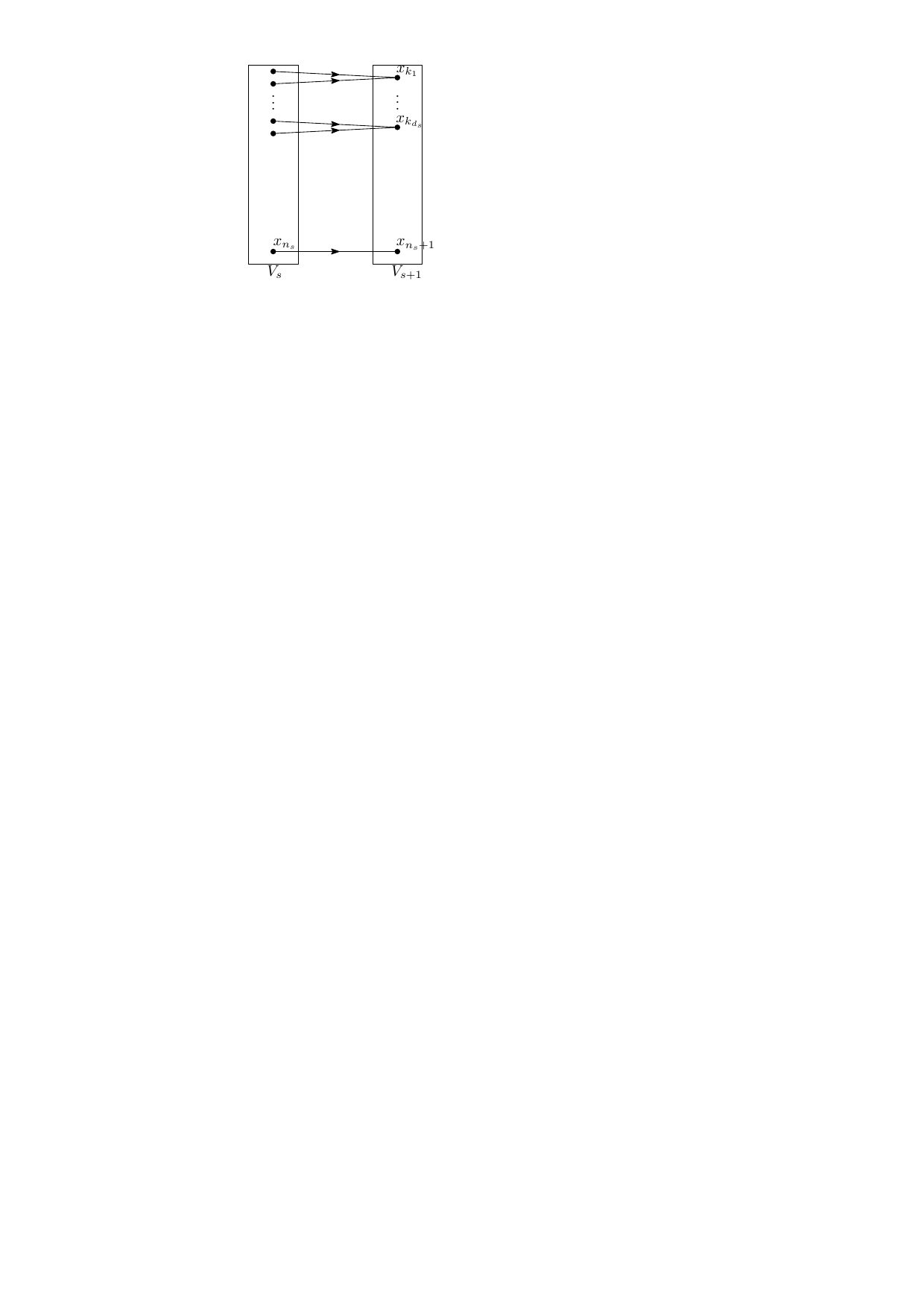} 
\caption{$0<d_s\leq \frac{\eta}{6\beta}$}\label{fig:case2i}
    \end{subfigure}%
    ~ 
    \begin{subfigure}[t]{0.5\textwidth}
       \centering
         \includegraphics[scale=1]{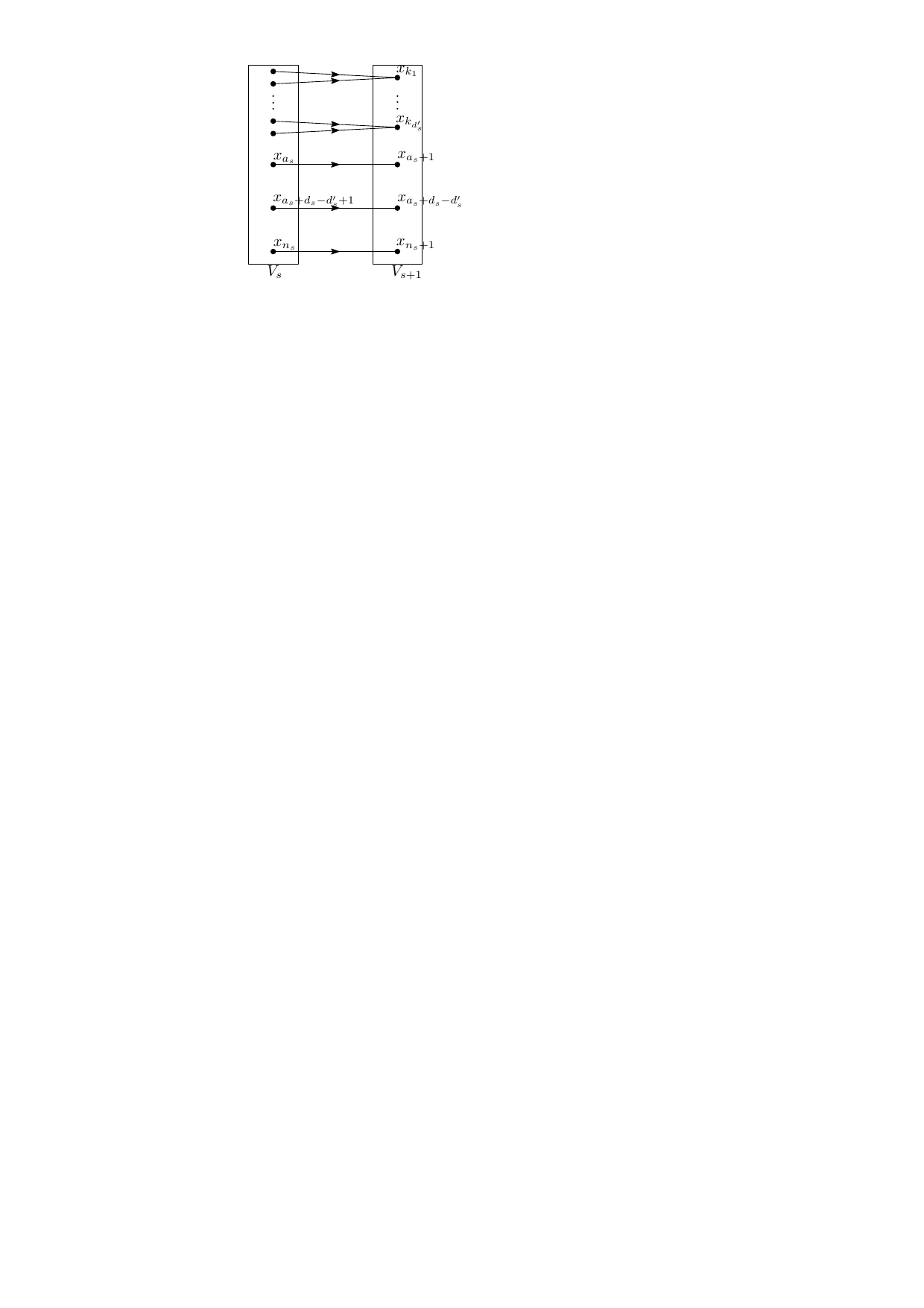} 
\caption{$\frac{\eta}{6\beta}<d_s\leq \beta n$}\label{fig:case2ii}
    \end{subfigure}
    \caption{Case 2, Step 2}\label{fig:case2}
\end{figure}

First suppose that $0< d_s\leq \frac{\eta}{6\beta}$.
Since $|V_s|\geq \eta n$, how we defined $n_s$ and $n_{s-1}$, together with \eqref{eq:switchy}, implies that $n_s -n_{s-1} \geq \eta n -2\beta n$.
Using this with \eqref{eq:switchy}  ensures that we can select $d_s$ sinks $x_{k^s_1}, \dots, x_{k^s_{d_s}}$ on $P_s$ 
where 
\begin{itemize}
    \item[(D1)] $n_{s-1}+2\beta n< k^s_1<k^s_2<\dots <k^s_{d_s}< n_s-2\beta n$;
    \item[(D2)] $x_{k^s_1}, \dots, x_{k^s_{d_s}}$ do not lie on the directed segment $P^*_s$;
    \item[(D3)] for all $1\leq i<j\leq d_s$ we have that $k^s_j-k^s_i\geq \beta n$.
\end{itemize}
In fact, notice (D2) follows immediately from (D1) and the definition of $P^*_s$.

   We now use Observation~\ref{obs:bip} to find $d_s$ disjoint subgraphs $\Lambda^s_1, \dots , \Lambda^s_{d_s}$ of $G$ where, for each $i \in [d_s]$,
   $V(\Lambda^s_i)=:\{v_{k^s_i-1}, v_{k^s_i+1}, v_{k^s_i}\}$ with $v_{k^s_i-1}, v_{k^s_i+1}\in V_s$ and $v_{k^s_i}\in V_{s+1}$ such that $E(\Lambda^s_i)=\{v_{k^s_i-1} v_{k^s_i}, v_{k^s_i+1} v_{k^s_i} \}$ (see Figure~\ref{fig:case2i}).
   Further, each $\Lambda^s_i$ is disjoint from the matching $M$ we previously selected in $G$ and disjoint from any other $\Lambda^{t}_i$'s we may have chosen previously (for $t<s$).
   Now for all $i\in [d_s]$, set $f(x_{k^s_i}):=v_{k^s_i}$, $f(x_{k^s_i-1}):=v_{k^s_i-1}$, and $f(x_{k^s_i+1}):=v_{k^s_i+1}$.  
   This ensures that we will embed precisely $d_s$ vertices of $P_s$ into $V_{s+1}$.

Next suppose that $\frac{\eta}{6\beta}<d_s\leq \beta n$.  Set $d_s':=\floor{\frac{\eta}{12\beta}}$. Analogously to before, select $d_s'$ sinks $x_{k^s_1}, \dots, x_{k^s_{d_s'}}$ on $P_s$ satisfying (D1)--(D3).
 We then use Observation~\ref{obs:bip} to find $d_s'$
 disjoint subgraphs $\Lambda^s_1, \dots, \Lambda^s_{d'_{s}}$ of $G$ where, for each $i \in [d'_s]$,
   $V(\Lambda^s_i)=:\{v_{k^s_i-1}, v_{k^s_i+1}, v_{k^s_i}\}$ with $v_{k^s_i-1}, v_{k^s_i+1}\in V_s$ and $v_{k^s_i}\in V_{s+1}$ such that $E(\Lambda^s_i)=\{v_{k^s_i-1} v_{k^s_i}, v_{k^s_i+1} v_{k^s_i} \}$. 
   Further, each $\Lambda^s_i$ is disjoint from the matching $M$ we previously selected in $G$ and disjoint from any other $\Lambda^{t}_i$'s we may have chosen previously (for $t<s$).
   Now for all $i\in [d'_s]$, set $f(x_{k^s_i}):=v_{k^s_i}$, $f(x_{k^s_i-1}):=v_{k^s_i-1}$, and $f(x_{k^s_i+1}):=v_{k^s_i+1}$.

Next we use Observation~\ref{obs:bip} to find disjoint edges $v_{a_s} v_{a_s+1}$ and $v_{a_s+d_s-d_s'+1} v_{a_s+d_s-d_s'}$ from $V_s$ to $V_{s+1}$ in $G$ (that are disjoint from $M$ and all the $\Lambda^s_i$).  Set $f(x_{a_s}):=v_{a_s}$, $f(x_{a_s+1}):=v_{a_s+1}$, $f(x_{a_s+d_s-d_s'+1}):=v_{a_s+d_s-d_s'+1}$, and $f(x_{a_s+d_s-d_s'}):=v_{a_s+d_s-d_s'}$ (see Figure~\ref{fig:case2ii}).

Thus, when we finish the embedding $f$ of $C$ into $G$ in Step~3,  precisely 
$d_s$ vertices from $P_s$ will be embedded into $V_{s+1}$: $d'_s$ vertices playing the roles of the sinks $x_{k^s_1}, \dots, x_{k^s_{d_s'}}$ and  the $d_s-d'_s$ vertices that are in the segment 
$x_{a_s+d_s-d_s'}\dots x_{a_s+1}$ of $C$ (which lies fully in $P^*_s$). Note too that, since $|P^*_s|\geq d_s$, this segment $x_{a_s+d_s-d_s'}\dots x_{a_s+1}$ is of distance at least $d_s'=\floor{\frac{\eta}{12\beta}}$ from $x_{n_s}$ in $C$. This property is vital in ensuring we can apply Theorem~\ref{thm:link} in Step~3.

\smallskip

\tbf{Step 3: Finishing the embedding.}  At this point we have embedded only a constant number of  segments of $C$ with the key property that between any two already embedded segments of $C$, there is at least a large constant gap (at least $d_s'=\floor{\frac{\eta}{12\beta}}$) between the ends of the segments.  In particular, this is ensured by (D1), (D3), and (in the case when $\frac{\eta}{6\beta}<d_s\leq \beta n$) by the choice of the segment $x_{a_s+d_s-d_s'}\dots x_{a_s+1}$ of $P^*_s$ that we have chosen to be embedded into $V_{s+1}$.
For each $i \in [t]$, let $V'_i $ be the subset of $V_i$ obtained by deleting all middle (sink) vertices in the subgraphs $\Lambda^s_j$ of $G$ obtained in Step~2. Note that
$|V_i\setminus V'_i|\leq \frac{\eta}{6\beta}$; thus, each $G[V'_i]$ is a robust $(\nu/2, 2\tau)$-outexpander with $\delta ^0 (G[V'_i]) \geq \eta n/2$.
Therefore, to finish the embedding $f$, we apply Theorem~\ref{thm:link} to each $G[V'_i]$ to embed the remaining segments of $C$ into the appropriate places.
\end{proof}

\section{Concluding remarks}\label{sec:conc}
In this paper we have asymptotically determined the minimum degree threshold for forcing an arbitrary orientation of a Hamilton cycle in a digraph (Theorem~\ref{thm:main}). It would be interesting to obtain an exact version of this result. 

\begin{problem}\label{proby}
If $G$ is a sufficiently large $n$-vertex digraph  with $\delta(G)\geq n+1$, then does $G$ contain every orientation of a Hamilton cycle (except for the directed Hamilton cycle when $G$ is not strongly connected)?
\end{problem}
Recall that one cannot lower the minimum degree condition in Problem~\ref{proby} for anti-directed Hamilton cycles~\cite{Cai}.
However, it would  be interesting to know if a minimum degree of $\delta(G)\geq n$ is already enough to force every orientation of a Hamilton cycle, other than the directed and anti-directed ones.

\smallskip

As mentioned in the introduction, a positive answer to the following problem would provide a common generalization of Havet and Thomass\'e's tournament theorem~\cite{HavT} and Corollary~\ref{ghcor}.

\begin{problem}
If $G$ is a sufficiently large $n$-vertex digraph  with $\delta(G)\geq n-1$, then does $G$ contain every orientation of a Hamilton path?
\end{problem}
From the perspective of obtaining results that unify the minimum degree setting and the setting of tournaments, it may also be interesting to find a characterization of all those (strongly connected)
 $n$-vertex digraphs with $\delta(G) \geq n-1$ that do not contain every orientation of a Hamilton cycle.

\smallskip

Finally, we note that Proposition~\ref{prop:structure} provided crucial structure that we exploited in the proof of Theorem~\ref{thm:main}. This tool is likely to have several further applications; we will explore this in future work.

\smallskip

{\noindent \bf Open access statement.}
	This research was funded in part by  EPSRC grants  EP/V002279/1 and EP/V048287/1. For the purpose of open access, a CC BY public copyright licence is applied to any Author Accepted Manuscript arising from this submission.

{\noindent \bf Data availability statement.}
There are no additional data beyond that contained within the main manuscript.

\bibliographystyle{abbrv}
\bibliography{references.bib}


\section{Appendix: Proofs of Theorems~\ref{cor2} and~\ref{thm:link}}\label{sec:appendix}
\subsection{Deriving Theorem~\ref{thm:link}}
In this subsection we prove  Theorem~\ref{thm:link}. Before this, we introduce the  main results used to obtain Theorem~\ref{thm:link}.

The following lemma of Taylor~\cite{Tay} allows us to find any not too short, not too long oriented path between any pair of vertices in a robust outexpander.\footnote{Formally one applies Proposition~48 and Lemma~58 from~\cite{Tay} to obtain Lemma~\ref{prop:shortConnect}.} 

\begin{lemma}[Taylor~\cite{Tay}]\label{prop:shortConnect}
Let $0<1/n_0 \ll \nu \leq \tau \ll \eta \ll 1$, let $n \geq n_0$, and let $\lceil 2/\nu ^3 \rceil \leq k \leq \nu^3 n/4$. If $G$ is an $n$-vertex digraph with $\delta^0(G) \geq \eta n$ such that $G$ is a robust $(\nu,\tau)$-outexpander, then for all distinct  $x,y \in V(G)$ and all oriented paths $P$ of length $k$, there is a copy of $P$ in $G$ that starts at $x$ and ends at $y$.
\end{lemma}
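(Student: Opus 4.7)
The plan is to combine greedy incremental embedding, exploiting the minimum semi-degree $\delta^0(G)\geq \eta n$, with a terminal closing step that uses the robust outexpansion of $G$ to hit $y$ exactly. Throughout, one decomposes the oriented path $P$ as a concatenation $P^{\text{init}}P^{\text{close}}$, where $P^{\text{close}}$ has length $\Theta(1/\nu^3)$ and the initial segment $P^{\text{init}}$ takes up the bulk of $P$; the task splits into embedding the long segment greedily and then ``closing'' to $y$ within the short tail.

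For the greedy phase, starting from $x$ I would embed the vertices of $P^{\text{init}}$ one at a time, respecting the orientation: at each step $t$, if the $(t{+}1)$-th edge of $P$ points forwards then the next vertex must be an out-neighbor of the current endpoint, and otherwise an in-neighbor. Since $\delta^0(G)\geq \eta n$ while the number of used vertices is at most $k\leq \nu^3 n/4\ll \eta n$, there are always at least $\eta n/2$ candidates. More importantly, rather than committing to one vertex per step, I would inductively maintain a linear-sized \emph{set} $F_t\subseteq V(G)$ of possible current endpoints obtained by following many parallel greedy extensions; a standard averaging/pruning argument keeps $|F_t|\geq \tau n$ throughout.

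For the closing phase, I would exploit the defining expansion bound $|RN^+_\nu(S)|\geq |S|+\nu n$. Starting from $F_t$ with $|F_t|\geq \tau n$ and iterating the robust-neighborhood operator in accordance with the orientation of the next edge of $P^{\text{close}}$, after $O(1/\nu)$ steps one obtains a set $A$ of size $\geq (1-\tau)n$ consisting of vertices reachable from $F_t$ by a copy of an appropriate prefix of $P^{\text{close}}$. Running the same procedure backwards from $y$ along the reversed orientation of the corresponding suffix of $P^{\text{close}}$ produces a second large set $B$ of size $\geq (1-\tau)n$. The two sets $A$ and $B$ must overlap, and any vertex $z\in A\cap B$ yields, by concatenation, a copy of $P^{\text{close}}$ from some vertex in $F_t$ to $y$. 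Gluing this to the greedy segment gives the desired copy of $P$ from $x$ to $y$.

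The main obstacle is the directional asymmetry of the definition: robust outexpansion is stated only for out-neighborhoods, but edges of $P$ can point either way, so we need a \emph{robust in-expansion} analogue as well. This follows by applying the outexpansion statement to the reverse digraph, or alternatively by a Hall/flow argument that converts a bound on $|RN^+_\nu(S)|$ into a bound on the dual robust in-neighborhood. Interleaving the forward and backward versions so that expansion compounds correctly over the exact orientation pattern of $P^{\text{close}}$, while simultaneously keeping track of the used vertices and avoiding collisions with $y$, is the technical heart of the proof and essentially isolates what Taylor's Proposition~48 and Lemma~58 deliver.
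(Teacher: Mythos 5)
The paper does not actually prove Lemma~\ref{prop:shortConnect}: it is imported as a black box from Taylor's work (the footnote points to Proposition~48 and Lemma~58 of \cite{Tay}), so there is no in-paper argument to compare against. Your outline is nonetheless the standard way such connection lemmas are proved --- greedily embed the bulk of $P$ using $\delta^0(G)\geq \eta n$, then realize a short terminal segment by growing robust neighborhoods from both sides and meeting in the middle --- and this is essentially what the cited results deliver, so the architecture is appropriate.

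Two points need repair. First, your primary justification for robust \emph{in}-expansion (``apply the outexpansion statement to the reverse digraph'') is circular: the reverse digraph being a robust outexpander is exactly the assertion that $G$ is a robust inexpander, which is what you are trying to establish. The correct route is your alternative, a counting argument: if $|RN^-_{\nu'}(S)|<|S|+\nu' n$, then the complement $T$ of $RN^-_{\nu'}(S)$ sends fewer than $\nu' n|T|$ edges into $S$, which bounds $|S\cap RN^+_{\nu}(T)|$ by roughly $\nu' n/\nu$ and contradicts $|RN^+_\nu(T)|\geq |T|+\nu n$ once $\nu'$ is taken polynomially smaller than $\nu$; this parameter loss is consistent with the lemma demanding $k\geq\lceil 2/\nu^3\rceil$ rather than merely $k\gtrsim 1/\nu$. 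Second, maintaining a linear-sized family $F_t$ of parallel greedy extensions creates a disjointness problem you do not resolve: when the closing path is traced back to some $f\in F_t$, the prefix embedding attached to $f$ may collide with the closing path, and a priori a single vertex could lie in the image of almost every parallel extension, so ``$\geq\nu n$ in-neighbors in $F_t$'' does not immediately give a usable predecessor. The cleaner standard fix is to commit to a single greedy prefix vertex by vertex (possible since at most $k\leq\nu^3 n/4$ vertices are ever used, far fewer than $\eta n$), and then run the bidirectional expansion argument from its now-fixed endpoint to $y$ inside $G$ minus the used vertices, which is still a robust outexpander with marginally worse parameters. With these repairs your sketch is sound.
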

Lemma~\ref{prop:shortConnect} will allow us to  find all the required short oriented paths in the proof of Theorem~\ref{thm:link}.
The next two results will enable us to construct the longer oriented paths. The first of these results 
generalizes~\cite[Lemma 60]{Tay}.

\begin{lemma}[Splitting robust expanders]\label{lem:splitRobust}
Let $0<{1}/{n_0}\ll  \nu' \ll \ep  \ll \nu \ll \tau \ll \tau ' \ll \eta$
and let $n\geq n_0$ be an integer. Let $m_0, m_1, \dots, m_t\in \mathbb N\cup \{0\}$  such that for all $i\in [t]$, $m_i\geq \ep^{1/3} n$;  $m_0\leq \eps ^{1/4} n$; $m_0 +m_1+\dots+m_t=n$. Suppose that $G$ is an $n$-vertex digraph with $\delta^0(G)\geq \eta n$ and $G$ is a robust $(\nu, \tau)$-outexpander. If $W_0 \subseteq V(G)$ is any set of $m_0$ vertices, then there is a partition $\{W_0, W_1, \dots, W_t\}$ of $V(G)$ such that for all $i\in [t]$:
\begin{itemize}
    \item   $|W_i|=m_i$ and $G[W_i]$ is a robust $(\nu', \tau')$-outexpander;
    \item  $d^+_G(x,W_i), d^-_G(x,W_i) \geq \eta m_i/4$ for every  $x \in V(G)$.
\end{itemize}
\end{lemma}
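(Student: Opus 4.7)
The plan is to construct the partition by randomly assigning each vertex of $V(G) \setminus W_0$ to one of the parts and then verify all the required properties via Chernoff-type concentration, correcting small size discrepancies with a few swaps.

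Concretely, for each $v \in V(G) \setminus W_0$, independently place $v$ into $W_i$ with probability $p_i := m_i / (n - m_0)$. Then $\EE[|W_i|] = m_i$, and for each $x \in V(G)$ and $i \in [t]$, $\EE\bigl[d^\pm_G(x, W_i)\bigr] = p_i \cdot d^\pm_G(x, V(G) \setminus W_0) \geq (1 - o(1))\eta m_i$, using $d^\pm_G(x) \geq \eta n$ and $m_0 \leq \ep^{1/4} n$. A Chernoff bound and a union bound over the $O(nt)$ size and degree events give that, with probability bounded away from $0$, $\bigl| |W_i| - m_i \bigr| \leq \ep n$ for every $i \in [t]$, and $d^+_G(x, W_i), d^-_G(x, W_i) \geq \eta m_i / 3$ for every $x \in V(G)$ and $i \in [t]$.

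The heart of the argument is showing that each $G[W_i]$ inherits a robust $(\nu', \tau')$-outexpander structure. Fix $i \in [t]$ and a set $S \subseteq V(G) \setminus W_0$ with $\tau' m_i \leq |S| \leq (1 - \tau') m_i$, and split into two regimes. If $|S| \geq \tau n$, then the $(\nu, \tau)$-robust outexpansion of $G$ yields $|RN^+_\nu(S)| \geq |S| + \nu n$; conditional on $S \subseteq W_i$, the indicators $\mathbf{1}[v \in W_i]$ for $v \in RN^+_\nu(S) \setminus S$ are independent Bernoulli$(p_i)$, so Chernoff gives $|RN^+_\nu(S) \cap W_i| \geq |S| + \nu m_i / 2$ with failure probability $\exp(-\Omega(\nu m_i))$, whence $|RN^+_{\nu'}(S) \cap W_i| \geq |S| + \nu' m_i$ since $\nu' \ll \nu$. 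If instead $|S| < \tau n$, a double count yields $\sum_{v \in W_i} d^-_{G[W_i]}(v, S) \geq |S| \cdot \eta m_i / 3$, which (comparing against the trivial bound $|S|$ per vertex in $RN^+_{\nu'}$ and $\nu' m_i$ per vertex outside) forces $|RN^+_{\nu'}(S) \cap W_i| \geq \eta m_i / 4 \geq |S| + \nu' m_i$, the last inequality following from $|S| < \tau n \ll \eta m_i$ (ensured by the hierarchy via $m_i \geq \ep^{1/3} n$ and $\tau \ll \eta \ep^{1/3}$). A union bound over all such $S$ and $i$ completes this step, with the Chernoff tails dominating the entropy factor $\binom{n}{|S|}$ because $\nu'$ is chosen sufficiently small in terms of $\ep$ and $\nu$.

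Finally, correct each $|W_i|$ to exactly $m_i$ by moving $O(\ep n)$ vertices between parts; since the slacks in the degree and expansion conditions are of order $\eta m_i$ and $\nu' m_i$, both much larger than $\ep n$, all properties are preserved. The main obstacle throughout is the union bound over exponentially many subsets $S$ in the robust-expansion step; handling the two regimes (large $|S|$ via $G$'s expansion and small $|S|$ via high minimum semi-degree) and calibrating the hierarchy so that the Chernoff concentration dominates is where the balance $\nu' \ll \ep \ll \nu \ll \tau \ll \tau'$ is essential.
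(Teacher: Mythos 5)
Your random-assignment setup and the degree/size concentration are fine, but the heart of the argument---inheritance of robust outexpansion by $W_i$---has genuine gaps in both regimes. In the regime $|S|\geq \tau n$: conditioning on $S\subseteq W_i$, the expected value of $|RN^+_\nu(S)\cap W_i|$ is $|RN^+_\nu(S)\cap S|+p_i\,|RN^+_\nu(S)\setminus S|$. Robust expansion of $G$ only gives $|RN^+_\nu(S)|\geq |S|+\nu n$; it does \emph{not} give $S\subseteq RN^+_\nu(S)$, so $|RN^+_\nu(S)\cap S|$ may be $0$, in which case the expectation is about $p_i(|S|+\nu n)=\tfrac{m_i}{n-m_0}|S|+\nu m_i$, which is far below $|S|$ when $m_i\ll n$ and $|S|=\Theta(m_i)$. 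No concentration inequality can then deliver $|RN^+_\nu(S)\cap W_i|\geq |S|+\nu m_i/2$: the ``$+\nu n$'' surplus survives the dilution but the base term $|S|$ does not. Separately, even where the expectation were adequate, the union bound fails: the lower-tail Chernoff probability is $e^{-\Omega(\nu m_i)}$ while the number of relevant sets $S$ of size $s=\Theta(m_i)$ is $e^{\Theta(m_i\log(1/\tau'))}$, and since $\nu\ll\tau'$ the entropy wins; making $\nu'$ smaller only \emph{weakens} the tail bound, so the calibration you invoke goes the wrong way. In the regime $|S|<\tau n$, your double count gives $|RN^+_{\nu'}(S)\cap W_i|\gtrsim \eta m_i/4$, which only beats $|S|+\nu' m_i$ when $|S|\lesssim \eta m_i/4$; but $|S|$ ranges up to $(1-\tau')m_i$, and the inequality ``$\tau\ll\eta\eps^{1/3}$'' you cite is the reverse of what the hierarchy provides ($\eps$ is chosen \emph{after} $\tau$, so $\eps^{1/3}n$ may be far smaller than $\tau n$). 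Thus all $S$ with $\eta m_i/4\lesssim |S|\leq (1-\tau')m_i$ are covered by neither regime.

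These difficulties are exactly why the paper's proof routes through the degree form of the regularity lemma for digraphs rather than a direct union bound. There one fixes a regular partition of $G\setminus W_0$ into clusters, observes that a random $W_i$ meets every cluster proportionally (a union bound over only $O(kn)$ events), and concludes that the reduced digraph of $G[W_i]$ coincides with that of $G\setminus W_0$. Robust outexpansion passes up to the reduced digraph and then back down to each $G[W_i]$; this transference is insensitive to the dilution factor $m_i/n$ because expansion in the reduced digraph is a property of clusters, not of individual vertices. To repair your argument you would need some such mechanism (regularity, or a sparsification statement tailored to robust expanders); the direct two-regime Chernoff argument as written does not close.
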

Lemma~60 in~\cite{Tay} yields the $W_0=\emptyset$, $t=2$ case of the lemma. In fact, the argument there can easily be tweaked to prove Lemma~\ref{lem:splitRobust}. As such, we only provide a detailed proof sketch.
\begin{proof}[Sketch proof of Lemma~\ref{lem:splitRobust}]
    Define an additional constant $d$ so that 
    $$0<{1}/{n_0}\ll  \nu' \ll \ep \ll d \ll \nu \ll \tau \ll \tau ' \ll \eta.$$
    Let $G$ be a digraph on $n \geq n_0$ vertices as in the statement of the lemma. Let $G':=G \setminus W_0$ and set $n':=|G'|$.
    As $|W_0|\leq \eps^{1/4}n$, $G'$ is a robust $(\nu /2, 2\tau)$-outexpander with $\delta^0(G')\geq \eta n'/2$.

    Apply the regularity lemma for digraphs to $G'$ with parameters  $\eps, d$ (see, e.g.,~\cite[Lemma~39]{Tay} for the version of the regularity lemma that we use). We therefore obtain a partition of $V(G')$ into clusters $V_1,\dots, V_k$ of the same size $m$ and an exceptional set $V_0$, with $|V_0|\leq \eps n'$. Crucially, for every distinct $i,j \in [k]$, $(V_i,V_j)$ forms an $\eps$-regular pair in $G'$ of density either $0$ or at least $d$.

We now randomly partition $V(G')$ into classes $W_1, \dots, W_t$ where 
$|W_i|=m_i$ for all $i \in [t]$. By Chernoff's bound for the hypergeometric distribution, with high probability,  for each $i \in [t]$ and $x \in V(G)$ we have 
$d^+_G(x,W_i), d^-_G(x,W_i) \geq \eta m_i/4$.
    Moreover, with high probability, for each $i \in [t]$ and
    $j \in [k]$, we have 
    $$|W_i \cap V_j|\geq (1-\eps) m \cdot \frac{m_i}{n} \geq \eps ^{1/2} m.$$
    For each $i \in [t]$ and
    $j \in [k]\cup \{0\}$, set $V^i _j := W_i \cap V_j$.
    Thus, $V^i_0, V^i _1, \dots, V^i _k$ is a partition of $W_i$. Moreover, as $|V^i_j|\geq \eps ^{1/2} m$ for each $j \in [k]$,
    we have the following property:
    \begin{itemize}
        \item[(P1)] If $(V_{j_1},V_{j_2})$ forms an $\eps$-regular pair of density at least $d$ in $G'$, then $(V^i_{j_1},V^i_{j_2})$ forms an 
        $\eps^{1/2}$-regular pair of density at least $d-\eps$ in $G'[W_i]$.
    \end{itemize}
    In particular, (P1) implies that the reduced digraph $R_i$ of $G'[W_i]$ is the same as the reduced digraph $R$ of $G'$. As $G'$ is a robust $(\nu /2, 2\tau)$-outexpander with $\delta^0(G')\geq \eta n'/2$, \cite[Lemma~50]{Tay} implies that $R$ is a 
    robust $(\nu /4, 4\tau)$-outexpander with $\delta^0(R)\geq \eta k/4$.

    Finally, as argued at the end of the proof of Lemma~60 in~\cite{Tay},  since the reduced digraph $R_i=R$ of $G'[W_i]$ is a  robust $(\nu /4, 4\tau)$-outexpander, this implies that 
    $G'[W_i]=G[W_i]$ is a robust $(\nu ', \tau ')$-outexpander. 
    Indeed, if one considers any set $S \subseteq W_i$ where 
    $\tau ' m_i \leq |S| \leq (1-\tau ')m_i$, $S$ must intersect 
    many $V^i _1, \dots, V^i _k$ significantly (certainly more 
    than $\tau k$ of these classes). Let $Q$ be the set of such 
    significantly intersected $V^i _j$. 
    In particular, we have 
    that the robust outneighborhood $RN^+ _{\nu /4} (Q)$ of $Q$ in 
    $R_i$ satisfies  $|RN^+ _{\nu /4} (Q)| \geq |Q| + \nu k /4$. 
    Moreover, most vertices in each class $V^i_j \in RN^+ _{\nu /4} (Q)$ lie in the $\nu '$-robust out-neighborhood $RN^+ _{\nu '} (S)$ of $S$ in $G[W_i]$. 
    A simple calculation now implies that $|RN^+ _{\nu '} (S)|\geq |S| +\nu ' m_i$, as desired.
\end{proof}

We also require the following generalization of Theorem~\ref{thm:ham}.
\begin{theorem}[Universally Hamilton connected]\label{thm:hamCon}
Let $0<{1}/{n_0}\ll \nu \ll \tau\ll \eta$ and let $G$ be a digraph on $n\geq n_0$ vertices.  If $\delta^0(G)\geq \eta n$ and $G$ is a robust $(\nu, \tau)$-outexpander, then for all distinct $x, y\in V(G)$ and any oriented path $P$ on $n$ vertices, there exists a copy of $P$ in $G$ that starts at $x$ and ends at $y$.
\end{theorem}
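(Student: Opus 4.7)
The plan is to adapt the proof of Taylor's Theorem~\ref{thm:ham} from~\cite{Tay} to produce an oriented Hamilton \emph{path} from $x$ to $y$ rather than a Hamilton cycle. Write $P=x_1\dots x_n$ with the target embedding $x_1\mapsto x$, $x_n\mapsto y$, and set $k_0:=\lceil 2/\nu^3\rceil$.

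The first step is to attach the endpoints. Using Lemma~\ref{prop:shortConnect} twice, I would find a copy $R_x$ of the initial segment $x_1\dots x_{k_0}$ starting at $x$ and ending at some vertex $x^\ast$, together with a copy $R_y$ of the final segment $x_{n-k_0+1}\dots x_n$ ending at $y$ and starting at some vertex $y^\ast$. These copies are chosen disjoint, and the remaining graph $G':=G[(V(G)\setminus(V(R_x)\cup V(R_y)))\cup\{x^\ast,y^\ast\}]$ is still a robust $(\nu/2,2\tau)$-outexpander with minimum semi-degree at least $\eta n/3$, since we remove only $2k_0-2$ vertices. It therefore suffices to embed the middle segment of $P$ as an oriented Hamilton path in $G'$ from $x^\ast$ to $y^\ast$.

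Reducing to the middle segment does not by itself close the argument, so the substantive step is to run Taylor's proof inside $G'$ with the two distinguished endpoints $x^\ast, y^\ast$. Taylor's proof proceeds by (i) setting aside a small reservoir, (ii) building an absorbing subgraph $A$ with the property that many small vertex subsets can be absorbed into $A$ to extend it appropriately, (iii) finding an almost-spanning oriented path using the target orientation, and (iv) closing everything up via the reservoir and absorber into a Hamilton cycle. The modifications I would make are: build $A$ as an absorbing oriented \emph{path} whose two endpoints are reserved for later attachment; use Lemma~\ref{lem:splitRobust} (with $W_0$ containing $\{x^\ast, y^\ast\}$) to split off the reservoir so that $x^\ast$ and $y^\ast$ remain in the main expander; and anchor the almost-spanning path at $x^\ast$ and $y^\ast$, using Lemma~\ref{prop:shortConnect} for the short connections through the reservoir rather than Taylor's cyclic closure.

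The main obstacle is the construction of the path-shaped absorber: one must build, for \emph{every} orientation of $P$, an absorbing substructure whose two fixed endpoints respect the prescribed local orientation at $x^\ast$ and $y^\ast$ while still absorbing arbitrary small leftover sets. The additional flexibility needed here is provided by Lemma~\ref{prop:shortConnect}, which yields short oriented paths of \emph{any} orientation between any two vertices, so the absorber's local patching moves have sufficient freedom; the remaining bookkeeping should be essentially cosmetic relative to Taylor's cyclic argument.
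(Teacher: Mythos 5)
Your opening reduction is circular: after attaching $R_x$ and $R_y$ you are left with precisely the original problem (embed a prescribed orientation of a spanning path of a robust outexpander between two prescribed vertices $x^\ast,y^\ast$), so the entire content of the theorem is concentrated in the step you defer --- building an absorbing oriented \emph{path} whose two endpoints are fixed and carry the prescribed local orientations, uniformly over \emph{every} orientation of $P$. Calling this ``essentially cosmetic'' is the gap. Lemma~\ref{prop:shortConnect} only supplies short oriented connections between specified vertices; it is a tool used \emph{inside} Taylor's argument, not a substitute for the closing/absorbing step, and it does not by itself yield an endpoint-anchored absorber compatible with an arbitrary switch pattern of $P$. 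As written, the pivotal construction is asserted rather than carried out, so the proposal does not constitute a proof.

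The paper's route avoids re-opening Taylor's argument almost entirely. It first records the mild strengthening Theorem~\ref{thm:hamCon2}: in Theorem~\ref{thm:ham} one may additionally prescribe the image of a \emph{single} vertex of the oriented cycle. This genuinely is immediate from Taylor's proof, since segments of the cycle are embedded greedily and one can arrange for the distinguished vertex to sit inside one such segment and be placed on the required vertex of $G$ via Lemma~\ref{prop:shortConnect}. It then contracts the two targets: form $G^*$ from $G$ by deleting $x$ and $y$ and adding a vertex $v_{xy}$ with $N^-_{G^*}(v_{xy})=N^+_G(x)\setminus\{y\}$ and $N^+_{G^*}(v_{xy})=N^+_G(y)\setminus\{x\}$ (in the case that $v_1,v_n$ are sources of their incident edges of $P$; the other endpoint orientations are handled analogously), and closes $P$ into an oriented cycle $C$ by replacing $v_1,v_n$ with a single new vertex $v_{1,n}$. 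The digraph $G^*$ is still a robust outexpander with linear minimum semi-degree, so Theorem~\ref{thm:hamCon2} produces a copy of $C$ in $G^*$ with $v_{1,n}$ on $v_{xy}$, and by construction this unfolds to a copy of $P$ in $G$ from $x$ to $y$. I would encourage you to look for a reduction of this shape --- prescribing one vertex of a cycle plus an endpoint contraction --- before committing to a wholesale modification of an absorption argument.
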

This result is easily derived from the following version of Theorem~\ref{thm:ham}.
\begin{theorem}\label{thm:hamCon2}
Let $0<{1}/{n_0}\ll \nu \ll \tau\ll \eta$ and let $G$ be a digraph on $n\geq n_0$ vertices.  Suppose that $\delta^0(G)\geq \eta n$ and $G$ is a robust $(\nu, \tau)$-outexpander. Let $C$ be any oriented cycle  on $n$ vertices and fix $x_C \in V(C)$. Given any $v_G \in V(G)$, there
is a copy of $C$ in $G$ in which $x_C$ is embedded onto $v_G$.\qed
\end{theorem}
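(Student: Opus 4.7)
The plan is to adapt the proof of Theorem~\ref{thm:ham} from Taylor~\cite{Tay} to yield the stronger statement where we additionally control the image of a single prescribed vertex. Recall that Taylor's proof proceeds, in broad strokes, by: (a) applying a digraph regularity lemma to $G$ to obtain clusters $V_1,\dots,V_k$ and an exceptional set $V_0$; (b) forming the reduced digraph $R$, which inherits slightly weaker robust outexpansion together with linear minimum semi-degree via~\cite[Lemma~50]{Tay}; (c) constructing a spanning ``blueprint'' walk in $R$ that respects the orientation pattern of $C$; and (d) lifting this blueprint to a copy of $C$ in $G$ by a regularity-based embedding, incorporating the exceptional vertices via short insertions along the way.

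The idea for the modification is to force $v_G$ into the exceptional set $V_0$, and to designate in advance that it must play the role of $x_C$ in the final embedding. Let $z_C,y_C$ be the two neighbors of $x_C$ in $C$, and observe that the orientations of the two edges of $C$ incident to $x_C$ are determined by $C$. Since $\delta^0(G)\ge\eta n$ and the exceptional set has size at most $\eps n\ll\eta n$, the vertex $v_G$ retains at least $\eta n/2$ in-neighbors and $\eta n/2$ out-neighbors among the clusters. By averaging, we may select clusters $V_i$ and $V_j$ (possibly equal) such that $v_G$ has linearly many neighbors in $V_i$ of the appropriate orientation to be the image of $z_C$, and linearly many in $V_j$ of the appropriate orientation to be the image of $y_C$.

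I would then execute the remainder of Taylor's procedure with the single additional constraint that when $v_G$ is incorporated into the embedding it is inserted at the position of $x_C$ in $C$. Concretely, this amounts to ensuring that the blueprint in $R$ passes through the pair $(V_i,V_j)$ (in the correct order and with the correct orientation) at the position corresponding to $x_C$, and then embedding $z_C$ into $V_i\cap N^{\pm}_G(v_G)$ and $y_C$ into $V_j\cap N^{\pm}_G(v_G)$, with the $\pm$ signs prescribed by the orientation of $C$ at $x_C$. Once this is done, the short regularity-based paths embedding the remaining segments of $C$ into the clusters are constructed exactly as in Taylor's argument, and $v_G$ is handled just as the other exceptional vertices are, except with a pre-specified location and image. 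The abundant supply of neighbors of $v_G$ in $V_i$ and $V_j$ provides enough slack for the regularity embedding to absorb this extra constraint.

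The main obstacle is arranging for the blueprint in $R$ to traverse between the prescribed clusters $V_i$ and $V_j$ at the prescribed position along $C$ and with the prescribed orientation. This is where the robust outexpansion of $R$ (together with its linear minimum semi-degree) is used: Taylor's blueprint is constructed from a Hamilton cycle in $R$ together with short ``shifted walks'' chosen to realize the desired orientation pattern, and the expansion is strong enough that the walk can be forced to pass through any designated pair of clusters at any designated position. Once this alignment has been achieved, the rest of the argument is identical to~\cite{Tay} and produces a copy of $C$ in $G$ in which $x_C\mapsto v_G$, as required.
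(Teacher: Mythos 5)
Your proposal is correct in spirit but takes a noticeably different (and heavier) route through Taylor's proof than the paper does. The paper's modification touches only the greedy segment-embedding step: in the proof of \cite[Theorem~49]{Tay}, the oriented cycle is cut into segments $Q_i$ that are embedded one at a time via Lemma~\ref{prop:shortConnect}, which already prescribes \emph{both} endpoints of each short oriented path. One therefore just aligns the segmentation so that $x_C$ sits roughly in the middle of some $Q_i$, splits $Q_i$ at $x_C$, and applies Lemma~\ref{prop:shortConnect} to the two halves (one ending at $v_G$, one starting at $v_G$); no change to the regularity partition, the reduced digraph, or the blueprint walk is needed. Your version instead forces $v_G$ into the exceptional set and constrains the position at which it is reinserted, which requires re-engineering the blueprint in $R$ so that it visits a designated ordered pair of clusters at a designated position of $C$ with the designated orientation. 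That alignment step is exactly the delicate point: in Taylor's argument the insertion positions of exceptional vertices are chosen flexibly to suit each vertex, and pinning one of them to a fixed position of $C$ (which may fall inside a long directed segment, at a switch, etc.) means re-checking the case analysis of the blueprint construction. Robust outexpansion of $R$ together with $\delta^0(G)\geq \eta n$ does plausibly give enough slack to do this, so I would not call your argument wrong, but it asks for substantially more verification of the internals of \cite{Tay} than the paper's one-line rerouting of a single $Q_i$ through $v_G$, which buys the same conclusion essentially for free.
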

Note that the proof of Theorem~\ref{thm:ham} in~\cite{Tay} immediately yields Theorem~\ref{thm:hamCon2}. Indeed, in this proof, when embedding an $n$-vertex oriented cycle $C$, segments $Q_i$ of the cycle  are embedded greedily (see Step~4 in the proof of Theorem~\ref{thm:ham} in~\cite[Theorem~49]{Tay}).
Thus, we may line things up so that one of the $Q_i$ contains $x_C$ (roughly in the middle of $Q_i$). We then (via Lemma~\ref{prop:shortConnect}) greedily construct a copy of $Q_i$ in $G$ that embeds $x_C$ onto $v_G$.

We now explain how Theorem~\ref{thm:hamCon2} implies Theorem~\ref{thm:hamCon}.

\begin{proof}[Proof of Theorem~\ref{thm:hamCon}]
Let $G$ and $P$ be as in the statement of the theorem.
Write $P=v_1\dots v_n$. We will define an auxiliary digraph $G^*$ whose precise construction depends on the orientations of the edges incident to $v_1$ and $v_n$ in $P$.
 For instance, suppose that $|N^+_P(v_1)|=|N^+_P(v_n)|=1$ (and $|N^-_P(v_1)|=|N^-_P(v_n)|=0$); the other cases can be handled analogously.
Construct the digraph $G^*$ from $G$ by deleting $x$ and $y$ and adding a vertex $v_{xy}$  so that (i) $N_{G^*} ^-(v_{xy}):=N^+_G(x)\setminus \{y\}$ and (ii) $N_{G^*} ^+(v_{xy}):=N^+_G(y)\setminus \{x\}$.
Note that $G^*$ is a robust $(\nu /2, 2\tau)$-outexpander with $\delta ^0(G^*)\geq \eta |G'|/2$.

Let $C$ be the oriented cycle obtained from $P$ by deleting its startpoint $v_1$ and its endpoint $v_n$, and adding a new vertex $v_{1,n}$ that receives an edge from $v_2$ and sends out an edge to $v_{n-1}$. By Theorem~\ref{thm:hamCon2} there is a copy of $C$ in $G^*$ in which $v_{1,n}$ is embedded onto $v_{xy}$. The definition of $N_{G^*} ^+(v_{xy})$ and $N_{G^*} ^-(v_{xy})$ now ensures that this copy of $C$ corresponds to a copy of $P$ in $G$ that starts at $x$ and ends at $y$, as desired.
\end{proof}


With the above results at hand, we can now easily prove Theorem~\ref{thm:link}.

\begin{proof}[Proof of Theorem \ref{thm:link}]
Define additional constants so that
$$0
<{1}/{n_0}\ll \nu' \ll \ep \ll \beta  \ll \nu \ll \tau \ll \tau ' \ll \eta.
$$
Let $n \geq n_0$ and
let $G$, $Q_1,\dots, Q_k$, $u_1,\dots,u_k,v_1,\dots, v_k$ be as defined in the statement of the theorem.

Since $\beta  \ll \nu$, certainly $|Q_i| \geq 1/\beta \geq  \ceil{2/\nu^3}$. In particular, we can repeatedly apply Lemma~\ref{prop:shortConnect} to obtain the required copy $P_i$ of $Q_i$ for each path $Q_i$ with $|Q_i| \leq \eps^{1/4} n$. (Formally, whenever constructing such a $P_i$, one should first delete all the other oriented paths $P_j$ constructed before, as well as all of $u_1,\dots,u_k,v_1,\dots, v_k$ other than $u_i$ and $v_i$, and then apply Lemma~\ref{prop:shortConnect} to the resulting subgraph of $G$. This ensures all the oriented paths $P_j$ are disjoint.)

For each oriented path $P_i$ already constructed, delete its vertices from $G$; let $G'$ be the resulting induced subgraph of $G$.
Note that at most $\frac{1}{\beta^2}\cdot \eps^{1/4} n \leq \eps ^{1/5} n$ vertices of $G$ have been deleted. Thus, $G'$ is 
a robust $(\nu/2, 2 \tau)$-outexpander with $\delta ^0(G)\geq \eta |G'|/2$.
By relabeling if necessary, we may assume that we still need to construct the copies of $Q_1,\dots, Q_t$ in $G$ for some $t \leq 1/\beta ^2$, and now $|Q_i|\geq \eps ^{1/4} n$ for all $i \in [t]$.

Set $W_0:= \{u_1,\dots, u_t, v_1,\dots, v_t\}$;
so certainly, $m_0:=|W_0| \leq \eps ^{1/4} |G'|$.
For each $i \in [t]$, define $m_i:=|Q_i|-2 \geq \eps^{1/4} n-2 \geq \eps ^{1/3}|G'|$. Thus, we can apply
Lemma~\ref{lem:splitRobust}
to obtain a partition $\{W_0, W_1, \dots, W_t\}$ of $V(G')$ such that for all $i\in [t]$:
\begin{itemize}
    \item   $|W_i|=m_i$ and $G'[W_i]$ is a robust $(\nu', \tau')$-outexpander;
    \item  $d^+_{G'}(x,W_i), d^-_{G'}(x,W_i) \geq \eta m_i/8$ for every  $x \in V(G')$.
\end{itemize}

Next add $u_i,v_i$ to $W_i$ for each $i \in [t]$. We now have that $G'[W_i]$ is a robust $(\nu'/2, 2\tau')$-outexpander with 
$\delta^0 (G'[W_i]) \geq \eta |W_i|/10$
(for each $i \in [t]$).
Finally, Theorem \ref{thm:hamCon} implies that in each $G'[W_i]$ we have the desired copy $P_i$ of $Q_i$ that starts at $u_i$ and ends at $v_i$. This completes the proof.
\end{proof}

\subsection{Proof of Theorem~\ref{cor2}}
In this subsection we prove Theorem~\ref{cor2}. For this, we will use the following strengthening of Lemma~\ref{prop:shortConnect}.
\begin{lemma}\label{prop:shortConnectgen}
Let $0<1/n_0 \ll \nu \ll \tau \ll \eta \ll 1$. Let $n \geq n_0$ and let $\lceil 2/\nu ^3 \rceil < k \leq n$.  If $G$ is an $n$-vertex digraph with $\delta^0(G) \geq \eta n$ such that  $G$ is a robust $(\nu,\tau)$-outexpander, then for all distinct $x,y \in V(G)$ and all oriented paths $P$ on $k$ vertices, there is a copy of $P$ in $G$ that starts at $x$ and ends at $y$.
\end{lemma}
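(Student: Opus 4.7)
This lemma strengthens Lemma~\ref{prop:shortConnect} by removing the upper bound $k\leq \nu^{3}n/4$ on the path length. Since the extreme case $k=n$ is already handled by Theorem~\ref{thm:hamCon}, the natural plan is to split the range $\lceil 2/\nu^{3}\rceil<k\leq n$ into three regimes and in each case reduce to one of these two results, using Lemma~\ref{lem:splitRobust} as a bridge. I would introduce auxiliary constants with $1/n_{0}\ll \nu'\ll \eps\ll \nu$ and $\tau\ll\tau'\ll\eta$ so that Lemma~\ref{lem:splitRobust} is applicable (with these playing the roles of $\nu',\eps,\tau'$ in that lemma).

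For $k\leq \nu^{3}n/4$ the conclusion is immediate from Lemma~\ref{prop:shortConnect}. For the intermediate regime $\nu^{3}n/4<k\leq n-\eps^{1/3}n$ I would invoke Lemma~\ref{lem:splitRobust} with $W_{0}:=\{x,y\}$, $t:=2$, $m_{1}:=k-2$, and $m_{2}:=n-k$; then $m_{0}=2\leq \eps^{1/4}n$, while $m_{1},m_{2}\geq \eps^{1/3}n$ by the case assumption and the choice $\eps\ll \nu$. Setting $W:=W_{1}\cup\{x,y\}$ gives $|W|=k$ with $x,y\in W$. The second conclusion of Lemma~\ref{lem:splitRobust} provides $d^{+}_{G}(v,W_{1}),d^{-}_{G}(v,W_{1})\geq \eta m_{1}/4$ for every $v\in V(G)$ (in particular for $v\in\{x,y\}$), and $G[W_{1}]$ is a robust $(\nu',\tau')$-outexpander. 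Adding the two vertices $x,y$ to $W_{1}$ changes every robust out-neighborhood by $O(1)$, so $G[W]$ is a robust $(\nu'/2,2\tau')$-outexpander with $\delta^{0}(G[W])\geq \eta |W|/10$. Applying Theorem~\ref{thm:hamCon} to $G[W]$ (viewing $P$ as an oriented Hamilton path of $G[W]$) then yields the required copy of $P$ from $x$ to $y$.

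For the remaining regime $n-\eps^{1/3}n<k\leq n$ I would pick any $U\subseteq V(G)\setminus\{x,y\}$ of size $n-k\leq \eps^{1/3}n$ and set $W:=V(G)\setminus U$. Since $|U|\ll \nu n$, a short standard argument shows $G[W]$ is a robust $(\nu/2,2\tau)$-outexpander with $\delta^{0}(G[W])\geq \eta n/2$, and Theorem~\ref{thm:hamCon} applied to $G[W]$ finishes the job.

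The main technical obstacle is the careful bookkeeping with the expansion parameters when passing from $G$ to $G[W]$: we need the resulting $(\nu'',\tau'')$-outexpander parameters to satisfy $\nu''\ll\tau''\ll\eta''$ so that Theorem~\ref{thm:hamCon} can be invoked. This is exactly what the hierarchy $\nu'\ll \eps\ll \nu\ll \tau\ll \tau'\ll \eta$ is designed for: the two small perturbations involved (adding the vertices $x,y$ back to $W_{1}$ in the intermediate case, or deleting at most $\eps^{1/3}n$ vertices from $G$ in the long case) change the relevant expansion constants only slightly, well within the gaps in the hierarchy.
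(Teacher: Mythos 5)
Your proposal is correct and follows essentially the same route as the paper's proof: the same three-way split of the range of $k$, the same use of Lemma~\ref{prop:shortConnect} for short paths, the same application of Lemma~\ref{lem:splitRobust} with $W_0=\{x,y\}$ followed by Theorem~\ref{thm:hamCon} in the intermediate regime, and the same deletion of a few vertices followed by Theorem~\ref{thm:hamCon} for nearly spanning paths. The only differences are immaterial constants in the degree and expansion parameters after the small perturbations.
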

\begin{proof}
Define additional constants so that
 $$0<{1}/{n_0}\ll \nu' \ll \ep \ll \nu \ll \tau \ll \tau ' \ll \eta.$$ 
Let $n\geq n_0$ and define $G$, $P$, $x$, $y$ as in the statement of the lemma.

If $\lceil 2/\nu ^3 \rceil < |P| \leq \nu^3 n/4$, then we are immediately done by Lemma~\ref{prop:shortConnect}.
So suppose that $|P| > \nu^3 n/4$.

If $\nu ^3 n/4 <|P|< (1-\eps^{1/3})n$, set 
 $W_0:=\{x,y\}$; $m_0:=|W_0|=2$; $m_1:=|P|-2 \geq \eps ^{1/3}n$; $m_2:=n-|P| \geq \eps^{1/3} n$. 
 Now we apply Lemma~\ref{lem:splitRobust} to obtain  sets $W_1, W_2 \subseteq V(G)$. Add $x$ and $y$ to $W_1$; we have that $G[W_1]$ is a robust $(\nu '/2, 2\tau')$-outexpander with $\delta ^0(G[W_1]) \geq \eta |W_1|/5$. Note that $|W_1|=|P|$. Theorem~\ref{thm:hamCon} now implies that $G[W_1]$ contains a copy of $P$ that starts at $x$ and ends at $y$.

Finally, if
 $|P|\geq  (1-\eps^{1/3})n$, consider any induced subgraph $G'$ of $G$ on $|P|$ vertices that contains $x$ and $y$. Then $G'$ is a robust $(\nu /2, 2\tau)$-outexpander with $\delta ^0(G') \geq \eta |G'|/2$. Theorem~\ref{thm:hamCon}  implies that $G'$ contains a copy of $P$ that starts at $x$ and ends at $y$, as desired.
\end{proof}

We are now ready to prove Theorem~\ref{cor2}.
\begin{proof}[Proof of Theorem~\ref{cor2}]
Define additional constants so that 
$$0<1/n_0 \ll \nu\ll\tau\ll\alpha\ll \zeta \ll \gamma, 1/k.$$
Let $n\geq n_0$ and let $G$ be an $n$-vertex digraph as in the statement of the theorem. Let $C$ be an oriented cycle on at most $n$ vertices that is not a directed cycle of length more than $\ceil {n/k}$. We will prove that $G$ contains $C$. We split into three cases.

\smallskip

\noindent \tbf{Case 1: $|C|\leq 1/ \nu ^4$.} 
Consider the (undirected) graph $G^*$ on vertex set $V(G)$ where $x$ and $y$ are adjacent in $G^*$ precisely if both $xy$ and $yx$ are edges in $G$.
Note that $\delta (G^*) \geq \gamma n $. Thus, e.g., the K\H{o}v\'ari--S\'os--Tur\'an theorem implies that $G^*$ contains every even length cycle of order up to $1/\nu^4$.
Hence, $G$ contains every oriented cycle of even length up to $1/\nu^4$.

We are therefore done unless $|C|$ is odd. However, in this case our argument above implies that $G$ contains a copy $C'$ of the `double edge' cycle on $|C|-1$ vertices. Let $x$ and $y$ be two neighboring vertices along $C'$. As $\delta (G) \geq (1+\gamma )n$, either
$d^+_G(x)+d^-_G(y) \geq (1+\gamma )n $ or 
$d^-_G(x)+d^+_G(y) \geq (1+\gamma )n $. Without loss of generality we may assume the former holds.
So there is some $z \in N^+_G (x) \cap N^- _G(y) \cap (V(G)\setminus V(C'))$.
Since every oriented cycle of odd order contains a vertex of in- and outdegree $1$, adding $z$ to $C'$ immediately yields a copy of $C$, as desired.

\smallskip

\noindent \tbf{Case 2: $ 1/ \nu ^4< |C| \leq \ceil {n/k}$.} 
In this case we apply Proposition~\ref{prop:structure} to obtain a partition $\{V_1,\dots, V_t\}$ of $V(G)$ into at most $k$ classes. In particular, for one of these classes $V_i$ we have that:
\begin{itemize}
    \item $|V_i| \geq \ceil {n/k}$;
    \item $G[V_i]$ is a robust $(\nu,\tau)$-outexpander with $\delta(G[V_i])\geq (1+\frac{1}{k+1}+\frac{\zeta}{2})|V_i|$.
\end{itemize}
Lemma~\ref{prop:shortConnectgen} implies that $G[V_i]$ contains a copy of $C$, as desired.
\smallskip

\noindent \tbf{Case 3: $|C|> \ceil {n/k}$ and $C$ is not a directed cycle.} 
Randomly select a subset $X$ of $V(G)$ of size $|C|$. Chernoff's bound for the hypergeometric distribution implies that, with high probability,
$\delta (G[X]) \geq (1+\gamma /2)|X|$. Theorem~\ref{thm:main} implies that $G[X]$ contains a copy of $C$.

\smallskip

We now prove the moreover part of the theorem.
Given $n \geq 2$, consider an $n$-vertex digraph $G$ with $\delta(G)\geq \floor{\frac{3n}{2}}-1$. As before,
let $G^*$ be the (undirected) graph on $V(G)$ where $x$ and $y$ are adjacent in $G^*$ precisely if both $xy$ and $yx$ are edges in $G$. Note that $\delta (G^*) \geq \floor{\frac{n}{2}}$ and thus~\cite[Theorem 2.3]{aldred} implies that (a) $G^*$ is pancyclic (i.e., contains a cycle of every possible length) or (b) $G^*$ satisfies one of the following conditions:
\begin{itemize}
    \item[(i)] $G^*$ is  the union of two complete graphs  $K_{(n+1)/2}$ that share a single vertex;
    \item[(ii)] there is a partition $A,B$ of $V(G)$ such that $|A|=(n+1)/2$, $A$ is an independent set in $G^*$ and there are all possible edges between $A$ and $B$ in $G^*$;
    \item[(iii)] $G^*$ is a copy of the complete bipartite graph with $n/2$ vertices in each of its vertex classes $A$ and $B$;
    \item[(iv)] $G^*$ is a copy of the cycle $C_5$ on $5$ vertices.
\end{itemize}
If (a) holds then clearly $G$ contains  every oriented cycle of every possible length. If (ii) or (iii) hold then as $\delta(G)\geq \floor{\frac{3n}{2}}-1$, there is an edge in $G[A]$; this edge, together with the structure obtained from $G^*$, ensures $G$ contains  every oriented cycle of every possible length.
Similarly, if (i) or (iv) hold then there is an edge present in $G$ that is not a double edge. One can then easily use this edge with the structure from $G^*$ to show that $G$ contains  every oriented cycle of every possible length.
\end{proof}

\end{document}